\newcommand{\N}{\ensuremath{\mathbb N}}
\newcommand{\Z}{\ensuremath{\mathbb Z}}
\newcommand{\R}{\ensuremath{\mathbb R}}
\newcommand{\xbar}{\overline{x}}
\newcommand{\ybar}{\overline{y}}
\newcommand{\dd}{d^{\circ}}
\newcommand{\vast}{\bBigg@{4}}
\newcommand{\Vast}{\bBigg@{5}}
\theoremstyle{plain}		
	\newtheorem{theorem}{Theorem}[section]
	\newtheorem{prop}[theorem]{Proposition}
	\newtheorem{cor}[theorem]{Corollary}
     \newtheorem{lemma}[theorem]{Lemma}
	\newtheorem{definition}[theorem]{Definition}
\theoremstyle{remark}		
	\newtheorem*{remark}{Remark}
\numberwithin{equation}{section}
\begin{document}
\title{Subconvexity for the Rankin-Selberg L-function in both levels}
\author{P. Edward Herman} 
\address{University of Chicago,Dept. of Mathematics,
5734 S. University Avenue,
Chicago, Illinois 60637}
\email{peherman@math.uchicago.edu}

\begin{abstract}
  In this paper, we obtain a subconvexity result for the Rankin-Selberg L-function in both levels. The new feature in this result is applying an amplification method of Duke-Friedlander-Iwaniec to a double Petersson-Kuznetsov trace formula. 
  As the trace formula ranges over all the $GL_2$ spectrum, this subconvexity result is unconditional of which Hecke eigenforms are chosen in the Rankin-Selberg L-function.    \end{abstract}

\maketitle
\section{Introduction}

We apply techniques gained from a beyond endoscopy approach to the Rankin-Selberg L-function in \cite{H} to understanding subconvexity. In \cite{H}, we studied an average of Rankin-Selberg L-functions $L(f \times h,s).$ More specifically, for a  smooth compactly supported test function $g,$ we investigated $$ \sum_{n}  g(\frac{n}{X}) \sum_h \sum_f a_{n}(f)a_n (h)$$ as $X$ gets large, with $a_{n}(f),a_{n}(g)$ being weighted Fourier coefficients associated to holomorphic or Maass Hecke eigenforms $f,h.$ On applying Mellin inversion to this formula, we are studying averages of Rankin-Selberg L-functions. Understanding this sum is reminiscent to understanding an approximate functional equation for the Rankin-Selberg L-function. However the important difference, which is explained in more detail below, is that the parameter $X$ above has no apparent connection to the essential data (level, weight,..) of the L-function, while in the approximate function equation the parameter(s) which one investigates in subconvexity problems are intrinsically associated to $X.$ For example, the sum $$ \sum_{n}\frac{1}{\sqrt{n}}  g(\frac{n}{pq}) \sum_h \sum_f a_{n}(f)a_n (h)$$ over forms $f,h$ of level $p,q,$ respectively, is
roughly the approximate functional equation for the Rankin-Selberg L-function. In an approach to breaking convexity in both levels, one would naively consider this sum as $p,q \to \infty.$

Given this connection of beyond endoscopy and the approximate functional equation, we ask if we can learn anything more about subconvexity for the Rankin-Selberg L-function. The clear connection of the two above sums hints we should study subconvexity as the two levels of the associated automorphic forms of the L-function go to infinity. 

This question of subconvexity for the Rankin-Selberg L-function in the level aspect has been studied in \cite{DFI}, \cite{KMV},and \cite{MV}. However, in these cases, one of the levels remains fixed as the other goes to infinity. Recently in \cite{HM} the first case where both levels vary was announced. We state their result. Let $M$ be a positive square-free integer and $P$ a prime with $(M,P)=1,$ $P \sim M^{\eta}$ and $0<\eta< \frac{2}{21}.$ Then for two holomorphic newforms $f,h$ we have the bound $$L(f \times h, 1/2) \ll (MP)^{1/2+\epsilon}\left( \frac{1}{(MP)^{\frac{\eta}{2(1+\eta)}}}+\frac{1}{(MP)^{\frac{2-21\eta}{64(1+\eta)}}} \right).$$   

We make no such restrictions in this paper and associate the subconvexity of the Rankin-Selberg L-function in both levels to subconvexity bounds of Dirichlet L-functions. Our result is:

\begin{theorem}\label{theo}
Let $f,h$ be holomorphic or Maass Hecke eigenforms associated to $GL_2$ automorphic representations of prime level $p,q,$ with $p\neq q$ with 
nontrivial central characters $\chi,\psi,$ respectively. Then for any $\epsilon>0$ and a fixed but arbitrarily small $\delta>0$ as $p,q \to \infty,$  we have $$L(f \times h, 1/2) \ll (pq)^{\epsilon}\bigg((pq)^{\frac{3\theta}{2}-1/8}+(pq)^{3/8+\frac{\theta}{4}}+p^{3/8+\frac{\theta}{4}}q^{\frac{\theta}{4}-1/8}+q^{3/8+\frac{\theta}{4}}p^{\frac{\theta}{4}-1/8} +  (1+q^{\delta})(pq)^{\frac{3}{8}+\frac{\theta}{4}}\bigg).$$ 
  Here $\theta>0$ comes from a (sub)convexity bound for the Dirichlet L-functions $L(\chi,1/2)\ll p^{ \theta+\epsilon}$ and  $L(\psi,1/2)\ll q^{ \theta+\epsilon}.$ 
\end{theorem}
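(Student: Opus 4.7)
The plan is to begin from the approximate functional equation for $L(f \times h, 1/2)$, so that the central value is essentially represented by
\[
L(f \times h, 1/2) \approx \sum_{n} \frac{a_n(f)\, a_n(h)}{\sqrt{n}}\, V\!\left(\frac{n}{pq}\right),
\]
with $V$ a fixed smooth cutoff. Following the Duke-Friedlander-Iwaniec strategy, I would then introduce an amplifier $A(f,h)$ attached to a short sum of Hecke eigenvalues at small primes, designed so that $|A(f,h)|^2$ is large at the fixed pair and nonnegative in general. Squaring $L(f\times h,1/2)$, multiplying by $|A|^2$, and summing over a spectral family of newforms of levels $p$ and $q$ via the double Petersson-Kuznetsov formula, I can drop every term but the distinguished one by positivity. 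This reduces the subconvexity question to bounding an amplified fourth moment over the whole $GL_2$ spectrum at both levels simultaneously.

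Next I would invoke the double Kuznetsov formula, once in the $f$-variable and once in the $h$-variable. The diagonal contribution gives the main term, which is comparable to the first moment times the size of the amplifier. The off-diagonal contribution opens into a double Kloosterman sum $\sum_{c_1, c_2}\sum_{m,n} \frac{S_{\chi}(m,n;c_1 p)\, S_{\psi}(m,n;c_2 q)}{c_1 c_2 pq}\,\Phi\!\left(\frac{mn}{c_1c_2pq}\right)$ against a product of Bessel transforms. Here the central characters $\chi, \psi$ twist the Kloosterman sums, and this is precisely the point at which Dirichlet L-values intervene: after an application of Voronoi summation (or Poisson in the $n$ and $m$ variables modulo $p,q$) together with the standard evaluation of Gauss sums associated to $\chi$ and $\psi$, the resulting moduli rearrangement yields residual factors of $L(\chi, 1/2)$ and $L(\psi,1/2)$, bounded by $p^{\theta+\epsilon}$ and $q^{\theta+\epsilon}$ respectively.

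I expect the main obstacle to be the careful treatment of the off-diagonal Kloosterman contribution in the double trace formula. Because the amplifier lives on two independent levels, the oscillations in the Kloosterman sums must be balanced against the archimedean weights in the Bessel transforms; the Weil bound alone is not enough, and one needs additional cancellation from summing over the moduli $c_1,c_2$ and from the smooth weights. The different regimes in which the moduli $c_1,c_2$ are large or small relative to the transition ranges of the Bessel transforms produce the various terms in the stated bound: the principal term $(pq)^{3\theta/2-1/8}$ comes from the balanced regime, the asymmetric terms $p^{3/8+\theta/4}q^{\theta/4-1/8}$ and its companion reflect regimes in which one of the two levels dominates the other in the Kloosterman modulus, and the $q^\delta$-loss in the last term is the expected cost of an auxiliary shifted convolution estimate that breaks down on a thin range.

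Finally, optimizing the length of the amplifier against the aggregate off-diagonal bound and inserting the assumed bound $L(\chi,1/2)\ll p^{\theta+\epsilon}$, $L(\psi,1/2)\ll q^{\theta+\epsilon}$ recovers the exponents asserted in Theorem~\ref{theo}. The hardest technical input will be ensuring that the amplifier's effective length is compatible with the double trace formula at both levels $p$ and $q$ simultaneously, since any asymmetry in the treatment of $p$ versus $q$ propagates into the final exponent and would destroy the subconvexity gain.
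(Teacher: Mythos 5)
Your overall strategy---amplification, an approximate functional equation, a double Petersson--Kuznetsov formula, positivity, and subconvexity for Dirichlet $L$-functions---matches the paper's framework, but the mechanism you propose for producing the Dirichlet $L$-values and the final exponents diverges from what the paper actually does in ways that would block the argument.

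First, you describe the amplifier as living ``on two independent levels,'' but the paper only amplifies over the level-$p$ spectrum: the object is $\sum_{f}h(V,t_f)\bigl|\sum_{l\le L}x_l\lambda_f(l)\bigr|^2\sum_{h}h(W,t_h)|L(1/2,f\times h)|^2$. This asymmetry is the very reason the final bound contains the $(1+q^\delta)$ factor (the paper remarks that amplifying in $h$ as well would presumably symmetrize the estimate). So if you genuinely ran the argument with a two-sided amplifier you would not reproduce these exponents and would have to redo the optimization.

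Second, and more seriously, you propose to obtain $L(\chi,1/2)$ and $L(\psi,1/2)$ by applying Voronoi summation and evaluating the resulting Gauss sums. The paper explicitly considers and rejects a Voronoi route: after Voronoi one must do delicate arithmetic and Bessel analysis separately for holomorphic, Maass and Eisenstein $h$, and the unconditionality over the full $GL_2$ spectrum is lost. What the paper does instead is keep the full double Kloosterman--Kloosterman term, set $n=c_2x+c_1y-kc_1c_2$, and use the bijection $X(c_1,c_2,n)\to Y(c_1,c_2,n)$ (from Herman's earlier beyond-endoscopy paper) to replace $\overline{x}\ (c_1)$, $\overline{y}\ (c_2)$ by a single class $r\ (n)$. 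After M\"obius and Mellin inversions this produces the quotient $\frac{L(\chi,s_2)L(\psi,s_1)}{L(\chi\psi,s_1+s_2)}$ and the quadratic-twist quotients $\frac{L(\cdot,(\frac{\Delta}{\cdot})\chi)}{L(\cdot,\chi)}$, $\frac{L(\cdot,\overline\psi)}{L(\cdot,(\frac{\Delta}{\cdot})\overline\psi)}$. The subconvexity input $L(1/2+it,\chi)\ll (|t|p)^{\theta+\epsilon}$ is then applied on a critical line after contour shifting, not from a direct Gauss-sum evaluation. This reparametrization is the load-bearing technical idea, and your proposal does not contain it.

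Third, the breakdown of terms you attribute to ``regimes where $c_1,c_2$ are large or small relative to the Bessel transition range'' is not the source of the four pieces in the theorem. They come, respectively, from: the $n=0$ Kloosterman--Kloosterman case (Gauss sums with $\chi\psi$), the delta-delta and mixed delta-Kloosterman terms, and the $n\ne 0$ Kloosterman--Kloosterman term after Poisson summation in $m$ modulo $baedn$---which turns the inner sum into a count $\nu_{\dd}(baedn,m,\cdot)$ of solutions to a quadratic congruence, exactly as in Rudnick's thesis. The Bessel functions do appear, but only ``late,'' from rewriting the archimedean $h$-integral $\int_0^\infty e(A/h+Bh)h^{w-1}\,dh$ in terms of $J_{\pm w}(4\pi\sqrt{A/B})$, and they are handled by standard asymptotics; they are not the organizing principle for the term structure. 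Finally, the $q^\delta$ loss is not from a shifted-convolution estimate on a thin range: it is the cost of the remainder term after a single contour shift in $s_2$ producing a residue (main term) plus a nearby integral (remainder), which the paper bounds the same way as the main term up to a $q^\delta$.

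In short, the skeleton is right, but the crucial intermediate machinery---the single-sided amplifier, the $X\to Y$ reparametrization of the double Kloosterman sum, the Poisson-in-$m$/quadratic-congruence step \`a la Rudnick, and the contour-shift decomposition producing the $q^\delta$---is absent, and the Voronoi-plus-Gauss-sums substitute you propose is the route the paper specifically avoids because it does not extend uniformly over the whole spectrum.
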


\begin{remark}

 The convexity bound for the Rankin-Selberg L-function in the level aspect is $$L(f \times h, 1/2) \ll (pq)^{\frac{1}{2}+\epsilon}.$$
 
\end{remark}

\begin{cor}
If $\chi$ and $\psi$ are real quadratic characters, then we can apply Theorem \ref{theo} with $\theta=\frac{1}{6}$ using the result of \cite{CI} to get the bound $$L(f \times h, 1/2) \ll (pq)^{\frac{10}{24}+\epsilon}.$$ If one of the characters is not real then we can apply the bound $\theta=\frac{3}{16}$ of \cite{B} to get $$L(f \times h, 1/2) \ll (pq)^{\frac{27}{64}+\epsilon}.$$
\end{cor}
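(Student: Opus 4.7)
The plan is purely arithmetic: substitute the two values of $\theta$ supplied by the cited works into the five-term bound of Theorem \ref{theo}, identify the dominating term in each case, and absorb the $q^{\delta}$ factor into $(pq)^{\epsilon}$ by taking $\delta<\epsilon$. No new analytic input is required, since \cite{CI} (for real characters) and \cite{B} (in general) already supply the subconvexity exponents $\theta=1/6$ and $\theta=3/16$ for the Dirichlet $L$-functions $L(\chi,1/2)$ and $L(\psi,1/2)$.

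First, for $\theta=1/6$, I would compute the three distinct exponents appearing in Theorem \ref{theo}:
$$\tfrac{3\theta}{2}-\tfrac{1}{8}=\tfrac{1}{8},\qquad \tfrac{3}{8}+\tfrac{\theta}{4}=\tfrac{10}{24},\qquad \tfrac{\theta}{4}-\tfrac{1}{8}=-\tfrac{1}{12}.$$
The first summand then becomes $(pq)^{1/8}$; the second and fifth contribute $(pq)^{10/24}$ (up to the factor $1+q^{\delta}$); and the third and fourth are $p^{10/24}q^{-1/12}$ and $q^{10/24}p^{-1/12}$. Since $1/8<10/24$, and since each asymmetric term is majorized by $(pq)^{10/24}$ (as $q\geq 2$ forces $q^{-1/12}\leq q^{10/24}$, and likewise for $p$), the dominant contribution is $(pq)^{10/24}$. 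Choosing $\delta<\epsilon$ absorbs $q^{\delta}\leq(pq)^{\delta}$, giving the claimed bound $(pq)^{10/24+\epsilon}$.

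Next, for $\theta=3/16$ the same substitution yields
$$\tfrac{3\theta}{2}-\tfrac{1}{8}=\tfrac{5}{32},\qquad \tfrac{3}{8}+\tfrac{\theta}{4}=\tfrac{27}{64},\qquad \tfrac{\theta}{4}-\tfrac{1}{8}=-\tfrac{5}{64}.$$
Since $5/32=10/64<27/64$, the identical argument shows the dominant contribution is $(pq)^{27/64}$, producing the bound $(pq)^{27/64+\epsilon}$.

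The ``main obstacle'' is really only bookkeeping: verifying termwise that the two asymmetric summands, which carry a negative exponent on one variable, do not produce a worse bound when $p$ and $q$ have very different sizes. This is immediate, since for $a,b>0$ and $q\geq 1$ we have $q^{-b}\leq 1\leq q^{a}$, hence $p^{a}q^{-b}\leq(pq)^{a}$. With this observation, and the trivial absorption of $q^{\delta}$ into $(pq)^{\epsilon}$, the corollary follows immediately from Theorem \ref{theo}.
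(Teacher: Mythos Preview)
Your proposal is correct and matches the paper's approach: the paper states the corollary without proof, treating it as an immediate consequence of substituting the cited values of $\theta$ into Theorem~\ref{theo}, which is exactly the term-by-term comparison you carry out.
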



\section{Details of Paper}

Inspired by \cite{DFI} and \cite{KMV}, we apply the amplification method. Ignoring convergence issues for now, we study \begin{equation}\label{eq:beg00}\sum_{f\in H(p,\chi)}|\sum_{l \leq L} x_l \lambda_f(l)|^2 \sum_{h \in H(q,\psi) } |L(f \times h,1/2)|^2,\end{equation} where $x_l$ are arbitrary complex coefficients which we optimize to ``amplify" a single Rankin-Selberg L-function. Using an approximate functional equation and then a double Kuznetsov trace formula similar to \cite{H}, we bound \eqref{eq:beg00} in terms of a function $F$ of $p,q,$ and $L.$ In order to bound just a single L-function, we need positivity to get to an estimate $$|\sum_{l \leq L} x_l \lambda_f(l)|^2 \sum_{h \in H(q,\psi) } |L(f \times h,1/2)|^2 \ll F(p,q,x_l,L).$$ Then choosing the $x_l$ to make $|\sum_{l \leq L} x_l \lambda_f(l)|^2$ as large as possible ensures that $$L(f \times h,1/2) \ll \left(\frac{F(p,q,x_l,L)}{|\sum_{l \leq L} x_l \lambda_f(l)|}\right)^{1/2}$$ is as small as possible. 

To the details of actually getting the bound $F(p,q,x_l,L)$ after applying the double Kuznetsov formula, we open up the Kloosterman sums and reorganize the $c_1,c_2$-sums and associated exponential sums to get terms more amenable to estimation similar to \cite{H}. 

One then has four sums needing estimation. Two come from the geometric sides of the trace formulas (labelled $c_1,c_2$), and the two sums from opening up $|L(f \times h,s)|^2$ with the approximate functional equation. Performing Poisson summation and Mellin inversion at various steps reduces the estimation to $GL_1$ Dirichlet L-functions. 


Another feature in the calculation is that one sees a shadow of the Selberg trace formula. 
We motivate this connection with a short heuristic. First, we denote by $H(p,\chi)$ the automorphic forms with level $p$ and central character $\chi.$ Ignoring test functions and convergence issues, the second moment calculation essential to this 
paper is: \begin{multline}\label{eq:appp}\sum_{f\in H(p,1),h \in H(q,1) }|L(1/2,f \times h)|^2\approx \\ \sum_{f\in H
(p,1),h \in H(q,1) } \sum_{m,n \geq 1} \frac{1}{(mn)^{1/2}} g(\frac{n}{pq})g'(\frac{m}{pq}) a_f(n)
a_f(m)a_h(n)a_h(m)\approx \\ \left(\frac{6\sqrt{pq}}{\pi^2} \int_{-\infty}^{\infty} \frac{g(t)}{t^{1/2}}dt  
\right) \sum_{m\geq 1} \frac{1}{m^{1/2}}g'(\frac{m}{pq})\left(\sum_{F \in H((p,q),1)} |a_F(m)|^2\right).\end{multline}

The second approximation is just using classical Rankin-Selberg theory on the $n$-sum, or one can also apply an analogous argument of \cite{H}. Another application of Rankin-Selberg theory to the $m$-sum gives as a main term for \eqref{eq:appp}:  \begin{equation}\label{eq:fina} \frac{6pq}{\pi^2} \widetilde{g}(1/2)\widetilde{g'}(1/2) \left(\sum_{F \in H((q,D),1)} Res_{s=1}L(1,F \times \overline{F}) \right)\end{equation}  with $\widetilde{g}$ denoting the Mellin transform of $g.$ But this term $Res_{s=1}L(1,F \times \overline{F})$ cancels the orthonormalization built into the Kuznetsov trace formula. This final formula is an unweighted trace formula, or a Selberg trace formula. Though in this paper we take nontrivial central characters $\chi \mod(p),\psi \mod(q),$ something similar to this phenomenon is still seen.  Fortunately recognizing the geometric side of the Selberg trace formula, in this fashion, has been encountered and studied by Rudnick in his thesis \cite{R}. There also seems to be some connection of interactions of the geometric sides of the Kuznetsov and Selberg trace formula in the analysis in \cite{SY}, which counts the number of closed geodesics up to a large parameter $X.$

Next, though we explicitly avoided Bessel functions in our initial Kuznetsov trace formulas--in estimating the archimedean integrals that occur in our estimation--we do see Bessel functions. However, they display themselves ``late" in the calculation, and we use standard asymptotics to deal with them. We say ``late" in the sense that one could approach the above kind of subconvexity problem in Theorem \ref{theo} using one Kuznetsov trace formula (for the $f$-sum) and one Voronoi summation for a specific automorphic form $h,$ very similar to \cite{KMV}. In the early stages of such a calculation (specifically after Voronoi summation), one encounters difficult arithmetic and Bessel function analysis. Another difficulty is that one would like to understand subconvexity in both levels for all $GL_2$ forms, so one would need a different Voronoi summation for each type of automorphic form $h$ (holomorphic, Maass, and Eisenstein). Doing a double Kuznetsov trace formula simplifies this analysis. In another paper \cite{H1}, we prove that the Kuznetsov (Petersson) trace formula implies Voronoi summation, so no information is lost in the approach of the current paper.  

Finally, in order to ensure positivity of \begin{equation}\label{eq:beg88}\sum_{f\in H(p,\chi)}h(V,t_f)|\sum_{l \leq L} x_l \lambda_f(l)|^2 \sum_{h \in H(q,\psi) }h(W,t_h) |L(1/2,f \times h)|^2,\end{equation}--where now we include the test functions that ensure convergence (as compared to \eqref{eq:beg00})--in section \ref{isomo} we discuss how we can choose $V,W$ such that the test functions $h(W,t_h),h(V,t_f)$ are ``large" for automorphic forms with eigenvalues (or weights) $\{t_{f^{\circ}}, t_{h^{\circ}}\}$ and arbitrarily small at any other spectral values.

\subsection{The Kloosterman-Kloosterman term}
When we apply the trace formula to both spectral sums, we encounter four different terms: the delta-delta term, the delta-Kloosterman term, Kloosterman-delta term, and the Kloosterman-Kloosterman term. The first three are relatively straightforward to deal with. However, the Kloosterman-Kloosterman term is much more difficult, and we explain the steps in making an estimate on it.

This term comes from after applying the Kuznetsov trace formula twice and is seen in the total geometric side of the trace formula in \eqref{eq:poisson}, \begin{multline}\label{eq:mmain}\sqrt{pq} \sum_{m\geq 1} \frac{1}{m^{1/2}}F'_{M'}(\frac{\dd m}{pq}) \sum_{\substack{c_1\equiv 0(p)\\c_2\equiv 0(q)}} \frac{1}{c_1 c_2}\sum_{x  (c_1)^{*}}
 \sum_{y  (c_2)^{*}} \chi(\overline{x})\psi(\overline{y})e(\frac{\overline{x}\dd m}{c_1}+\frac{\overline{y}\frac{l_1 l_2m}{\dd k^2}}{c_2}) \times \\ \bigg\{
 \sum_k
 \int_{-\infty}^{\infty} e(pqt(\frac{c_2 x + c_1 y -kc_1c_2}{c_1 c_2}))
 \frac{F_{M}(t)}{t^{1/2}}
 V(\frac{4\pi
\sqrt{tpq\dd m}}{c_1})  W(\frac{4\pi \sqrt{tpq\frac{l_1 l_2m}{\dd k^2}}}{c_2})dt.\end{multline}
The functions $F_M,F'_{M'}$ come from a partition of unity of the approximate functional equation of the Rankin-Selberg function. The functions $W,V$ come from our choice of test functions in the trace formula. Trivially \eqref{eq:mmain} has a bound of $(pq)^2,$ we will sketch how to get the bound  $\frac{p^{\theta_1+\epsilon}q^{\theta_2+\epsilon}}{\sqrt{pq}},$ where $\theta_1,\theta_2 \leq 1/4.$

Fix the $m$-sum for now and look at the $c_1,c_2,$ and $k$-sums. We will now rearrange terms following \cite{H}. Naively, the idea is for each integer $n$ finding all $c_1,c_2,x,y,k$ such that $n=c_2 x + c_1 y -kc_1c_2.$ Rearranging the sums in this way, we can write $\overline{x} (c_1)$ and $\overline{y}(c_2)$ in terms of a paramter $r(n).$ Specifically, for $x(c_1)$ we have $\overline{x}=\frac{c_1 r+c_2}{n}(c_1).$ The advantage of this is that the difficult Kloosterman sums are essentially removed. Now again fixing $m,$ we are looking at the sum \begin{multline}\label{eq:kk08}  \sqrt{pq} \sum_{n \in \Z} \sum_{r(n)} e(\frac{d^{\circ}mr+\frac{l_1 l_2m}{d^{\circ}k^2}\overline{r}}{n})\overline{\chi(n)\psi(n)}\sum_{d|n}  \sum_{\substack{c_1\equiv 0(p)\\ (c_1,n/d)=1}} \sum_{ \substack{ c_2 = -(c_1\,r + \lambda\frac{n}d), c_2\equiv 0(q)\\(\lambda,c_1d)=1 }} \chi(dc_2)\psi(dc_1)H_n(\frac{dc_1}{pq},
\frac{dc_2}{pq}),\end{multline} which is similar to our \eqref{eq:kk}. Here $H_n$ is nice test function defined in terms of the test functions $V,W$ in \eqref{eq:defhn}. Denote the Mellin transform of $H_n$ by $\widetilde{H_n}.$ Then we can write \eqref{eq:kk08} using Mobius inversion as \begin{multline}\label{eq:blub}\frac{\sqrt{pq}}{(pq)^2} \sum_{n \in \Z} \sum_{r(n)} e(\frac{d^{\circ}mr+\frac{l_1 l_2m}{{\dd}^{\circ}k^2}\overline{r}}{n})\overline{\chi(n)\psi(n)}\sum_{d|n}  \sum_{\substack{c_1\equiv 0(p)\\ (c_1,n/d)=1}}\psi(dc_1) \sum_{f|c_1} \mu(f)\sum_{( \lambda,c_1d)=1 }
 \chi(-f\lambda qn)\times  \\ \int_{({s_1})=10} \int_{({s_2})=10} \widetilde{H_n}(s_1,s_2) (\frac{pq}{dc_1})^{s_1}   (\frac{pq}{dqf(-c_1r+\frac{\lambda n}{d})})^{s_2}ds_1ds_2.\end{multline} The first step in our estimation is that we can truncate the $n$-sum with a negligible error (Lemma \ref{ncc}) and get the following proposition 

\begin{prop}[Step one of estimate]\label{bigskew}
Let $p,q$ be distinct primes and $\delta,\theta>0$ be small but fixed. Let $H_n(x)$ be defined as in \eqref{eq:defhn}. We have \begin{multline} \label{eq:pp188}\frac{\sqrt{pq}}{(pq)^2} \sum_{\substack{n\neq 0\\ n \ll (pq)^\theta L^2}} \sum_{r(n)} e(\frac{d^{\circ}mr+\frac{l_1 l_2m}{{\dd}^{\circ}k^2}\overline{r}}{n})\overline{\chi(n)\psi(n)}\sum_{d|n}  \sum_{\substack{c_1\equiv 0(p)\\ (c_1,n/d)=1}}\psi(dc_1) \sum_{f|c_1} \mu(f)\sum_{( \lambda,c_1d)=1 }
 \chi(-f\lambda qn) \\ \int_{({s_1})=10} \int_{({s_2})=10} \widetilde{H_n}(s_1,s_2) (\frac{pq}{dc_1})^{s_1}   (\frac{pq}{dqf(-c_1r+\frac{\lambda n}{d})})^{s_2}ds_1ds_2=\\ \frac{\psi(p)\chi(-q)\sqrt{pq}}{(pq)^2}(\frac{1}{2\pi i})^3\int_{(s_1)=2} \int_{(s_2)=1/2+\delta}    \int_{(w)=1/2}   \frac{\Gamma(w)\Gamma(s_2-w)}{\Gamma(s_2)} \frac{(pq)^{s_1+s_2}}{p^{s_1+s_2-w}q^{s_2}}\\ \frac{L(\chi,w)L(\psi,s_1+s_2-w)}{L(\chi\psi,s_1+2s_2-w)} \bigg[\sum_{e=1} \frac{\mu(e)\chi(e)}{e^{w+s_1+s_2}} \sum_{d=1} \frac{1}{d^{s_1+s_2}} \sum_{a=1} \frac{\mu(a)}{a^{s_1+s_2}} \sum_{b=1}  \frac{\mu^2(b)\chi(b)}{b^{s_1+2s_2}} \\ \sum_{\substack{n \in \Z \\ n\neq 0\\ baedn \ll (pq)^\theta L^2}} \frac{\widetilde{H_{baedn}}(s_1,s_2)}{n^w} \sum_{r(baedn)} \frac{1}{r^{s_2-w}} e(\frac{d^{\circ}mr+\frac{l_1 l_2m}{d^{\circ}k^2}\overline{r}}{baedn})\overline{\psi(n)} \bigg] dwds_1ds_2. \end{multline}
\end{prop}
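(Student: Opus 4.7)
The plan is to convert the left-hand side of \eqref{eq:pp188} into a triple Mellin--Barnes integral. First apply Lemma~\ref{ncc} to truncate the $n$-sum at $n \ll (pq)^\theta L^2$ with negligible error. The key analytic step is to separate $c_1$ from $\lambda n/d$ inside the factor $(-c_1 r + \lambda n/d)^{-s_2}$ using the beta-function identity
\[\frac{1}{(X+Y)^{s_2}} \;=\; \frac{1}{\Gamma(s_2)}\,\frac{1}{2\pi i}\int_{(w)} \Gamma(w)\Gamma(s_2 - w)\, X^{-w}\, Y^{-(s_2-w)}\, dw,\]
with $X = \lambda n/d$ and $Y = -c_1 r$ (signs tracked so that the contour may later be placed at $\mathrm{Re}(w) = 1/2$). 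This introduces the third contour and the gamma kernel $\Gamma(w)\Gamma(s_2-w)/\Gamma(s_2)$ appearing on the right and decouples $c_1$ from $\lambda, r, n$ in every remaining sum.

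Next I would evaluate the arithmetic sums one variable at a time. The $\lambda$-sum, now weighted by $\chi(\lambda)\lambda^{-w}$ under $(\lambda, c_1 d) = 1$, equals $L(\chi, w) \prod_{\ell \mid c_1 d}(1 - \chi(\ell)\ell^{-w})$, which supplies the $L(\chi, w)$ numerator. The $c_1$-sum, after writing $c_1 = pc_1'$ (forced by $p \mid c_1$) and observing that $\psi(c_1) = 0$ unless $(c_1, q) = 1$, yields the $\psi(p)$ prefactor, the normalization $(pq)^{s_1+s_2}/(p^{s_1+s_2-w}q^{s_2})$, and, in combination with $\sum_{f \mid c_1}\mu(f)\chi(f)$, a Dirichlet series whose Euler product is $L(\psi, s_1+s_2-w)$ times a local correction at primes dividing $n$. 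Combining the character identity $\chi(-f\lambda q n)\overline{\chi(n)} = \chi(-q)\chi(f)\chi(\lambda)$ with the extracted $\psi(p)$ produces the $\psi(p)\chi(-q)$ displayed in front of the right-hand side.

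The remaining local corrections at primes dividing $c_1$, $d$, and $n$ are stripped off by successive M\"obius inversions, which is precisely what introduces the auxiliary variables $a, b, e$ with weights $\mu(a)/a^{s_1+s_2}$, $\mu^2(b)\chi(b)/b^{s_1+2s_2}$, and $\mu(e)\chi(e)/e^{w+s_1+s_2}$; the combined Euler product of all the correction factors collected along the way is recognized as the reciprocal $L(\chi\psi, s_1+2s_2-w)^{-1}$. Re-indexing the free running integer in the $n$-sum as $baedn$ then converts the truncation into $baedn \ll (pq)^\theta L^2$ and upgrades $\widetilde{H_n}$ to $\widetilde{H_{baedn}}$, while the inner $r$-sum modulo the new $baedn$ retains the exponential factor from the left-hand side unchanged. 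The main obstacle is not any one computation but the interlocking bookkeeping: keeping the many coprimality constraints on $c_1, \lambda, f, d, n$ consistent, tracking signs in the Mellin--Barnes step so that the Kloosterman-type exponential survives in the displayed form, and justifying the shift of the $s_1, s_2, w$ contours to $\mathrm{Re}(s_1) = 2$, $\mathrm{Re}(s_2) = 1/2+\delta$, $\mathrm{Re}(w) = 1/2$ via absolute convergence of the $L$-series together with the rapid decay of $\widetilde{H_n}$ and the gamma factors in the imaginary direction.
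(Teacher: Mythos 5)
Your proposal takes essentially the same route as the paper: truncate the $n$-sum via Lemma~\ref{ncc}, apply the Mellin--Barnes beta identity (following Young) to decouple $c_1$ from $\lambda n/d$, detect the coprimality conditions by M\"obius inversion to produce the auxiliary $a,b,e$-sums, and recognize the resulting Euler products as $L(\chi,w)L(\psi,s_1+s_2-w)/L(\chi\psi,s_1+2s_2-w)$. One small point of bookkeeping: you treat the $\lambda$-constraint as $(\lambda,c_1 d)=1$ and write the local correction as $\prod_{\ell\mid c_1 d}(1-\chi(\ell)\ell^{-w})$, whereas the paper's proof (see \eqref{eq:ym1}) inverts only the condition $(\lambda,d)=1$ to get $\sum_{e\mid d}\mu(e)\chi(e)/e^w$ -- the $(c_1,c_2)=1$ coprimality having already been handled by the earlier $f$-inversion -- so the $c_1$-local factors you describe should not actually be extracted at this stage if you want to land exactly on the displayed $e$-sum.
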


We point out, similar to \cite{H}, the $r$-sum on the left hand side of \eqref{eq:pp188} is almost another new Kloosterman sum, the problem is that the inner $c_2$ depends on $r.$ Further, if there were no arithmetic conditions on the $c_1,c_2$-sums in \eqref{eq:kk08} then an easy application of Mellin inversion or Poisson summation would suffice to understand how big the Kloosterman-Kloosterman term is with respect to $p,q.$ However, we must face these arithmetic difficulties. Following \cite{Y}, we use the Mellin transform \begin{equation} \frac{1}{(1+x)^s}=\frac{1}{2\pi i} \int_{(w)} \frac{\Gamma(w)\Gamma(s-w)}{\Gamma(s)} x^{-w} dw \end{equation} to analytically ``separate" the $\lambda$- sum from the $c_1$-sum.

 Now we make a contour shift that passes a pole to give a main term and a remainder term from the right hand side of \eqref{eq:pp188}. This gives \eqref{eq:pp188} equaling  \begin{multline}\label{eq:ymr88} \frac{\sqrt{pq}  \psi(p)\chi(-q)}{(pq)^2}(\frac{1}{2\pi i})^2\int_{(s_1)=2} \int_{(s_2)=1/2}       \frac{(pq)^{s_1+s_2}}{p^{s_1}q^{s_2}}\\ \frac{L(\chi,s_2)L(\psi,s_1)}{L(\chi\psi,s_1+s_2)} \bigg[\sum_{e=1} \frac{\mu(e)\chi(e)}{e^{s_1+2s_2}} \sum_{d=1} \frac{1}{d^{s_1+s_2}} \sum_{a=1} \frac{\mu(a)}{a^{s_1+s_2}} \sum_{b=1}  \frac{\mu^2(b)\chi(b)}{b^{s_1+2s_2}} \\ \sum_{\substack{n \in \Z \\ n\neq 0\\ baedn \ll (pq)^\theta L^2}} \frac{\widetilde{H_{baedn}}(s_1,s_2)}{n^{s_2}} \sum_{r(baedn)} e(\frac{d^{\circ}mr+\frac{l_1 l_2m}{d^{\circ}k^2}\overline{r}}{baedn})\overline{\psi(n)} \bigg] ds_1ds_2+\{\text{Remainder term} \eqref{eq:yrem}\}. \end{multline}  In \eqref{eq:ymr88} we clearly see Dirichlet L-functions that will aid in getting subconvexity in the Rankin-Selberg L-function.
 
 Step two of the estimation will now bring in the $m$-sum from \eqref{eq:mmain} to the main term \eqref{eq:ymr88}. First we define for $\Re(s_1)=2,\Re(s_2)=1/2$ the function \begin{multline}\label{eq:ttt88} T_{m,v}(c):=e(\frac{-mv}{c}) \int_0^\infty \int_0^\infty e(\frac{x{\dd}^2 k v}
 {\sqrt{l_1l_2}cy}+\frac{y(l_1 l_2)^{3/2}v}{{\dd}^2 k^3cx}) \\ \int_{-\infty}^{\infty}
e(\frac{tc k }{\sqrt{l_1l_2}vxy})F_{M}(t)
 V(\frac{4\pi
\sqrt{t }}{x})  W(\frac{4\pi \sqrt{t}}{y})x^{s_1-1}y^{s_2-1}\frac{dt}{t^{1/2}}dxdy.\end{multline} This is virtually the function $\widetilde{H_{\mu}}(s_1,s_2)$ with an extra exponential sum and with a change of variables $x \to \frac{\sqrt{m\dd}x}{\sqrt{pq}}$ and $y \to \frac{\sqrt{m\frac{l_1l_2}{\dd k^2}}y}{\sqrt{pq}}.$

 \begin{prop}[Step two of estimate]\label{eq:ste22}
With $p,q$ distinct primes, let $\Delta=m^2-\frac{4l_1l_2}{k^2},m \in \Z$ and $(\frac{\Delta}{\cdot})$ the quadratic character with modulus $\Delta.$ We have for any $\epsilon >0$ and any $A>0$ the asymptotic equality 

 \begin{multline}\label{eq:steb}
\frac{\sqrt{pq}\psi(p)\chi(-q)}{(pq)^2}  \sum_{m\geq 1} \frac{1}{m^{1/2}}F'_{M'}(\frac{\dd m}{pq}) \{\text{Main term of \eqref{eq:ymr88}}\}=\\  \frac{\sqrt{pq}  \psi(p)\chi(-q)}{(pq)^2}
\sum_m  (\frac{1}{2\pi i})^2 \int_{(s_1)=2} \int_{(s_2)=1/2}   (\frac{l_1l_2}{k^2})^{\frac{s_1+s_2}{2}}\int_{(w)=\epsilon}   \int_{-\infty}^\infty F'_{M'}(\frac{\dd v}{pq}) \widetilde{T_{m,v}}(w) \frac{1}{(pq)^{\frac{s_1+s_2}{2}}} \frac{(pq)^{s_1+s_2}}{p^{s_1}q^{s_2}}\\ \frac{L(s_2,\chi)L(s_1,\psi)}{L(s_1+s_2,\chi\psi)} \frac{L(s_1+2s_2+1+w,(\frac{\Delta}{\cdot})\chi)}{L(s_1+2s_2+1+w,\chi)} \frac{L(s_2+1+w,\overline{\psi})}{L(s_2+1+w,(\frac{\Delta}{\cdot})\overline{\psi}))} B(s_1+2s_2+1+w) v^{\frac{s_1+s_2-1}{2}} d v dw ds_1ds_2\\ +O((pq)^{-A}). \end{multline} Here $B(s)$ is an analytic function for $\Re(s) \geq 1$ defined in \eqref{eq:bs}.
  \end{prop}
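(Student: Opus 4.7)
The plan is to substitute the main term of \eqref{eq:ymr88} into the $m$-sum appearing on the left side of \eqref{eq:steb}, and then perform three maneuvers: (i) a Mellin change of variables inside $\widetilde{H_{baedn}}(s_1,s_2)$ to produce $\widetilde{T_{m,v}}(w)$, (ii) an Estermann-type evaluation of the inner $r$-sum that extracts the quadratic character $(\frac{\Delta}{\cdot})$, and (iii) a Dirichlet series manipulation over $a,b,e,d,n$ that assembles the resulting Euler products into the $L$-function ratios displayed on the right side.

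For (i), I would expand $\widetilde{H_{baedn}}(s_1,s_2)$ using the definition in \eqref{eq:defhn} as a double Mellin integral of the archimedean factor in \eqref{eq:mmain}, then apply the substitutions $x\mapsto \sqrt{m\dd}\,x/\sqrt{pq}$ and $y\mapsto \sqrt{m l_1 l_2/(\dd k^2)}\,y/\sqrt{pq}$ so that the Bessel cutoffs $V(4\pi\sqrt{t}/x)$ and $W(4\pi\sqrt{t}/y)$ no longer depend on $m$ or $pq$. These substitutions produce the powers $(l_1l_2/k^2)^{(s_1+s_2)/2}$ and $(pq)^{-(s_1+s_2)/2}$ seen on the right side. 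The factor $F'_{M'}(\dd m/pq)m^{-1/2}$ multiplying the $m$-sum is then split off from $m$ by writing it against a continuous variable $v$; an auxiliary Mellin inversion in a new variable $w$ generates both the extra exponential $e(-mv/c)$ that characterizes $T_{m,v}$ and the factor $v^{(s_1+s_2-1)/2}$.

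For (ii), the inner sum $\sum_{r(baedn)} e\bigl(\frac{\dd m r + (l_1l_2 m/\dd k^2)\overline{r}}{baedn}\bigr)\overline{\psi(n)}$ is a twisted Kloosterman sum of modulus $baedn$ with parameters whose product is $m^2 l_1 l_2/k^2$. Summed against the remaining Dirichlet weight in $n$ coming from \eqref{eq:ymr88}, this becomes an Estermann-type zeta function; its factorization into Euler products yields precisely the quadratic character $\bigl(\frac{\Delta}{\cdot}\bigr)$ associated to the discriminant $\Delta=m^2-4l_1l_2/k^2$ of the underlying quadratic, producing the two ratios $L(s_1+2s_2+1+w,(\frac{\Delta}{\cdot})\chi)/L(s_1+2s_2+1+w,\chi)$ and $L(s_2+1+w,\overline{\psi})/L(s_2+1+w,(\frac{\Delta}{\cdot})\overline{\psi})$ after absorbing the $\chi$ and $\psi$ twists carried along from the earlier Mobius sum. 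Local factors at the ``bad'' primes dividing $\dd$, $k$, $p$, $q$, or $\Delta$ collect into the analytic correction $B(s)$ of \eqref{eq:bs}. For (iii), the outer Dirichlet sums in $b,a,e,d$, combined with the $\chi,\psi$ twists already present in \eqref{eq:ymr88}, reconstruct the previously-identified ratio $L(s_2,\chi)L(s_1,\psi)/L(s_1+s_2,\chi\psi)$. Finally, the truncation $baedn\ll (pq)^\theta L^2$ is removed by the rapid decay of $\widetilde{T_{m,v}}(w)$ away from its natural support, invoking the analogue of Lemma \ref{ncc}, at the cost of the stated $O((pq)^{-A})$ error.

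The main obstacle will be step (ii): carrying out the Estermann/Kloosterman evaluation so that the quadratic character of modulus $\Delta$ is cleanly isolated from the remaining local pieces, and then verifying that what is left over really defines an analytic function $B(s)$ on $\Re(s)\geq 1$. In particular, one must track the interaction of the Kloosterman sum with the Nebentypus characters $\chi,\psi$ and with the coprimality conditions $(\lambda,c_1 d)=1$ and $(c_1,n/d)=1$ still present in \eqref{eq:blub}, so that no unwanted poles appear in the target half-plane and nothing legitimately belonging to the main term gets absorbed into the $O((pq)^{-A})$ remainder.
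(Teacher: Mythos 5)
Your proposal misses the central operation of the paper's proof and misattributes the origin of several key quantities, so this is a genuine gap rather than an alternate route.

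The crucial step the paper takes --- and that you do not mention at all --- is \emph{Poisson summation in the $m$-variable modulo $baedn$}. After the Mellin change of variables, the $r$-sum $\sum_{r(baedn)} e\bigl(\frac{\dd m r + (l_1 l_2 m / \dd k^2)\overline r}{baedn}\bigr)$ is recognized as a Kloosterman sum $S(\dd m, l_1 l_2 m / (\dd k^2), baedn)$ in which \emph{both} arguments scale with $m$. This is exactly the Rudnick-thesis configuration: Poisson summation on the $m$-sum (against the smooth weight $F'_{M'}m^{-1/2}A_{s_1,s_2}(m)$) converts the sum of Kloosterman sums into the count $\nu_{\dd}(baedn,m,l_1 l_2/\dd k^2)$ of solutions to $\tfrac{l_1 l_2}{\dd k^2}r^2 - mr + \dd \equiv 0 \pmod{baedn}$ (see \eqref{eq:msum18}), and it is \emph{this} Poisson step that simultaneously produces the continuous $v$-integral, the kernel $e(-mv/baedn)$ sitting inside $T_{m,v}$, and --- via $\nu_{\dd}(\mathfrak p,m,a)=1+(\tfrac{m^2-4a\dd}{\mathfrak p})$ --- the quadratic character $(\tfrac{\Delta}{\cdot})$ of discriminant $\Delta=m^2-4l_1 l_2/k^2$. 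Your step (ii) instead proposes an Estermann-type evaluation of the $r$-sum combined with the Dirichlet weight in $n$. That does not match the structure here: in the main term of \eqref{eq:ymr88} the $r$-sum is unweighted (the $r^{-(s_2-w)}$ factor was already removed by the residue in Proposition \ref{bigskew}), so there is no Estermann-type Dirichlet series in $r$ to evaluate, and it is unclear how an Estermann device applied to the $n$-modulus rather than to a Dirichlet variable would isolate the character of the fixed discriminant $\Delta$.

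Your step (i) also misdescribes where the variable $w$ comes from. In the paper $w$ arises by Mellin-inverting $T_{m,v}(c)$ as a function of the modulus $c=baedn$, in order to pull $c$ out of the archimedean integral so the $e,d,a,b,n$-sums can be turned into Euler products; it does not ``generate'' the exponential $e(-mv/c)$ or the factor $v^{(s_1+s_2-1)/2}$, both of which are artifacts of Poisson summation. To repair the argument you need to: (a) write the $r$-sum as a Kloosterman sum; (b) perform Poisson summation on $m$ modulo $baedn$ and extract $\nu_{\dd}$; (c) use multiplicativity of $\nu_{\dd}$ and the Hensel/completing-the-square computation $\nu_{\dd}(\mathfrak p, m, a)=1+(\tfrac{\Delta}{\mathfrak p})$ to build the Euler products in $e,d,a,b,n$; (d) Mellin-invert $T_{m,v}$ in $c$ to introduce $w$ and separate the arithmetic sums; and (e) assemble the five Euler products (with the $a$- and $d$-sums canceling) into the displayed ratios of Dirichlet $L$-functions and the bounded factor $B(s)$ from the ``bad'' primes dividing $2q\Delta$.
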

  

This second estimate comes from the earlier reorganization where we replaced the sum $\overline{x}(c_1)$ with the sum $r(n).$ The replacement benefits us in that the $m$-sum coming from the left hand side of \eqref{eq:steb} is a sum of Kloosterman sums that is very similar to Rudinick's thesis \cite{R}: 

\begin{equation*}\label{eq:msum888}
 \sum_{m\geq 1} \frac{1}{m^{1/2}}F'_{M'}(\frac{\dd m}{pq}) S(\dd m, \frac{l_1l_2m}{\dd k^2},baedn) A(m),\end{equation*}
with $A(x)$ a nice function. This equation is analogous to our \eqref{eq:msum8}. Notice also this new Kloosterman sum has trivial nebentypus even though we initially started with forms of type $\chi(p), \psi(q).$ Applying Poisson summation modulo $baedn$ leaves us counting solutions to quadratic equations. Such a count is analogous to calculating orbital integrals in the Arthur-Selberg trace formula. The beautiful thing now, though it is not essential to the estimate, is that these solutions to quadratic equations modulo $l$ depend only the square free part of $l,$ and are also multiplicative in $l.$

 We will ultimately have the contour of the $s_1$-integral on the line $\Re(s_1)=1/2.$ This term is analogous to our \eqref{eq:res} below. Note from this term we will expect that convexity or subconvexity estimates will come into play for Dirichlet L-functions. We also make a comment that the $m$-sum in this case, which is dual to the $m$-sum from opening the norm square of the approximate functional equation, will play the role of the sum of traces of conjugacy classes in the Selberg trace formula, see \cite{R}.

To understand the third step we define $$A:=A(x,y,\dd,k,l_1,l_2):=\left(\frac{x{\dd}^2 k }
 {\sqrt{l_1l_2}y}+\frac{y(l_1 l_2)^{3/2}}{{\dd}^2 k^3x}-m\right)$$ and $$B:=B(x,y,k,l_1,l_2):=\frac{k}{\sqrt{l_1l_2}xy}.$$

Then we can rewrite the $v$-integral in \eqref{eq:steb} as  \begin{multline}\label{eq:vint0088}
(\frac{pq}{\dd})^{\frac{s_1+s_2+2w+1}{2}} \bigg[\int_0^\infty F_{M}(t)t^{\frac{s_1+s_2-1}{2}}dt\bigg] \left[\int_{-\infty}^\infty F'_{M'}(v)v^{\frac{s_1+s_2+2w-1}{2}} d v\right] \int_0^\infty \int_0^\infty  V(\frac{4\pi}{x})  W(\frac{4\pi}{y}) \\ (\frac{A}{B})^{w/2}\bigg[\frac{J_w(4\pi\sqrt{\frac{A}{B}})-J_{-w}(4\pi\sqrt{\frac{A}{B}})}{2\sin(\pi w/2)} +\frac{J_w(4\pi\sqrt{\frac{A}{B}})+J_{-w}(4\pi\sqrt{\frac{A}{B}})}{2\cos(\pi w/2)}\bigg]  x^{s_1-1}y^{s_2-1}dxdy.
\end{multline}
The steps taken to rewrite the $v$-integral in this form follow from equations \eqref{eq:vint} to \eqref{eq:res0}.

Let \begin{multline}\label{eq:dyo88}D_{\frac{A}{B}}(s_1,s_2,w):=\int_0^\infty \int_0^\infty  V(\frac{4\pi}{x})  W(\frac{4\pi}{y})\times  \\ (\frac{A}{B})^{w/2} \bigg[\frac{J_w(4\pi\sqrt{\frac{A}{B}})-J_{-w}(4\pi\sqrt{\frac{A}{B}})}{2\sin(\pi w/2)} +\frac{J_w(4\pi\sqrt{\frac{A}{B}})+J_{-w}(4\pi\sqrt{\frac{A}{B}})}{2\cos(\pi w/2)}\bigg]  x^{s_1-1}y^{s_2-1}dxdy\end{multline}

and \begin{equation}\label{eq:ccc} C(s_1,s_2,w):=\frac{L(s_2,\chi)L(s_1,\psi)}{L(s_1+s_2,\chi\psi)} \frac{L(s_1+2s_2+1+w,(\frac{\Delta}{\cdot})\chi)}{L(s_1+2s_2+1+w,\chi)} \frac{L(s_2+1+w,\overline{\psi})}{L(s_2+1+w,(\frac{\Delta}{\cdot})\overline{\psi}))} B(s_1+2s_2+1+w).\end{equation} 

Then the right hand side of step two equals  \begin{multline}\label{eq:res088} \frac{\psi(p)\chi(-q)}{pq}
\sum_m  (\frac{1}{2\pi i})^2 \int_{(s_1)=2} \int_{(s_2)=1/2}  \frac{(pq)^{s_1+s_2}}{p^{s_1}q^{s_2}}   (\frac{l_1l_2}{k^2})^{\frac{s_1+s_2}{2}}\times \\ \int_{(w)=\epsilon} (\frac{pq}{\dd})^w \bigg[\int_0^\infty F_{M}(t)t^{\frac{s_1+s_2-1}{2}}dt\bigg] \left[\int_{-\infty}^\infty F'_{M'}(v)v^{\frac{s_1+s_2+2w-1}{2}} d v\right] D_{\frac{A}{B}}(s_1,s_2,w)C(s_1,s_2,w) dw ds_1ds_2\\ +O((pq)^{-A}). \end{multline} 

From here a subconvexity estimate is needed for $C(s_1,s_2,w)$ and Bessel function analysis is required for $D_{\frac{A}{B}}(s_1,s_2,w).$ We also point out the interesting fact here that the functions $F_M,F'_{M'}$ which come from the approximate functional equation are ``unhinged" from the actual subconvexity calculation. This is good as one can always change the test functions involved in the approximate functional equation. We say though, that these integrals of $F_M$ and $F'_{M'}$ (via integration by parts) are important in ensuring the convergence of the $s_1,s_2$ and $w$-integrals.

\begin{prop}[Step three of estimate]\label{eq:hbes88}
For any $\epsilon >0$ and $A$ and $B$ defined as above, we have
\begin{itemize}
\item $$C(1/2+it_1,1/2+it_2,iy) \ll (|t_1|p)^{\theta_1} (|t_2|q)^{\theta_2} (|t_1t_2y|pq)^{\epsilon}$$ where $\theta_1,\theta_2$ are parameters $\leq 1/4.$

\item $$D_{\frac{A}{B}}(1/2+it_1,1/2+it_2,i\gamma)  \ll (t_1t_2\gamma \frac{A}{B})^{\epsilon}.$$
\end{itemize}

\end{prop}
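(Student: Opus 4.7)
The plan is to treat the two bullets separately, since they involve quite different techniques: the first is a straightforward assembly of standard $L$-function bounds, while the second requires uniform control of a Bessel-function integral.

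For the bound on $C(s_1,s_2,w)$ at $s_1=1/2+it_1,\,s_2=1/2+it_2,\,w=iy$, I would first observe that the shifted arguments $s_1+2s_2+1+w$ and $s_2+1+w$ have real parts $5/2$ and $3/2$, both comfortably inside the region of absolute convergence. There each $L$-function and each reciprocal $L$-function is $\ll (pq(|t_1|+|t_2|+|y|+2))^{\epsilon}$, and $B(s_1+2s_2+1+w)$ is bounded by hypothesis. The denominator factor $L(s_1+s_2,\chi\psi)$ sits at real part $1$, so the standard zero-free-region estimate yields $1/L(1+i(t_1+t_2),\chi\psi) \ll (\log(pq(|t_1|+|t_2|+2)))^{O(1)}$, absorbed into $(pq|t_1t_2|)^{\epsilon}$. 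The only nontrivial factors are then $L(1/2+it_1,\psi)$ and $L(1/2+it_2,\chi)$, and the hypothesized (sub)convexity bounds $L(1/2+it,\chi)\ll (|t|q)^{\theta_2+\epsilon}$ and $L(1/2+it,\psi)\ll (|t|p)^{\theta_1+\epsilon}$ combine to produce exactly the claimed estimate.

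For $D_{A/B}$, I would exploit the compact support of the test functions $V,W$: the change of variable $u=4\pi/x,\,v=4\pi/y$ localizes $x,y$ to a compact region bounded away from $0$ and $\infty$. On that region $x^{s_1-1}y^{s_2-1}$ at $\Re(s_1)=\Re(s_2)=1/2$ has modulus $(xy)^{-1/2}=O(1)$, and $(A/B)^{w/2}$ at $w=i\gamma$ has modulus $1$. For the bracketed Bessel combination, I would use $\overline{J_{i\gamma}(z)}=J_{-i\gamma}(z)$ for real $z$ together with $\sin(i\pi\gamma/2)=i\sinh(\pi\gamma/2)$ and $\cos(i\pi\gamma/2)=\cosh(\pi\gamma/2)$ to rewrite it as $\Im J_{i\gamma}(z)/\sinh(\pi\gamma/2)+\Re J_{i\gamma}(z)/\cosh(\pi\gamma/2)$, with $z=4\pi\sqrt{A/B}$. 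Standard asymptotics (power-series for small $z$, oscillatory expansion for large $z$, and Stirling for $\Gamma(1+i\gamma)$) show that the exponential growth $e^{\pi|\gamma|/2}$ coming from $1/\Gamma(1+i\gamma)$ is precisely cancelled by the denominator $\sinh(\pi\gamma/2)$ or $\cosh(\pi\gamma/2)$, leaving a polynomially bounded expression; in particular the entire bracket is $\ll (|\gamma|z)^{\epsilon}$, and integrating against the compactly supported weight gives the claimed bound.

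The main obstacle will be the Bessel analysis in the transition regime where $z=4\pi\sqrt{A/B}$ is of the same order as $|\gamma|$, since then neither the small-$z$ power series nor the large-$z$ oscillatory expansion applies cleanly and one must patch the two together. Because we are only aiming for the very weak estimate $O((A/B)^{\epsilon})$ rather than any power saving, however, a crude uniform majorization---for instance via the Mellin--Barnes integral representation of $J_{i\gamma}$, or via the Nicholson-type integral for $|J_{i\gamma}(z)|^2$---should suffice to push the bound through all three regimes without invoking delicate uniform asymptotics.
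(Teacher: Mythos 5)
Your argument for the $C$-bound matches the paper's proof essentially verbatim: the shifted $L$-factors live at $\Re=5/2$ and $\Re=3/2$ (where the paper's \eqref{eq:con} gives $(pq\,|t|)^{\pm\epsilon}$ upper and lower bounds), the denominator $L(s_1+s_2,\chi\psi)$ sits on $\Re=1$ and is handled by the same lower bound, $B$ is $\ll(\Delta pq)^{\epsilon}$ by the preceding discussion of \eqref{eq:bs}, and the two central $L$-values $L(1/2+it_2,\chi)$ (modulus $p$) and $L(1/2+it_1,\psi)$ (modulus $q$) give the $(\cdot)^{\theta_1},(\cdot)^{\theta_2}$ factors via (sub)convexity. (You wrote the moduli backwards, but so does the paper's statement, and it is immaterial to the argument.)

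For the $D_{A/B}$-bound your setup is also the paper's: the compact supports of $V,W$ confine $x,y$ to a bounded region where $x^{s_1-1}y^{s_2-1}$ and $(A/B)^{i\gamma/2}$ are $O(1)$, the bracketed Bessel combination is a bounded multiple of $J_{i\gamma}(z)/\sinh(\pi\gamma/2)$ (resp.\ $/\cosh$), and one needs to see that the $e^{\pi|\gamma|/2}$ growth of $J_{i\gamma}$ is cancelled by the hyperbolic denominators. The one place you and the paper diverge is exactly the point you flag as "the main obstacle," the transition regime $z\asymp|\gamma|$. The paper does not need a third patch: it treats the entire range $|\gamma|\geq cT$ (transition included) by the uniform Debye-type asymptotic \eqref{eq:bigarg} from Erd\'elyi, in which $z=\sqrt{T^2+\gamma^2}$ and the prefactor $e^{\pi\gamma}$ cancels against the denominator, yielding a $|\gamma|^{-1/2}$ bound; the complementary range $|\gamma|\leq T$ is handled by Watson's oscillatory expansion; and $w=0$ is a separate small case using $Y_0,J_0$. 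Your alternative of a crude uniform majorization via Nicholson's or a Mellin--Barnes representation would indeed also close this, and you are right that only an $\epsilon$-power bound is needed, but you stop short of carrying it out. Since the remaining step is exactly the one the paper's cited asymptotic is designed to supply, I would call this a correct proof sketch in the same spirit as the paper's, with the transition-regime estimate left to the reader rather than a genuine gap in approach.
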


We mention now the remainder term from \eqref{eq:ymr88} can be analyzed in the same manner as in steps two and three. We unfortunately do not get such a nice product of L-functions as in step two, but analogous analysis of that of the main term gets the same bound as the main term times an arbitrarily small but fixed power of $q.$ We write these two estimates as our final step four of the analysis of the Kloosterman-Kloosterman term.



\begin{prop}[Step four of estimate]\label{eq:fin88}
For a fixed but arbitrarily small $\delta>0,$ we have
\begin{itemize}
\item \begin{equation} \frac{\sqrt{pq}\psi(p)\chi(-q)}{(pq)^2}  \sum_{m\geq 1} \frac{1}{m^{1/2}}F'_{M'}(\frac{\dd m}{pq}) \{\text{Main term of \eqref{eq:ymr88}}\}\ll \frac{p^{\theta_1+\epsilon}q^{\theta_2+\epsilon}}{\sqrt{pq}};
\end{equation}

\item \begin{equation} \frac{ \sqrt{pq}\psi(p)\chi(-q)}{(pq)^2}  \sum_{m\geq 1} \frac{1}{m^{1/2}}F'_{M'}(\frac{\dd m}{pq}) \{\text{Remainder term of \eqref{eq:ymr88}}\}\ll \frac{p^{\theta_1+\epsilon}q^{\delta+\theta_2+\epsilon}}{\sqrt{pq}}; \end{equation}

\item \begin{equation} \{\text{Equation \eqref{eq:mmain}}\} \ll \frac{(1+q^{\delta})p^{\theta_1+\epsilon}q^{\theta_2+\epsilon}}{\sqrt{pq}}\end{equation}
\end{itemize}
where $\theta_1,\theta_2$ are parameters $\leq 1/4.$

\end{prop}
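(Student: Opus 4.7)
All three bullets reduce to inserting the integral representation of Step two (Proposition~\ref{eq:ste22}) into the pointwise bounds of Step three (Proposition~\ref{eq:hbes88}), after first shifting the $s_1$-contour from $\Re(s_1)=2$ down to $\Re(s_1)=1/2$ so that both $s_1$ and $s_2$ sit on the critical line. The key arithmetic observation that drives the $(pq)^{-1/2}$ in the stated bound is that $\frac{(pq)^{s_1+s_2}}{p^{s_1}q^{s_2}}=p^{s_2}q^{s_1}$ has modulus exactly $\sqrt{pq}$ on $\Re(s_1)=\Re(s_2)=1/2$, which, when combined with the prefactor $(pq)^{-1}$ appearing in \eqref{eq:res088}, leaves precisely $(pq)^{-1/2}$.

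To justify the contour shift I would check that no poles lie in the strip $1/2 \leq \Re(s_1) \leq 2$: the numerator factors $L(s_1,\psi)$ and $L(s_1+2s_2+1+w,(\frac{\Delta}{\cdot})\chi)$ inside $C(s_1,s_2,w)$ are entire since $\psi$ and $(\frac{\Delta}{\cdot})\chi$ are nontrivial primitive, while the denominator factors $L(s_1+s_2,\chi\psi)$, $L(s_1+2s_2+1+w,\chi)$ and $L(s_2+1+w,(\frac{\Delta}{\cdot})\overline{\psi})$ have arguments of real part $\geq 1$ throughout the deformation (using $\Re(s_2)=1/2$, $\Re(w)=\epsilon$), and so are nonvanishing by the standard zero-free region; hence no residues are collected. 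After the shift, Step three yields $|C(1/2+it_1,1/2+it_2,iy)| \ll p^{\theta_1}q^{\theta_2}(|t_1 t_2 y|pq)^{\epsilon}$ and $|D_{A/B}(1/2+it_1,1/2+it_2,iy)| \ll (|t_1 t_2 y|A/B)^{\epsilon}$. Convergence of the remaining $s_1, s_2, w$ integrals is guaranteed by repeated integration by parts in the Mellin transforms of the smooth compactly supported test functions $F_M, F'_{M'}$ (and of $T_{m,v}$ in $w$), which supply arbitrary polynomial decay in $|\Im s_1|, |\Im s_2|, |\Im w|$. Combining these with the $(pq)^{-1/2}$ already isolated gives bullet one.

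For bullet two, the remainder term \eqref{eq:yrem} no longer presents the clean product of Dirichlet L-functions encoded in $C(s_1,s_2,w)$, but the same pipeline from Steps two and three still applies; the slightly less favorable L-factors produce the same $p^{\theta_1}q^{\theta_2}/\sqrt{pq}$ size multiplied by an arbitrarily small but fixed power $q^\delta$, exactly as announced in the paragraph preceding the proposition. Bullet three then follows at once: equation \eqref{eq:mmain}, after Proposition~\ref{bigskew} and the contour shift producing \eqref{eq:ymr88}, decomposes as main term plus remainder term, so its bound is the larger of the two estimates, namely $(1+q^{\delta})p^{\theta_1+\epsilon}q^{\theta_2+\epsilon}/\sqrt{pq}$. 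The main obstacle is carrying out the analogue of Step three for the remainder term precisely enough to show that the modified product of L-factors there still satisfies a subconvexity-type estimate, losing only the advertised factor $q^\delta$; the rest is routine but lengthy analytic bookkeeping once Propositions~\ref{eq:ste22} and \ref{eq:hbes88} are in hand.
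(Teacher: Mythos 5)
Your plan tracks the paper's argument closely: the contour shift of $s_1$ to the critical line, the pointwise bounds of Step three for $C$ and $D_{A/B}$, the observation that $(pq)^{s_1+s_2}/(p^{s_1}q^{s_2})=p^{s_2}q^{s_1}$ has modulus $\sqrt{pq}$ on the critical lines, and integration by parts in the test-function Mellin transforms for convergence are all the paper's ingredients. Your pole-free justification of the shift (entirety of the numerator $L$-factors, nonvanishing of the denominator $L$-factors for $\Re\geq 1$) fills in a step the paper merely asserts, which is a useful addition.

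A few gaps remain, however. First, you never address convergence of the outer $m$-sum; the paper needs the decay estimate \eqref{eq:mconv}, obtained from the $h$-integral via integration by parts, to make the $m$-sum (after Poisson) converge. Second, your claim that both test-function integrals supply polynomial \emph{decay} in $|\Im s_1|,|\Im s_2|,|\Im w|$ is slightly off: the paper's bound on $\int F'_{M'}(t)\,t^{(s_1+s_2+2w-1)/2}dt$ is $\bigl(|s_1+s_2|/(1+|w|)\bigr)^M$, which \emph{grows} in $|s_1+s_2|$ and must be absorbed by choosing the decay exponent $N$ in the $F_M$-integral large enough ($N\geq M+2$); this balancing is what makes the triple integral converge. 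Third, for the remainder term, the shift actually lands $s_1$ at $1/2+\delta$ with $s_2$ at $1/2-\delta$, and together with the extra $\alpha$-integral and Stirling bound on $\Gamma(\alpha)\Gamma(s_2-\alpha)/\Gamma(s_2)$ this is exactly where the $q^{\delta}$ loss arises; your proposal waves at this without pinning down why the loss is only $q^{\delta}$ and not something worse. Finally, \eqref{eq:mmain} is the full Kloosterman--Kloosterman term including the $n=0$ contribution, which you never mention; it is bounded separately in \eqref{eq:zerothe} by $(pq)^{\theta_1-1+\epsilon}$ (up to the amplifier factors) and happens to be dominated, but bullet three formally requires noting it. None of these change the route, but they need to be supplied for a complete proof.
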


{\bf Acknowledgements.}  
I would like to thank Roman Holowinsky for suggesting this problem to me. I would also like to thank Matt Young, Paul Nelson, as well as Roman, for pointing out some errors in initial drafts of this paper.

\section{Rankin-Selberg L-function}
Let $\chi$ and $\psi$ be Dirichlet characters modulo prime numbers $p$ and $q,$ respectively, with $(p,q)=1.$ Let  $f \in H(p,\chi)$ and $g \in H(q,\psi)$ be newforms associated to $GL_2$ cuspidal automorphic representations. The Rankin-Selberg L-function is 
\begin{equation}\label{eq:RS}
L(s,f \times g):=L(\chi \psi,2s)\sum_{n =1}^\infty \frac{a_f(n)a_g(n)}{n^s}.
\end{equation}
It is known that this L-function has analytic continuation to the complex plane.
Following \cite{KMV} we can write the Rankin-Selberg L-function as a truncated Dirichlet series 
using an approximate functional equation. Namely, we can write $$L(1/2, f \times g)=\sum_{n \geq 
1} \frac{a_f(n)a_g(n)}{n^{1/2}} H(\frac{n}{pq}) + \epsilon(f \times g)\sum_{n \geq 1}  \frac{\overline
{a_f(n)a_g(n)}}{n^{1/2}} H'(\frac{n}{pq}),$$ with $|\epsilon(f \times g)|=1$ and defined in \cite{KMV}. 
The function $H$ (resp.$H'$) satisfies $H(y) \ll y^{-A},$ for all $A>0.$ As well for $y$ small  $H$ (resp.$H'$) satisfies $$H(y) \ll  (pq)^{\epsilon}|\log y|.$$ We will also note that in estimating from line to line $\{expression\}^{\epsilon}$ will note bounding by an arbitrarily small positive power, and is not necessarily the same $\epsilon$ line to line. 

\section{Kuznetsov trace formula}

        We start by defining the Kuznetsov trace formula. We refer to  \cite{Iw} book on
    it's derivation. Let $S(\Gamma_0(p),\chi)$ be the space of
    holomorphic cusp forms of weight $k$ for the group $\Gamma_0(p).$ For each
    form $\phi \in S_k(\Gamma_0(p),\chi),$ let $c_n(\phi)$ be the $n$-th
    Fourier coefficient, then define $$a_n(\phi):=
    \sqrt{\frac{\pi^{-k} \Gamma(k)} {(4n)^{k-1}}}c_n(\phi).$$
Likewise, for Maass cusp forms we define
        $$a_n(\phi):=(\frac{4\pi |n|}{\cosh(\pi
        s)})^{1/2}\rho(n),$$ where $\phi$ has $L^2$ norm one and
        eigenvalue $1/4+s^2$ with Fourier expansion $$
        \phi(z)=\sum_{n \neq 0} \rho(n)W_s(nz).$$ Here $W_s(nz)=2\sqrt{y}
        K_{s-1/2}(ny)e(x).$
 The continuous spectrum coefficients are defined as $$\eta(l,1/2+it):=2\pi^{1+it}{\cosh(\pi t)}^{-1/2} \frac{\tau_{it}(n)}{\Gamma(1/2+it)\zeta(1+2it)},$$ 
where $\tau_{it}(n)=\sum_{ab=n}(a/b)^{it}.$

    Let $V \in C^{\infty}_0(\R^{+}),$ then the Kuznetsov formula states
\begin{equation}\label{eq:kuznet}\sum_{\phi}
h(V,\lambda_{\phi})a_n(\phi)\overline{a_l(\phi)}+\frac{1}{4\pi}
\int_{-\infty}^\infty h(V,t)\eta(n,1/2
+it)\overline{\eta(l,1/2+it)}dt\end{equation} $$= \frac{\delta_{n,l}}{\pi}\left[ \int_{-\infty}^\infty h(V,t) \text{tanh}(\pi t)
t dt  \quad  +  \sum_{k>0,k\text{ even}}
(k-1)h(V,k) \right] +
\sum_{c\equiv 0(p)}^\infty \frac{1}{c} S_{\chi}(l,n,c)V(4\pi \sqrt{ln}/c),$$
 where the sum $\phi$ is
over an orthonormal basis for $S_k(\Gamma_0), k \in 2 \Z$ and
Maass forms w.r.t. the Petersson inner product, and $V \in
C_0^{\infty}(\R-\{0\}).$ The Kloosterman sum is $$S_{\chi}(l,n,c)=\sum_{x(c)^{*}}\chi(x)e(\frac{lx+n\overline{x}}{c}),$$ as well \begin{equation}
 h(V,\lambda)
 := \left\{ \begin{array}{ll}
         i^{k}\int_0^\infty V(x)J_{\lambda -1}(x)x^{-1}dx & \text{if } \lambda  \in  2 \Z; \medskip \\
         \int_0^\infty
V(x)B_{2i\lambda}(x)x^{-1}dx & \text{if } \lambda \in
\R-2\Z.\end{array} \right.  \end{equation} Here, $B_{2it}(x) = (
\text{2 sin}(\pi it))^{-1}(J_{-2it}(x) - J_{2it}(x)),$ where
$J_\mu(x)$ is the standard $J$-Bessel function of index $\mu$ (See
\cite{IK} and \cite{Wat}).

The delta (main) term of the geometric side of the trace formula has associated to it \begin{equation*} \delta_{a,b}
 := \left\{ \begin{array}{ll}
         1 & \text{if } a=b,  \medskip \\
       0 & \text{if } a\neq b. \end{array} \right. \end{equation*}

\section{Including an Amplifier}

 We build in an amplifier, and thus our starting point is \begin{equation}\label{eq:beg}\sum_{f\in H(p,\chi)}h(V,t_f)|\sum_{l \leq L} x_l \lambda_f(l)|^2 \sum_{h \in H(q,\psi) }h(W,t_h) |L(1/2,f \times h)|^2,\end{equation} where $x_l$ are arbitrary complex coefficients.  Specifically, $|L(1/2,f \times h)|^2$ from the approximate functional equation above is equal to four sums given by \begin{equation}\label{eq:approx}
\sum_{f\in H(p,\chi),h \in H(q,\psi) }h(V,t_f)h(W,t_h) \sum_{m,n \geq 1} \frac{1}{(mn)^{1/2}} H(\frac{n}{pq})H'(\frac{m}{pq}) a_f(n)a_f(m)a_h(n)a_h(m).
\end{equation} up to a root number $\epsilon(f \times h).$ We now introduce a partition of unity for the $n$- and $m$-sums similar to \cite{KMV}. Let $\eta(x)$ be a smooth function which is zero for $x\leq 1/2$ and one for $x\geq 1 $ such that $$\eta(x)=\sum_{M\geq 1} \eta_{M}(x)$$ with $\eta_M$ compactly supported in $[M/2,2M]$ such that $x^i \eta_M(x)^{(i)} \ll_i 1$ for any $i \geq 0.$ The number of partitions $M$ less than $X$ is $O(\log X).$ We use such a partition on the two sums $n,m,$ and define $$F(x):=H(x)\eta(pqx)=\sum_{M \geq 1} H(x)\eta_M(pqx):=\sum_{M\geq 1} F_M(x).$$ Following the analysis of \cite{KMV} we also have the bound for all $i,A \geq 0$  \begin{equation}\label{eq:pest}
x^i \frac{\partial^i}{\partial^i x} F_M(x) \ll_{A,i} \frac{(pq)^{\epsilon}}{M^{1/2}}(\frac{pq}{x})^A
\end{equation} for any $\epsilon >0.$
The notation for the analogous partition on the $m$-sum and $H'$ will be $\eta_{M'},F'_{M'}.$ Also note by the decay in $H$ (resp. $H'$) in the definition of the approximate functional equation we have $M \leq (pq)^{1+\epsilon}.$

So we will study the sum  
\begin{multline}\label{eq:approx007}
\sum_{M,M' \geq 1} \sum_{f\in H(p,\chi),h \in H(q,\psi) }h(V,t_f)h(W,t_h) |\sum_{l \leq L} x_l \lambda_f(l)|^2  \sum_{m,n \geq 1} \frac{1}{(mn)^{1/2}} F_M(\frac{n}{pq})F'_{M'}(\frac{m}{pq})\times \\ a_f(n)a_f(m)a_h(n)a_h(m).
\end{multline} 
Now opening the square and using Hecke properties, we get
$$ |\sum_{l \leq L} x_l \lambda_f(l)|^2=\sum_{l_1,l_2 \leq L} x_{l_1} x_{l_2} \sum_{k|(l_1,l_2)}\lambda_f(\frac{l_1 l_2}{k^2}).$$ Incorporating the above equation into \eqref{eq:approx007} for a fixed $M,M'$ from our partition of unity  we have \begin{multline}\label{eq:approxie} \sum_{l_1,l_2 \leq L} x_{l_1} x_{l_2}\sum_{k|(l_1,l_2)}  \sum_{\dd|{\frac{l_1l_2}{k^2}}} \frac{1}{{\dd}^{1/2}} \sum_{m \geq 1} \frac{1}{m^{1/2}}F'_{M'}(\frac{\dd m}{pq}) \\ \left[  \sum_{n \geq 1} \frac{1}{n^{1/2}} F_M(\frac{n}{pq})   \sum_{f\in H(p,\chi),h \in H(q,\psi) }h(V,t_f)h(W,t_h) a_f(n)a_h(\dd m)a_h(n)a_f(\frac{l_1 l_2}{\dd k^2} m)\right].
\end{multline}
Let us define $$D_{n,l}(V):= \frac{\delta_{n,l}}{\pi}\left[ \int_{-\infty}^\infty h(V,t) \text{tanh}(\pi t)
t dt  \quad  +  \sum_{k>0,k\text{ even}}
(k-1)h(V,k) \right].$$ Then performing Kuznetsov twice and Poisson summation on the $n$-sum in the $\text{\{Kloosterman-Kloosterman term \}}$ we have, as in section 7 of \cite{H}, 

\begin{multline}\label{eq:poisson}    \sum_{l_1,l_2 \leq L} x_{l_1} x_{l_2}\sum_{q|(l_1,l_2)}  \sum_{\dd|{\frac{l_1l_2}{k^2}}} \frac{1}{{\dd}^{1/2}} \sum_{m \geq 1} \frac{1}{m^{1/2}}F'_{M'}(\frac{\dd m}{pq})\times \\ \vast\{ \sum_{n \geq 1} \frac{1}{n^{1/2}} F_M(\frac{n}{pq}) D_{n,\frac{l_1 l_2m}{\dd k^2}}(V)D_{n,\dd m}(W)  \text{ \{delta-delta term\}}+ \\  \sum_{n \geq 1} \frac{1}{n^{1/2}} F_M(\frac{n}{pq}) D_{n,\frac{l_1 l_2m}{\dd k^2}}(V) \bigg(\sum_{c_2\equiv 0(q)} \frac{S_{\psi}(n,\dd m,c_2)}{c_2}W(\frac{4\pi \sqrt{n \dd m}}{c_2})\bigg)\text{ \{delta-Kloosterman term\}} +\\
 \sum_{n \geq 1} \frac{1}{n^{1/2}} F_M(\frac{n}{pq}) D_{n,\dd m}(W)\bigg(\sum_{c_1\equiv 0(p)} \frac{S_{\chi}(n,\frac{l_1 l_2m}{\dd k^2},c_1)}{c_1}V(\frac{4\pi \sqrt{ \frac{nl_1 l_2m}{\dd k^2}}}{c_1})\bigg)\text{ \{Kloosterman-delta term\}}+ \\
\sqrt{pq} \sum_{\substack{c_1\equiv 0(p)\\c_2\equiv 0(q)}} \frac{1}{c_1 c_2}\sum_{x  (c_1)^{*}}
 \sum_{y  (c_2)^{*}} \chi(\overline{x})\psi(\overline{y})e(\frac{\overline{x}\dd m}{c_1}+\frac{\overline{y}\frac{l_1 l_2m}{\dd k^2}}{c_2}) \times \\ \bigg\{
 \sum_k
 \int_{-\infty}^{\infty} e(pqt(\frac{c_2 x + c_1 y -kc_1c_2}{c_1 c_2})) \frac{F_M(t)}{t^{1/2}}
 V(\frac{4\pi
\sqrt{tpq\dd m}}{c_1})  W(\frac{4\pi \sqrt{tpq\frac{l_1 l_2m}{\dd k^2}}}{c_2})dt\bigg\} \\ \text{ \{Kloosterman-Kloosterman term\}}\vast\}.  \end{multline}

We will deal with the hardest case first (Kloosterman-Kloosterman term) in Section \ref{zero} and Section \ref{remb}, then in Section \ref{dd} we deal with the delta-delta term. In Section \ref{dk} following that, we deal with both the delta-Kloosterman term and Kloosterman-delta term.


\section{Kloosterman-Kloosterman term}\label{zero}

We let $n= c_2 x + c_1 y -kc_1c_2,$ and reparametrize the Kloosterman-Kloosterman term of \eqref{eq:poisson} analogous to the equations up to (7.6) of \cite{H}, quoting the propositions we need from that paper. The only difference between lemmas we need here and that of \cite{H} is that, $c_1\equiv 0(p)$ and $c_2 \equiv 0(q)$ as there we have only full level. This does not affect the proofs of this reparametrization and so we state the propositions we need from \cite{H}.
\begin{definition}\label{equiv}
Let $\overline{X}(c_1,c_2,n)$ denote the equivalence classes of
pairs $(x,y)$ with $x,y\in\mathbf{Z}$ such that $(x,c_1)=1$,
$(y,c_2)=1$, and
$$
c_2x+c_1y = n.
$$
Here we say that $(x,y)$ is equivalent to $(x',y')$ if $x\equiv
x'\pmod{c_1}$ and $y\equiv y'\pmod{c_2}$. Let $X(c_1,c_2,n)$ be a
set of representatives for the classes in
$\overline{X}(c_1,c_2,n)$.
\end{definition}

\begin{prop}\label{n00}
Let $(x,y)\in X(c_1,c_2,0),$ then $x=-y, c_1=c_2.$ This implies $c_1=c_2\equiv 0(pq).$
\end{prop}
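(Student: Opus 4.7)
The proof is essentially a short exercise in elementary divisibility, so my plan is brief. Starting from the defining relation $c_2 x + c_1 y = 0$ for $(x,y) \in X(c_1,c_2,0)$, I would rewrite it as $c_2 x = -c_1 y$ and exploit the coprimality hypotheses $(x,c_1)=1$ and $(y,c_2)=1$. From $c_1 \mid c_2 x$ together with $(x,c_1)=1$ one gets $c_1 \mid c_2$, and symmetrically from $c_2 \mid c_1 y$ with $(y,c_2)=1$ one gets $c_2 \mid c_1$. Since $c_1, c_2$ are positive divisors of each other, this forces $c_1 = c_2$.

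Substituting $c_1 = c_2$ back into $c_2 x + c_1 y = 0$ and dividing by the nonzero $c_1$ yields $x = -y$, which gives the first assertion.

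For the second assertion, I would invoke the standing hypothesis from the Kloosterman-Kloosterman term of \eqref{eq:poisson} that $c_1 \equiv 0 \pmod p$ and $c_2 \equiv 0 \pmod q$. Since we have just shown $c_1 = c_2$, this common value is divisible by both $p$ and $q$. As $p$ and $q$ are distinct primes (so coprime), we conclude $c_1 = c_2 \equiv 0 \pmod{pq}$.

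There is no real obstacle here; the only subtlety is keeping track of the sign/positivity convention on $c_1, c_2$ so that $c_1 \mid c_2$ and $c_2 \mid c_1$ genuinely yields equality rather than $c_1 = \pm c_2$. Under the convention in the Kuznetsov formula (summation over positive moduli $c_1 \equiv 0(p)$, $c_2 \equiv 0(q)$) this is automatic, so the argument is complete.
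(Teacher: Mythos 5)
Your proof is correct, and it is the standard elementary argument. The paper itself does not reproduce a proof of Proposition \ref{n00} here --- it refers the reader to \cite{H} --- but the divisibility chain you give ($c_1 \mid c_2 x$ with $(x,c_1)=1$ forces $c_1 \mid c_2$, symmetrically $c_2 \mid c_1$, hence $c_1=c_2$ for positive moduli, then $x=-y$ by dividing out $c_1$) is exactly the natural argument and matches the treatment one finds in \cite{H}. Your observation about the positivity convention on $c_1,c_2$ in the Kuznetsov formula is the right thing to flag, and your deduction of $pq \mid c_1$ from $p\mid c_1$, $q\mid c_2=c_1$, and $p\neq q$ prime is correct. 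Nothing to add.
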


\begin{prop}\label{inverse} Let $(x,y)\in X(c_1,c_2,n)$ and $\xbar \in\mathbf{Z} $ be an inverse of $x$ modulo $c_1$ and $\ybar \in\mathbf{Z}$ be an inverse of $y$ modulo
$c_2$. Then there exists a pair $(r_1,r_2)$ such that
$r_1r_2\equiv 1\pmod{n}$ and
\begin{equation}\label{a}
\xbar = \frac{c_2+c_1r_1}{n},\qquad \ybar = \frac{c_1+c_2r_2}{n}
\end{equation}
The pair $(r_1,r_2)$ is uniquely determined modulo $n$ by the
equivalence class of the pair $(x,y)$, and the map from
$X(c_1,c_2,n)$ to the set of pairs $(r_1,r_2)$ modulo $n$ is
injective.
\end{prop}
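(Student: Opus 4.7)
The plan is to produce $r_1$ and $r_2$ as explicit integers built from $\xbar$ and $\ybar$, verify the identity $r_1r_2\equiv 1\pmod n$ by a single algebraic manipulation that combines $n=c_2x+c_1y$ with the two inverse relations, and then read off invariance on equivalence classes and injectivity.

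I will first observe that $n = c_2x + c_1y$ forces $n\xbar \equiv c_2 x\xbar \equiv c_2\pmod{c_1}$, so the integer $r_1 := (n\xbar - c_2)/c_1$ is well-defined and rearranges to $\xbar = (c_2+c_1r_1)/n$. By the symmetric argument, $r_2 := (n\ybar - c_1)/c_2 \in \Z$ and $\ybar = (c_1+c_2r_2)/n$. To establish $r_1r_2 \equiv 1\pmod n$, I would first rewrite
\[
c_1c_2(r_1r_2-1) \;=\; (n\xbar-c_2)(n\ybar-c_1) - c_1c_2 \;=\; n\bigl(n\xbar\ybar - c_1\xbar - c_2\ybar\bigr),
\]
and then substitute $n = c_2x+c_1y$ inside $n\xbar\ybar$ and use $x\xbar = 1 + kc_1$, $y\ybar = 1 + \ell c_2$ for some $k,\ell\in\Z$. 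This gives
\[
n\xbar\ybar \;=\; c_2\ybar(x\xbar) + c_1\xbar(y\ybar) \;=\; c_2\ybar + c_1\xbar + c_1c_2(k\ybar+\ell\xbar),
\]
so $n\xbar\ybar - c_1\xbar - c_2\ybar = c_1c_2(k\ybar+\ell\xbar)$ and hence $r_1r_2 - 1 = n(k\ybar+\ell\xbar)$, as desired.

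For well-definedness of $(r_1,r_2)\bmod n$ on the equivalence class, I would note that changing the integer representative $\xbar\mapsto \xbar+c_1$ shifts $r_1$ by exactly $n$, and that replacing $(x,y)$ by the equivalent pair $(x+jc_1,y-jc_2)$ leaves both $x\bmod c_1$ and $y\bmod c_2$ fixed, so it leaves the inverses $\xbar\bmod c_1$, $\ybar\bmod c_2$ unchanged. Injectivity then drops out of the defining relation: if two classes give residues $r_1\equiv r_1'\pmod n$, then $n(\xbar-\xbar') = c_1(r_1-r_1') \in c_1 n\Z$, so $\xbar\equiv\xbar'\pmod{c_1}$ and consequently $x\equiv x'\pmod{c_1}$; the parallel argument yields $y\equiv y'\pmod{c_2}$.

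The step I expect to be the real obstacle is the identity $r_1r_2\equiv 1\pmod n$. A naive product of the defining congruences $c_1r_1\equiv -c_2$ and $c_2r_2\equiv -c_1\pmod n$ only delivers $c_1c_2(r_1r_2-1)\equiv 0\pmod n$, which is insufficient whenever $\gcd(c_1c_2,n)>1$, and this case does occur (one has $\gcd(c_i,n) = \gcd(c_1,c_2)$). Recovering the missing divisors of $c_1c_2$ requires invoking the inverse relations modulo $c_1$ and modulo $c_2$ simultaneously, and the substitution of $n = c_2x+c_1y$ into $n\xbar\ybar$ is exactly the manipulation that packages both of them at once.
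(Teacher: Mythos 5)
Your argument is correct: defining $r_1=(n\xbar-c_2)/c_1$ and $r_2=(n\ybar-c_1)/c_2$, verifying integrality via $n\xbar\equiv c_2\pmod{c_1}$, and establishing $r_1r_2\equiv 1\pmod n$ by substituting $n=c_2x+c_1y$ into $n\xbar\ybar$ so that both inverse relations enter simultaneously is precisely the right maneuver, and your well-definedness and injectivity steps are sound. Note that the paper itself gives no proof here but defers to \cite{H}; your computation is the standard direct one that appears there, and you correctly flag that the naive congruence $c_1c_2(r_1r_2-1)\equiv 0\pmod n$ is insufficient on its own when $\gcd(c_1c_2,n)>1$, which is the actual content of the lemma.
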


 \begin{definition} Let $c_1$, $c_2$ be positive integers. Set  $d = (c_1,c_2)$. Assume that $d|n$.
 Let $Y(c_1,c_2,n)$ be the set of  classes $r\in(\Z/n)^*$ such that
\begin{enumerate}
\item[(a')] $(c_1/d)r+(c_2/d) \equiv 0\pmod{\frac{n}{d}}$
\item[(b')] $(c_1/d)r+(c_2/d) \not\equiv 0 \pmod{\frac{n}{d'}}$
if  $d'|d$ and  $d'<d$.
\end{enumerate}
 \end{definition}

 \begin{prop}\label{bijection} The map $i:(x,y)\to r_1$ defines a bijection between $X(c_1,c_2,n)$
 and $Y(c_1, c_2,n)$.
 \end{prop}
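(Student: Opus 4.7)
The plan is to verify three things: (i) the map lands in $Y(c_1,c_2,n)$; (ii) it is injective; (iii) it is surjective. Injectivity is already recorded as the last sentence of Proposition \ref{inverse}, so the work is (i) and (iii).

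For (i), given $(x,y)\in X(c_1,c_2,n)$, the relation $r_1 r_2\equiv 1\pmod n$ in Proposition \ref{inverse} shows $r_1\in(\Z/n)^*$. Rewriting the first equation of \eqref{a} as $n\overline{x} = c_2+c_1 r_1$ with $\overline{x}\in\Z$ gives $(c_1/d)r_1+(c_2/d) = \overline{x}\cdot(n/d)$, which is precisely (a'). For (b'), I first observe that $(\overline{x},c_1)=1$ is equivalent to $(\overline{x},d)=1$: any prime dividing both $\overline{x}$ and $c_1$ also divides $c_2 = \overline{x} n - c_1 r_1$, and hence $d=(c_1,c_2)$. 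Next, $n/d' \mid \overline{x}(n/d)$ is equivalent to $d/d' \mid \overline{x}$ (since $d'\mid d$), so (b') reads ``no divisor $d/d'>1$ of $d$ divides $\overline{x}$'', i.e., $(\overline{x},d)=1$. Thus (b') follows from $(\overline{x},c_1)=1$, which is inherited from $(x,c_1)=1$.

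For (iii), fix $r\in Y(c_1,c_2,n)$ and set $\overline{x} := (c_2+c_1 r)/n$, an integer by (a'); the equivalence from (i) run in reverse shows $(\overline{x},c_1)=1$, so choose an integer lift $x$ of $\overline{x}^{-1}\pmod{c_1}$ and define $y := (n-c_2 x)/c_1$. This $y$ is an integer because $\overline{x} n\equiv c_2\pmod{c_1}$ yields $c_2 x\equiv c_2\overline{x}^{-1}\equiv n\pmod{c_1}$. By construction $c_2 x + c_1 y = n$ and $(x,c_1)=1$. The key verification is $(y,c_2)=1$: writing $x\overline{x} = 1+c_1\ell$, the computation
\begin{equation*}
c_1(rx-y) = x(c_1 r + c_2) - n = x\overline{x}\, n - n = c_1\ell n
\end{equation*}
gives $y\equiv rx\pmod n$. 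A parallel argument (or the observation that $r^{-1}\in Y(c_2,c_1,n)$, obtained by multiplying the (a') congruence through by $r^{-1}$) shows $\overline{y} := (c_1+c_2 r^{-1})/n\in\Z$. Then
\begin{equation*}
\overline{y}\, n\, y = y(c_1 + c_2 r^{-1}) = c_1 y + c_2 r^{-1} y = (n-c_2 x) + c_2 r^{-1} y = n + c_2(r^{-1}y - x),
\end{equation*}
and $r^{-1}y\equiv x\pmod n$ gives $\overline{y}\, y\equiv 1\pmod{c_2}$, hence $(y,c_2)=1$. Thus $(x,y)\in X(c_1,c_2,n)$, and applying Proposition \ref{inverse} to this pair returns $r_1=r$.

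The main obstacle is the precise matching between condition (b') and the coprimality $(\overline{x},c_1)=1$. Without this equivalence neither direction of the bijection works: (b') is needed to guarantee that $\overline{x}^{-1}$ exists mod $c_1$ in the surjectivity step, and conversely the coprimality of the original pair is exactly what produces (b') in the image. Once that equivalence is settled, the surjectivity argument collapses to the short identity $c_1(rx-y)=(x\overline{x} -1)n$ together with its symmetric companion.
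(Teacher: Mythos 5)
Your argument is correct, and since the paper itself only cites \cite{H} for Propositions \ref{n00}--\ref{bijection} rather than proving them, there is no in-paper proof to compare against; your self-contained verification is therefore a useful supplement. The heart of the matter is exactly where you place it: the chain $(\xbar,c_1)=1 \Leftrightarrow (\xbar,d)=1 \Leftrightarrow$ (b'), where the nontrivial direction $(\xbar,d)=1 \Rightarrow (\xbar,c_1)=1$ comes from $c_2 = \xbar n - c_1 r$, and the translation of (b') into a coprimality statement uses $n/d' \mid \xbar(n/d) \Leftrightarrow d/d' \mid \xbar$. The surjectivity identities $c_1(rx-y)=(x\xbar-1)n$ and $\ybar\,n\,y = n + c_2(r^{-1}y-x)$ both check out, and $r^{-1}y-x\equiv 0\pmod n$ is precisely what lets you divide through by $n$ to get $\ybar y\equiv 1\pmod{c_2}$.

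One small point worth tightening: the last sentence of Proposition \ref{inverse} asserts injectivity of $(x,y)\mapsto(r_1,r_2)$, not of $(x,y)\mapsto r_1$; you should add the one-line remark that $r_2$ is determined by $r_1$ through $r_1 r_2\equiv 1\pmod n$, so injectivity of the pair map does imply injectivity of the first-coordinate map. Also, in the surjectivity step $\ybar=(c_1+c_2 r^{-1})/n$ is only well-defined once a representative of $r^{-1}$ modulo $n$ is fixed; changing that representative shifts $\ybar$ by a multiple of $c_2$, which is harmless for the congruence $\ybar y\equiv 1\pmod{c_2}$, but it is worth saying so explicitly.
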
 For proofs of Propositions \ref{n00}, \ref{inverse} and \ref{bijection},
see \cite{H}.

Now we break the Kloosterman-Kloosterman term into $2$ cases: $n=0$ and $n \neq 0.$

\subsection{ [Case $n=0$]}
 This is by far the easier of the two cases. Using Proposition \ref{n00} above for $n=0,$ we must have $c_1=c_2\equiv 0(pq)$ and $\overline{x}=-\overline{y}.$

\begin{equation}\label{eq:poisson00} \sqrt{pq} \sum_{m\geq 1} \frac{1}{m^{1/2}}F'_{M'}(\frac{\dd m}{pq}) \sum_{c_1\equiv 0(qp)} \frac{1}{c_1^2}\sum_{x  (c_1)^{*}}
 \chi(\overline{x})\psi(-\overline{x})e(\frac{\overline{x}m(\dd -\frac{l_1 l_2}{\dd k^2})}{c_1})\times  \end{equation}
 $$\int_{-\infty}^{\infty} \frac{F_{M}(t)}{t^{1/2}}
 V(\frac{4\pi
\sqrt{tpq\dd m}}{c_1})  W(\frac{4\pi \sqrt{tpq\frac{l_1 l_2m}{\dd k^2}}}{c_1})dt.$$

Note if $\dd -\frac{l_1 l_2}{\dd k^2}=0,$ then character sum is zero as $\psi \neq \chi.$ So we assume for now on $\dd -\frac{l_1 l_2}{\dd k^2}\neq 0.$

Now the $x$-sum above is a Gauss sum with primitive character $\chi \psi$ of conductor $pq.$ In order for this sum to be non-zero we must have $$e(\frac{\overline{x}m(\dd -\frac{l_1 l_2}{\dd k^2})}{c_1})=e(\frac{\overline{x}m'a}{pq}),$$ where $m'|m, a|(\dd -\frac{l_1 l_2}{\dd k^2}),$ and $(m'a,pq)=1.$ Further we must have $c_1=pqr,$ where $(r,pq)=1$ otherwise the whole term \eqref{eq:poisson00} is zero as the support of $V,W, $ and $g$ implies $m \sim pq$ and so $r$ must be independent of $p,q.$ 

Then by Chinese remainder theorem the Gauss sum equals \begin{multline}\label{eq:ggg}
\sum_{x  (c_1)^{*}}
 \chi(\overline{x})\psi(-\overline{x})e(\frac{\overline{x}m(\dd -\frac{l_1 l_2}{\dd k^2})}{c_1})=\bigg[\sum_{x  (p)^{*}}
 \chi(\overline{x})e(\frac{\overline{qrx}m(\dd -\frac{l_1 l_2}{\dd k^2})}{p})\bigg]\times \\ \bigg[\sum_{y  (q)^{*}}
 \psi(\overline{y})e(\frac{\overline{pry}m(\dd -\frac{l_1 l_2}{\dd k^2})}{q})\bigg] \bigg[\sum_{z  (r)^{*}}
e(\frac{\overline{pqz}m(\dd -\frac{l_1 l_2}{\dd k^2})}{r})\bigg].
\end{multline}

Now to get a non-zero contribution for \eqref{eq:poisson00}, $$(m(\dd -\frac{l_1 l_2}{\dd k^2}),pq)=1.$$

With an easy change of variables, \eqref{eq:ggg} equals $$\overline{\chi(qr)}\overline{\psi(pr)}\overline{\chi(m(\dd -\frac{l_1 l_2}{\dd k^2})})  \overline{\psi(m(\dd -\frac{l_1 l_2}{\dd k^2})}) \tau(\chi)\tau(\psi) f_{\frac{c_1}{pq}}(m(\dd -\frac{l_1 l_2}{\dd k^2})),$$

where $$f_n(m)=\sum_{b|(m,n)}\mu(\frac{n}{b})b,$$ $\tau(\chi),\tau(\psi)$ are Gauss sums, and $r=\frac{c_1}{pq}.$

Let \begin{equation} Z(x):=\frac{1}{x}\int_{-\infty}^{\infty}
\frac{F_{M}(t)}{t^{1/2}}V(\frac{4\pi \sqrt{t \dd m}}{x})W(\frac{4\pi
\sqrt{\frac{l_1 l_2m}{\dd k^2}t}}{x})dt.\end{equation} Denoting the Mellin transform
of $Z(x)$ as $ \widetilde{Z}(s)$, we use Mellin inversion to write the $c_1$-sum in \eqref{eq:poisson00} as
\begin{equation}\label{eq:ja}
 \frac{1}{\sqrt{pq}} \sum_{c_1\equiv 0(pq)}
\frac{f_{\frac{c_1}{pq}}(m(\dd -\frac{l_1 l_2}{\dd k^2}))}{c_1} Z(\frac{c_1}{\sqrt{pq}})=
    \left\{ \frac{1}{2\pi
i}\int_{(\sigma)}
\widetilde{Z}(s)L(s)(\sqrt{pq})^{s} ds \right\} \end{equation} for $\sigma>0$ large, and

where $$L(s):=\sum_{c_1\equiv 0(pq)}^\infty \frac{f_{\frac{c_1}{pq}}(m(\dd -\frac{l_1 l_2}{\dd k^2}))}{c_1^{1+s}}=\frac{1}{(pq)^{1+s}}\left[ \sum_{b|m(\dd -\frac{l_1 l_2}{\dd k^2})}\frac{1}{b^s}\right] \sum_{c_1=1}^\infty \frac{\mu(c_1)}{c_1^{1+s}}.$$
The second equality follows from inverting the $c_1$- and $b$-sum.

 In \eqref{eq:ja} we can shift the contour again to $\Re(s)=1$ bounding the $b$-sum by $(m(\dd -\frac{l_1 l_2}{\dd k^2}))^{\epsilon}.$ We have left to estimate \begin{equation}\label{eq:myo}
\frac{\tau(\chi)\tau(\psi)}{(pq)^{3/2}} \sum_{m\geq 1} \frac{\chi(m)\psi(m)m^{\epsilon}}{m^{1/2}}F'_{M'}(\frac{\dd m}{pq}) \int_{(1)} \left[\int_{-\infty}^{\infty}
\frac{F_{M}(t)}{t^{1/2}}V(\frac{4\pi \sqrt{t \dd m}}{x})W(\frac{4\pi
\sqrt{\frac{l_1 l_2m}{\dd k^2}t}}{x})dt x^{s-2}dx\right] ds.
\end{equation}
 
Let $$Z'(z):=  F'_{M'}(\frac{\dd z}{pq}) \int_{(1)}\left[ \int_{-\infty}^{\infty}
\frac{F_{M}(t)}{t^{1/2}}V(\frac{4\pi \sqrt{t \dd z}}{x})W(\frac{4\pi
\sqrt{\frac{t l_1 l_2 z}{\dd k^2}}}{x})dt x^{s-2}dx\right] ds.$$

Then \eqref{eq:myo} equals $$\frac{\tau(\chi)\tau(\psi)}{(pq)^{3/2}}\frac{1}{2\pi i} \int_{(\sigma_1)} \widetilde{Z'}(w) L(1/2+w-\epsilon,\chi \psi) (pq)^w dw.$$ We can shift this contour to $\Re(w)=\epsilon$ where $\epsilon >0$ without hitting a pole. Then we note $$\widetilde{Z'}(w)=\int_0^\infty F'_{M'}(\frac{\dd v}{pq})  \left[\int_{(1)} \int_{-\infty}^{\infty}
\frac{F_{M}(t)}{t^{1/2}}V(\frac{4\pi \sqrt{t \dd v}}{x})W(\frac{4\pi
\sqrt{\frac{t l_1 l_2 v}{\dd k^2}}}{x})dt x^{s-2}dx\right] ds v^{w-1} dv \ll (pq)^{\epsilon},$$ and also by the support of the functions $V,W,$ we get $\dd k\sim \sqrt{l_1 l_2}.$

The convexity bound in the level aspect for a Dirichlet L-function, with the level $q,$ is $$L(1/2+it,\chi) \ll q^{1/4+\epsilon}.$$ A subconvexity bound is $$L(1/2+it,\chi) \ll q^{\theta_1+\epsilon},$$ where  $\theta_1 <1/4.$  Not to discriminate using a convexity or subconvexity bound in this paper, we call it a (sub)convexity bound and note \eqref{eq:myo} is bounded by $$(pq)^{\theta_1-1+\epsilon}.$$
 
Therefore using the fact that $\dd q\sim \sqrt{l_1 l_2},$ with the sum over $l_1,l_2,$ we have  \begin{equation}\label{eq:zerothe}(pq)^{\theta_1-1+\epsilon}  \sum_{l_1,l_2 \leq L}\frac{ x_{l_1} x_{l_2}}{(l_1l_2)^{1/4}}\sum_{q|(l_1,l_2)} \sqrt{q} \ll (pq)^{\theta_1-1+\epsilon} L^{\epsilon} ||x||_2^2.\end{equation}

\subsection{ [Case $n\neq0$]}

\subsubsection{Explanation of this case}
Let us summarize what happens in this case, as it is the most complicated case in the paper. After using the bijection of \cite{H}, we have reparametrized the $c_1,c_2,$ and $k$- sums in \eqref{eq:poisson} into a new $n$-sum. This puts arithmetic conditions on the $c_1,c_2$-sums that we remove by standard techniques (Mobius inversion, inverting sums). This leaves us with the result of Proposition \ref{bigp}. From there we apply Poisson summation in the $m$-sum in Section \ref{myoy} coming from \eqref{eq:klokl}. This lets us exchange Kloosterman sums for counting solutions to quadratic equations as in the thesis of Rudnick \cite{R}. This same Poisson summation in \cite{R} gives entry to the geometric side of the Selberg trace formula. Now we can write the leftover sums coming from the previous Mobius inversion and Poisson summation in terms of a product of various L-functions. The sizes of these L-functions will directly control how large the final estimation is for \eqref{eq:beg}. The expression in terms of L-functions is seen in \eqref{eq:res}. From there we require bounding the archimedean integrals that arise in the calculation. After several changes of variables it is evident that the important archimedean estimation comes from an unexpected Bessel function that  arises. Estimating these integrals, including the Bessel function, will only depend on the amplifier parameter $L.$ We see that specific dependence only on the amplifier in the analysis of Proposition \ref{dyo}. Then we include the terms from the amplifier itself and do a final estimation.


\subsubsection{Analysis of $n \neq 0$ case}
Using the bijection we get $\overline{x}=\frac{c_1 r+c_2}{n}(c_1),$ and $\chi(\overline{x})=\chi(c_2)\chi(\overline{n})$ with certain conditions on the $c_1,c_2$-sums which are explicated in section 8 of \cite{H}.

Let \begin{equation}
\label{eq:iii} I(n,x,y): = \int_{-\infty}^{\infty}
e(\frac{tn}{xy})F_{M}(t)
 V(\frac{4\pi
\sqrt{\frac{td^{\circ}m}{pq}}}{x})  W(\frac{4\pi \sqrt{\frac{t_1 l_1l_2m}{d^{\circ}k^2pq}}}{y})\frac{dt}{t^{1/2}},
\end{equation}

then following \cite{H} exactly we have
\begin{multline}\label{eq:klokl}
 \text{\{Kloo.-Kloo. term\}}=\sqrt{pq}  \sum_{n\in\mathbf{Z}} \sum_{m\geq 1} \frac{1}{m^{1/2}}F'_{M'}(\frac{\dd m}{pq}) \sum_{\substack{c_1\equiv 0(p)\\c_2\equiv 0(q)}} \frac{1}{c_1 c_2}\\ \sum_{(x,y)\in X(c_1,c_2,n)}
\chi(\overline{x})\psi(\overline{y}) e(\frac{\overline{x}l}{c_1}+\frac{\overline{y}l'}{c_2})  I(n,\frac{c_1}{pq},\frac{c_2}{pq})
\end{multline}

Following the argument of section 8 of \cite{H} closely, the $\text{\{Kloosterman-Kloosterman term\}} $ (without the $n=0$ term from last section) equals 
\begin{multline}\label{eq:kk} \frac{\sqrt{pq}}{(pq)^2} \sum_{m\geq 1} \frac{1}{m^{1/2}}F'_{M'}(\frac{d^{\circ}m}{pq})  \sum_{n\neq0 \in \Z} \sum_{r(n)} e(\frac{d^{\circ}mr+\frac{l_1 l_2m}{d^{\circ}k^2}\overline{r}}{n})\overline{\chi(n)\psi(n)}\sum_{d|n} \\  \sum_{\substack{c_1\equiv 0(p)\\ (c_1,n/d)=1}} \sum_{ \substack{ c_2 = -(c_1\,r + \lambda\frac{n}d), c_2\equiv 0(q)\\(\lambda,c_1d)=1 }} \chi(dc_2)\psi(dc_1)H_n(\frac{dc_1}{pq},
\frac{dc_2}{pq}),\end{multline}
 where  \begin{equation} \label{eq:defhn}H_n(x,
y):=\frac{1}{xy}e(\frac{xdm}{ny}+\frac{y\frac{l_1 l_2m}{dk^2}}{nx})I(n,x,y).\end{equation}

This is equivalent to fixing $n,m,$ and $d$ from \eqref{eq:kk} and looking at \begin{equation}\label{eq:mc1} \sum_{\substack{c_1\equiv 0(p)\\ (c_1,n/d)=1}}\psi(dc_1) \sum_{ \substack{ c_2 = -c_1\,r + \lambda\frac{n}d, c_2\equiv 0(q)\\ (c_2,c_1)=1\\(\lambda,d)=1 }}
 \chi(dc_2) H_n(\frac{dc_1}{pq}.
\frac{dc_2}{pq}).
\end{equation}
We apply Mobius inversion on the condition $(c_1,c_2)=1$ to get \begin{equation}\label{eq:mc2}   \sum_{\substack{c_1\equiv 0(p)\\ (c_1,n/d)=1}}\psi(dc_1) \sum_{f|c_1} \mu(f)\sum_{ \substack{ c_2 = -f(c_1\,r + \lambda\frac{n}d), fc_2\equiv 0(q)\\(\lambda,d)=1 }}
 \chi(dfc_2) H_n(\frac{dc_1}{pq},
\frac{dfc_2}{pq}).
\end{equation}

Using the fact that $\chi$ is a character modulo $p$ and $p|c_1,$ $\chi$ will only depend on $-f\lambda n,$ leaving
\begin{equation}\label{eq:mc3}   \sum_{\substack{c_1\equiv 0(p)\\ (c_1,n/d)=1}}\psi(dc_1) \sum_{f|c_1} \mu(f) \sum_{ (\lambda,d)=1 }
 \chi(-f\lambda qn) H_n(\frac{dc_1}{pq},
\frac{dqf(-c_1r+\frac{\lambda n}{d})}{pq}).
\end{equation}

We will be interchanging many sums and integrals so we require an estimate that allows to interchange the $n$-sum.
\begin{definition}
A function $F$ of $p,q$ will be called negligible if for any $A>0$ we have $$F(p,q) \ll (pq)^{-A}.$$

\end{definition}

\begin{lemma}\label{ncc}
The $n$-sum can be limited to length $O((pq)^{\theta} L^2)$ for a small but fixed $\theta >0,$ while the complementary sum is negligible.

\end{lemma}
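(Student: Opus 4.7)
The plan is to exhibit super-polynomial decay of $H_n$ (as a function of the integer $n$) via repeated integration by parts in the $t$-integral of $I(n,x,y)$, using only the smooth test functions $F_M$, $V$, and $W$. The threshold this yields is essentially $|n|\gg (pq)^{\epsilon}L$, which is already much smaller than $(pq)^{\theta}L^{2}$, so there is plenty of room and everything else in \eqref{eq:kk} can be bounded trivially.

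The core estimate is an integration-by-parts bound on $I(n,x,y)$ from \eqref{eq:iii}. Because $F_{M}$ is compactly supported in $t\asymp M/(pq)$ by \eqref{eq:pest}, no boundary terms arise; each IBP produces $xy/(2\pi in)$ from the phase $e(tn/(xy))$, and each $t$-derivative of the remaining smooth integrand costs only $O((pq)^{\epsilon}/t)$---the bound \eqref{eq:pest} handles $F_{M}^{(i)}$, while the arguments of $V$ and $W$ have logarithmic $t$-derivatives equal to $1/(2t)$, so the chain rule delivers the same control for $V$ and $W$ on the relevant support. After $A$ iterations one obtains
$$|I(n,x,y)| \ll_{A} (pq)^{A\epsilon}\Bigl(\frac{M}{pq}\Bigr)^{1/2}\Bigl(\frac{xy\,pq}{M|n|}\Bigr)^{A}.$$

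Substituting $x=dc_{1}/(pq)$ and $y=dc_{2}/(pq)$, the supports of $V$ and $W$ inside $H_{n}$ pin $c_{1}\asymp\sqrt{MM'}/d$ and $c_{2}\asymp\sqrt{M\,l_{1}l_{2}m/(d^{\circ}k^{2})}/d$, giving $xy\,pq/M\ll (pq)^{\epsilon}L$ after invoking $M,M'\leq (pq)^{1+\epsilon}$ and $l_{1}l_{2}\leq L^{2}$; the unimodular exponentials in \eqref{eq:defhn} play no role. Hence
$$|H_{n}(dc_{1}/(pq),dc_{2}/(pq))| \ll_{A} (xy)^{-1}(pq)^{O(\epsilon)}\bigl((pq)^{\epsilon}L/|n|\bigr)^{A}.$$

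Finally, the remaining $m$, $r\pmod{n}$, $d\mid n$, $c_{1}$, $f\mid c_{1}$, and $\lambda$ sums are all bounded trivially. A crude count (the $r$-sum contributes $\leq|n|$, the $\lambda$-sum, constrained by $c_{2}\equiv 0\pmod q$, has $\ll\sqrt{M\,l_{1}l_{2}m/(d^{\circ}k^{2})}/(nq)$ terms so the $|n|$'s cancel, and the $c_{1}$ and $d|n$ sums contribute $(pq)^{O(\epsilon)}$ each) together with the $m^{-1/2}F'_{M'}$ factor shows the total tail contribution to \eqref{eq:kk} from $|n|\geq (pq)^{\theta}L^{2}$ is at most
$$(pq)^{O(\epsilon)}L^{O(1)}\sum_{|n|\geq (pq)^{\theta}L^{2}}|n|^{O(\epsilon)-A},$$
which for $A$ large and $\epsilon\ll\theta$ is $\ll (pq)^{-B}$ for any prescribed $B$. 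The only modest obstacle is the bookkeeping of the support constraints when reducing $xy\,pq/M$ to the clean form $(pq)^{\epsilon}L$; the IBP and the final trivial summation are routine smooth-weight arguments.
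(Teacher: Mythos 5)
Your proposal is correct and takes essentially the same route as the paper: repeated integration by parts in the $t$-integral of $I(n,x,y)$, extracting a factor $xy/(2\pi i n)$ per iteration and using the compact supports of $F_M,F'_{M'},V,W$ to bound the gain by $(pq)^{O(\epsilon)}L^{O(1)}/|n|$, so that the tail $|n|\gg(pq)^{\theta}L^{2}$ is negligible after enough iterations. You are in fact somewhat more careful than the paper at two points---you track the $1/t$ cost of each derivative against the support $t\asymp M/(pq)$, which yields the sharper effective threshold $(pq)^{\epsilon}L$ rather than $(pq)^{\theta}L^{2}$ (the paper's stated $y\sim l_1l_2/({\dd}^2k^2)$ is off by a square root), and you explicitly bound the residual $r,\lambda,c_1,d,f,m$-sums, which the paper leaves implicit (though their contribution is $(pq)^{O(1)}L^{O(1)}$ rather than $(pq)^{O(\epsilon)}L^{O(1)}$, an inconsequential difference once $A$ is taken large).
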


\begin{proof}

By integration by parts on the integral $I(n,x,y)$ we get
\begin{multline}\label{eq:blobo}\sum_{i+j+k=1} \frac{xy}{2\pi i Xn}  \int_{-\infty}^{\infty}
e(\frac{tn}{xy})F_M^{(k)}(t) \left((\frac{2\pi \sqrt{\frac{d^{\circ}m}{pqt}}}{x})^{i}V^{(i)}(\frac{4\pi \sqrt{\frac{td^{\circ}m}{pq}}}{x})\right) \times \\ \left(\frac{2\pi \sqrt{\frac{t_1 l_1l_2m}{d^{\circ}k^2pqt}}}{y})^j W^{j}(\frac{4\pi \sqrt{t\frac{l_1 l_2m}{pq \dd k^2}}}{y})\right) dt. \end{multline} Notice by the compact support of $V,W,$ and $F'_{M'}$ we have $ \dd m \sim pq, t=O(1),$ so $x$ is bounded and $y \sim \frac{l_1l_2}{{\dd}^2 k^2}.$ So we have $\frac{xy}{ n}\ll \frac{L^2}{n}.$ The functions $V,W, F_M$ are smooth and independent of $p,q$ except for the function $F_M(x).$  One gets from \eqref{eq:pest} the estimate $F^{(i)}_M(x) \ll (pq)^{\epsilon}$ for any $\epsilon>0.$  Assuming $n\gg (pq)^{\theta} L^2$ then by the above analysis \eqref{eq:blobo} is bounded by $O((pq)^{\epsilon-\theta}).$ We choose $\epsilon < \theta$ so that the integral decays in $p$ and $q.$ Then integration by parts $Q$ times gives the bound for \eqref{eq:blobo} of $O((pq)^{-Q\theta+\epsilon}).$ For $Q$ large enough and the $n$-sum in this range, \eqref{eq:iii} is negligible.
\end{proof}

We now can limit the $n$-sum to a length $O((pq)^\theta L^2)$ for a fixed but small $\theta>0.$ Using the lemma and including the $n$- and $d$-sums into \eqref{eq:mc3} we now prove

\begin{prop}\label{bigp}
Let $\delta,\theta>0$ be small but fixed. We have \begin{multline} \label{eq:pp1}\frac{\sqrt{pq}}{(pq)^2} \sum_{\substack{n \in \Z\\n\neq 0\\ n \ll (pq)^\theta L^2}} \sum_{r(n)} e(\frac{d^{\circ}mr+\frac{l_1 l_2m}{{\dd}^{\circ}k^2}\overline{r}}{n})\overline{\chi(n)\psi(n)}\sum_{d|n}  \sum_{\substack{c_1\equiv 0(p)\\ (c_1,n/d)=1}}\psi(dc_1) \sum_{f|c_1} \mu(f)\sum_{( \lambda,d)=1 }
 \chi(-f\lambda qn) \\ \int_{({s_1})=10} \int_{({s_2})=10} \widetilde{H_n}(s_1,s_2) (\frac{pq}{dc_1})^{s_1}   (\frac{pq}{dqf(-c_1r+\frac{\lambda n}{d})})^{s_2}ds_1ds_2=\\ \frac{\psi(p)\chi(-q)\sqrt{pq}}{(pq)^2}(\frac{1}{2\pi i})^3\int_{(s_1)=2} \int_{(s_2)=1/2+\delta}    \int_{(w)=1/2}   \frac{\Gamma(w)\Gamma(s_2-w)}{\Gamma(s_2)} \frac{(pq)^{s_1+s_2}}{p^{s_1+s_2-w}q^{s_2}}\\ \frac{L(\chi,w)L(\psi,s_1+s_2-w)}{L(\chi\psi,s_1+2s_2-w)} \bigg[\sum_{e=1} \frac{\mu(e)\chi(e)}{e^{w+s_1+s_2}} \sum_{d=1} \frac{1}{d^{s_1+s_2}} \sum_{a=1} \frac{\mu(a)}{a^{s_1+s_2}} \sum_{b=1}  \frac{\mu^2(b)\chi(b)}{b^{s_1+2s_2}} \\ \sum_{\substack{n \in \Z \\ n\neq 0\\ baedn \ll (pq)^\theta L^2}} \frac{\widetilde{H_{baedn}}(s_1,s_2)}{n^w} \sum_{r(baedn)} \frac{1}{r^{s_2-w}} e(\frac{d^{\circ}mr+\frac{l_1 l_2m}{d^{\circ}k^2}\overline{r}}{baedn})\overline{\psi(n)} \bigg] dwds_1ds_2. \end{multline}

\end{prop}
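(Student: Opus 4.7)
The LHS of \eqref{eq:pp1} is \eqref{eq:mc3} after Mellin-inverting $H_n$ in each of its two arguments and truncating the $n$-sum via Lemma \ref{ncc}. The plan is to transform it into the RHS in three steps: separate the $c_1$ and $\lambda$ summations via a new Mellin contour, evaluate the resulting Dirichlet series in $\lambda$ and $c_1$ as L-functions, and finally strip the coprimality constraints by Mobius inversion. For the first step, the variables $c_1$ and $\lambda$ are coupled only through the factor $(-c_1 r + \lambda n/d)^{-s_2}$ in the integrand. Applying the identity
$$\frac{1}{(1+x)^{s_2}} \;=\; \frac{1}{2\pi i}\int_{(w)}\frac{\Gamma(w)\Gamma(s_2-w)}{\Gamma(s_2)}\,x^{-w}\,dw,$$
as quoted in the excerpt, after factoring out $|c_1 r|$ decouples the $c_1$- and $\lambda$-dependencies into $c_1^{-(s_2-w)}$, $\lambda^{-w}$, $r^{-(s_2-w)}$, and introduces the $w$-contour and Gamma kernel seen on the RHS. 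The prefactor $\chi(-q)$ emerges once one writes $\chi(-f\lambda qn) = \chi(-q)\chi(n)\chi(f)\chi(\lambda)$ and cancels $\chi(n)$ against the $\overline{\chi(n)}$ in the outer sum.

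For the second step, writing $c_1 = pc_1'$ produces the prefactor $\psi(p)p^{-(s_1+s_2-w)}$. The $\lambda$-sum $\sum_{(\lambda,d)=1}\chi(\lambda)\lambda^{-w}$ is a standard Dirichlet series evaluating to $L(w,\chi)$ times local corrections at primes dividing $d$. The coupled $c_1'$- and $f$-sums,
$$\sum_{(c_1',\, n/d)=1}\frac{\psi(c_1')}{(c_1')^{s_1+s_2-w}}\sum_{f_1\mid c_1'}\frac{\mu(f_1)\chi(f_1)}{f_1^{s_2}},$$
are multiplicative in $c_1'$; a direct computation at each prime $\ell$ away from the ramified set yields the local factor $(1-\chi\psi(\ell)\ell^{-(s_1+2s_2-w)})(1-\psi(\ell)\ell^{-(s_1+s_2-w)})^{-1}$, which integrates to the ratio $L(s_1+s_2-w,\psi)/L(s_1+2s_2-w,\chi\psi)$ on the RHS, up to local correction factors at $p, q$ and primes dividing $n/d$.

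For the third step, the coprimality conditions $(c_1', n/d)=1$, $(\lambda, d)=1$, and the local corrections collected from the previous step are handled by Mobius inversion. Each inversion contributes one of the auxiliary sums $\sum_e \mu(e)\chi(e)e^{-(w+s_1+s_2)}$, $\sum_a \mu(a)a^{-(s_1+s_2)}$, $\sum_b \mu^2(b)\chi(b)b^{-(s_1+2s_2)}$, and $\sum_d d^{-(s_1+s_2)}$ records the original $d\mid n$ divisor sum after reparametrization. Substituting $n\mapsto baed\,n$ in the residual sum absorbs these divisors into the argument of $\widetilde{H_n}$ and the modulus of the $r$-sum, yielding precisely the inner bracket of the RHS. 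Absolute convergence of all sums at $\Re(s_1)=2$, $\Re(s_2)=1/2+\delta$, $\Re(w)=1/2$ is immediate from the rapid decay of $\widetilde{H_n}$ in vertical strips (smoothness and compact support of $V, W, F_M$) together with the truncation from Lemma \ref{ncc}, so all interchanges of sums and integrals are legitimate. The main obstacle is the combinatorial bookkeeping of the local Euler factors at the bad primes ($p$, $q$, and primes dividing $n$ and $d$) in the third step, together with the sign handling in the first step: the Mellin identity requires positive arguments while $\lambda$ and $c_1 r$ have signs constrained by the positivity of $c_1, c_2$ in the original Kloosterman sums. Both are essentially combinatorial rather than analytic issues and become routine once the correct Euler-product factorizations are in hand.
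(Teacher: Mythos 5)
Your proposal tracks the paper's proof step-for-step: the Mellin identity $\frac{1}{(1+x)^{s_2}}=\frac{1}{2\pi i}\int_{(w)}\frac{\Gamma(w)\Gamma(s_2-w)}{\Gamma(s_2)}x^{-w}dw$ to decouple $c_1$ from $\lambda$, extraction of $\psi(p)$ from $c_1\equiv 0(p)$, recognition of $L(\chi,w)$ from the $\lambda$-sum and of $L(\psi,s_1+s_2-w)/L(\chi\psi,s_1+2s_2-w)$ from the $c_1,f$-sums, and M\"obius inversions with the $n\mapsto baedn$ reparametrization to peel off the coprimality constraints. The only cosmetic difference is that you package the unconstrained $c_1,f$-sums as a local Euler computation while the paper performs an explicit sequence of sum interchanges (\eqref{eq:ym1} through \eqref{eq:ym6}); the content is the same, and your flagged concern about signs (positivity of the argument in the Mellin identity, tied to $c_2>0$) is a legitimate bookkeeping point that the paper also handles only implicitly.
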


\begin{proof}

Incorporating the $n$ and $d$ sum back into \eqref{eq:mc3}, and applying Mellin inversion to the $H_n$ function we have \begin{multline}\label{eq:ym}
\frac{\sqrt{pq}}{(pq)^2} \sum_{\substack{n \in \Z\\n\neq 0\\ n \ll (pq)^\theta L^2}} \sum_{r(n)} e(\frac{d^{\circ}mr+\frac{l_1 l_2m}{d^{\circ}k^2}\overline{r}}{n})\overline{\chi(n)\psi(n)}\sum_{d|n}  \sum_{\substack{c_1\equiv 0(p)\\ (c_1,n/d)=1}}\psi(dc_1) \sum_{f|c_1} \mu(f)\sum_{( \lambda,d)=1 }
 \chi(-f\lambda qn) \\ \int_{({s_1})} \int_{({s_2})} \widetilde{H_n}(s_1,s_2) (\frac{pq}{dc_1})^{s_1}   (\frac{pq}{dqf(-c_1r+\frac{\lambda n}{d})})^{s_2}ds_1ds_2.
\end{multline}

We need to arithmetically separate the $c_1$- and $\lambda$-sum. Following \cite{Y}, we use the Mellin transform \begin{equation}
\frac{1}{(1+x)^s}=\frac{1}{2\pi i} \int_{(w)} \frac{\Gamma(w)\Gamma(s-w)}{\Gamma(s)} x^{-w} dw
\end{equation} with $\Re(w) \leq \Re(s),$ to detect the condition $$\left(\frac{1}{-c_1rdqf(1+\frac{\lambda n}{-c_1rd})}\right)^{s_2}.$$

So the integral can be rewritten as $$(\frac{1}{2\pi i})^3\int_{({s_1})} \int_{({s_2})}  \int_{(w)}  \widetilde{H_n}(s_1,s_2) \frac{\Gamma(w)\Gamma(s_2-w)}{\Gamma(s_2)}  (\frac{pq}{dc_1})^{s_1}  (\frac{pq}{-c_1rdqf})^{s_2} (\frac{-c_1rd}{\lambda n})^{w}dw ds_1ds_2,$$ with $\Re(s_1)=2,$ $\Re(s_2)=1/2+\delta,$ and $\Re(w)=1/2$ with $\delta>0$ small.

As the $n,c_1,\lambda$-sums are bounded we can interchange them with the integrals. The next goal is to then is detect the many relativly prime conditions with Mobius inversion and rewrite the sums in terms of Dirichlet L-functions. We do this in several steps for clarity. Fix $n,d,$ and $r$ for now. Apply Mobius inversion to the condition $(\lambda,d)=1$ to get the $\lambda$-sum equal to $$\sum_{e|d} \frac{\mu(e)\chi(e)}{e^w} \sum_{\lambda=1} \frac{\chi(\lambda)}{\lambda^{w}}.$$

Let us collect terms now from \eqref{eq:ym},  \begin{multline}\label{eq:ym1}
\frac{\sqrt{pq}}{(pq)^2} \chi(-q)(\frac{1}{2\pi i})^3\int_{(s_1)} \int_{(s_2)}  \int_{(w)}   \frac{\Gamma(w)\Gamma(s_2-w)}{\Gamma(s_2)} \frac{(pq)^{s_1+s_2}}{q^{s_2}} \\ \bigg[L(\chi,w) \sum_{\substack{n \in \Z \\ n\neq 0\\ n \ll (pq)^\theta L^2}}\frac{\widetilde{H_n}(s_1,s_2)}{n^w} \sum_{r(n)} \frac{1}{r^{s_2-w}} e(\frac{d^{\circ}mr+\frac{l_1 l_2m}{d^{\circ}k^2}\overline{r}}{n})\overline{\psi(n)} \sum_{d|n} \frac{\psi(d)}{d^{s_1+s_2-w}} \sum_{e|d} \frac{\mu(e)\chi(e)}{e^w}\\  \sum_{\substack{c_1\equiv 0(p)\\ (c_1,n/d)=1}}\frac{\psi(c_1)}{c_1^{s_1+s_2-w}} \sum_{f|c_1} \frac{\mu(f)\chi(f)}{f^{s_2}} \bigg] dwds_1ds_2. \end{multline}

Write $c_1=p c_1',$ renaming it $c_1$ and inverting the $f$-sum with the $c_1$-sum gives, \begin{multline}\label{eq:ym2}
\frac{\sqrt{pq}\psi(p)\chi(-q)}{(pq)^2}(\frac{1}{2\pi i})^3\int_{(s_1)} \int_{(s_2)}  \int_{(w)}  \frac{\Gamma(w)\Gamma(s_2-w)}{\Gamma(s_2)} \frac{(pq)^{s_1+s_2}}{p^{s_1+s_2-w}q^{s_2}} \\ \bigg[ L(\chi,w) \sum_{\substack{n \in \Z \\ n\neq 0\\ n \ll (pq)^\theta L^2}} \frac{\widetilde{H_n}(s_1,s_2)}{n^w} \sum_{r(n)} \frac{1}{r^{s_2-w}} e(\frac{d^{\circ}mr+\frac{l_1 l_2m}{d^{\circ}k^2}\overline{r}}{n})\overline{\psi(n)} \sum_{d|n} \frac{\psi(d)}{d^{s_1+s_2-w}} \sum_{e|d} \frac{\mu(e)\chi(e)}{e^w}\\   \sum_{\substack{f=1\\ (f,n/d)=1}} \frac{\mu(f)\chi(f)\psi(f)}{f^{s_1+2s_2-w}} \sum_{\substack{c_1=1\\ (c_1,n/d)=1}}\frac{\psi(c_1)}{c_1^{s_1+s_2-w}} \bigg] dwds_1ds_2. \end{multline}

Invert  the $d$- and $n$-sum to get \begin{multline}\label{eq:ym3}
\frac{\sqrt{pq}\psi(p)\chi(-q)}{(pq)^2}(\frac{1}{2\pi i})^3\int_{(s_1)} \int_{(s_2)}  \int_{(w)}  \frac{\Gamma(w)\Gamma(s_2-w)}{\Gamma(s_2)} \frac{(pq)^{s_1+s_2}}{p^{s_1+s_2-w}q^{s_2}} \\ \bigg[L(\chi,w) \sum_{d=1} \frac{1}{d^{s_1+s_2}} \sum_{e|d} \frac{\mu(e)\chi(e)}{e^w} \sum_{\substack{n \in \Z \\ n\neq 0\\ dn \ll (pq)^\theta L^2}} \frac{\widetilde{H_{dn}}(s_1,s_2)}{n^w} \sum_{r(dn)} \frac{1}{r^{s_2-w}} e(\frac{d^{\circ}mr+\frac{l_1 l_2m}{d^{\circ}k^2}\overline{r}}{dn})\overline{\psi(n)} \\   \sum_{\substack{f=1\\ (f,n)=1}} \frac{\mu(f)\chi(f)\psi(f)}{f^{s_1+2s_2-w}} \sum_{\substack{c_1=1\\ (c_1,n)=1}}\frac{\psi(c_1)}{c_1^{s_1+s_2-w}}  \bigg] dwds_1ds_2. \end{multline}

Similarly, invert the $e$- and $d$-sums to get

\begin{multline}\label{eq:ym4}\frac{\sqrt{pq}\psi(p)\chi(-q)}{(pq)^2}(\frac{1}{2\pi i})^3\int_{(s_1)} \int_{(s_2)}    \int_{(w)}   \frac{\Gamma(w)\Gamma(s_2-w)}{\Gamma(s_2)} \frac{(pq)^{s_1+s_2}}{p^{s_1+s_2-w}q^{s_2}}\\ \bigg[L(\chi,w) \sum_{e=1} \frac{\mu(e)\chi(e)}{e^{w+s_1+s_2}} \sum_{d=1} \frac{1}{d^{s_1+s_2}}  \sum_{\substack{n \in \Z \\ n\neq 0\\ edn \ll (pq)^\theta L^2}} \frac{\widetilde{H_{edn}}(s_1,s_2)}{n^w} \sum_{r(edn)} \frac{1}{r^{s_2-w}} e(\frac{d^{\circ}mr+\frac{l_1 l_2m}{d^{\circ}k^2}\overline{r}}{edn})\overline{\psi(n)} \\   \sum_{\substack{f=1\\ (f,n)=1}} \frac{\mu(f)\chi(f)\psi(f)}{f^{s_1+2s_2-w}} \sum_{\substack{c_1=1\\ (c_1,n)=1}}\frac{\psi(c_1)}{c_1^{s_1+s_2-w}}  \bigg] dwds_1ds_2. \end{multline}

Finally, we remove the relatively prime conditions on the $f$- and $c_1$-sums 

\begin{multline}\label{eq:ym5}\frac{\sqrt{pq}\psi(p)\chi(-q)}{(pq)^2}(\frac{1}{2\pi i})^3\int_{(s_1)} \int_{(s_2)}    \int_{(w)}   \frac{\Gamma(w)\Gamma(s_2-w)}{\Gamma(s_2)} \frac{(pq)^{s_1+s_2}}{p^{s_1+s_2-w}q^{s_2}}\\ \bigg[L(\chi,w) \sum_{e=1} \frac{\mu(e)\chi(e)}{e^{w+s_1+s_2}} \sum_{d=1} \frac{1}{d^{s_1+s_2}} \sum_{a=1} \frac{\mu(a)}{a^{s_1+s_2}} \sum_{b=1}  \frac{\mu^2(b)\chi(b)}{b^{s_1+2s_2}} \sum_{\substack{n \in \Z \\ n\neq 0\\ baedn \ll (pq)^\theta L^2}} \frac{\widetilde{H_{baedn}}(s_1,s_2)}{n^w}\\  \sum_{r(baedn)} \frac{1}{r^{s_2-w}} e(\frac{d^{\circ}mr+\frac{l_1 l_2m}{d^{\circ}k^2}\overline{r}}{baedn})\overline{\psi(n)}    \sum_{f=1} \frac{\mu(f)\chi(f)\psi(f)}{f^{s_1+2s_2-w}} \sum_{c_1=1}\frac{\psi(c_1)}{c_1^{s_1+s_2-w}}  \bigg] dwds_1ds_2. \end{multline}

This can be simplified to \begin{multline}\label{eq:ym6} \frac{\sqrt{pq}\psi(p)\chi(-q)}{(pq)^2}(\frac{1}{2\pi i})^3\int_{(s_1)} \int_{(s_2)}    \int_{(w)}   \frac{\Gamma(w)\Gamma(s_2-w)}{\Gamma(s_2)} \frac{(pq)^{s_1+s_2}}{p^{s_1+s_2-w}q^{s_2}}\\ \frac{L(\chi,w)L(\psi,s_1+s_2-w)}{L(\chi\psi,s_1+2s_2-w)} \bigg[\sum_{e=1} \frac{\mu(e)\chi(e)}{e^{w+s_1+s_2}} \sum_{d=1} \frac{1}{d^{s_1+s_2}} \sum_{a=1} \frac{\mu(a)}{a^{s_1+s_2}} \sum_{b=1}  \frac{\mu^2(b)\chi(b)}{b^{s_1+2s_2}} \\ \sum_{\substack{n \in \Z \\ n\neq 0\\ baedn \ll (pq)^\theta L^2}} \frac{\widetilde{H_{baedn}}(s_1,s_2)}{n^w} \sum_{r(baedn)} \frac{1}{r^{s_2-w}} e(\frac{d^{\circ}mr+\frac{l_1 l_2m}{d^{\circ}k^2}\overline{r}}{baedn})\overline{\psi(n)} \bigg] dwds_1ds_2. \end{multline}

\end{proof}

Remember $\Re(s_1)=2,$ $\Re(s_2)=1/2+\delta,$ and $\Re(w)=1/2$ with $\delta>0$ small. We pick up a main term by shifting $s_2$ to the line $\Re(s_2)=1/2-\delta$ with a residue term   \begin{multline}\label{eq:ymr}\frac{\sqrt{pq}\psi(p)\chi(-q)}{(pq)^2}(\frac{1}{2\pi i})^2\int_{(s_1)=2} \int_{(s_2)=1/2}       \frac{(pq)^{s_1+s_2}}{p^{s_1}q^{s_2}}\\ \frac{L(\chi,s_2)L(\psi,s_1)}{L(\chi\psi,s_1+s_2)} \bigg[\sum_{e=1} \frac{\mu(e)\chi(e)}{e^{s_1+2s_2}} \sum_{d=1} \frac{1}{d^{s_1+s_2}} \sum_{a=1} \frac{\mu(a)}{a^{s_1+s_2}} \sum_{b=1}  \frac{\mu^2(b)\chi(b)}{b^{s_1+2s_2}} \\ \sum_{\substack{n \in \Z \\ n\neq 0\\ baedn \ll (pq)^\theta L^2}} \frac{\widetilde{H_{baedn}}(s_1,s_2)}{n^{s_2}} \sum_{r(baedn)} e(\frac{d^{\circ}mr+\frac{l_1 l_2m}{d^{\circ}k^2}\overline{r}}{baedn})\overline{\psi(n)} \bigg] ds_1ds_2, \end{multline}

plus a remainder term  \begin{multline}\label{eq:yrem} \frac{\sqrt{pq}\psi(p)\chi(-q)}{(pq)^2}(\frac{1}{2\pi i})^3\int_{(s_1)=2} \int_{(s_2)=1/2-\delta}    \int_{(w)=1/2}  \frac{\Gamma(w)\Gamma(s_2-w)}{\Gamma(s_2)} \frac{(pq)^{s_1+s_2}}{p^{s_1+s_2-w}q^{s_2}}\\ \frac{L(\chi,w)L(\psi,s_1+s_2-w)}{L(\chi\psi,s_1+2s_2-w)} \bigg[\sum_{e=1} \frac{\mu(e)\chi(e)}{e^{w+s_1+s_2}} \sum_{d=1} \frac{1}{d^{s_1+s_2}} \sum_{a=1} \frac{\mu(a)}{a^{s_1+s_2}} \sum_{b=1}  \frac{\mu^2(b)\chi(b)}{b^{s_1+2s_2}} \\ \sum_{\substack{n \in \Z \\ n\neq 0\\ baedn \ll (pq)^\theta L^2}} \frac{\widetilde{H_{baedn}}(s_1,s_2)}{n^w} \sum_{r(baedn)} \frac{1}{r^{s_2-w}} e(\frac{d^{\circ}mr+\frac{l_1 l_2m}{d^{\circ}k^2}\overline{r}}{baedn})\overline{\psi(n)} \bigg] dwds_1ds_2. \end{multline}

\begin{remark}

The ``remainder" term is actually bigger then the ``main" residual term, but only by a fixed but arbitrarily small amount. We expect that this ``remainder" term via some kind of symmetry (perhaps a functional equation) is smaller than the ``main" term. We hope that something similar to the analysis in getting the main term of \cite{So} will work in this case. Regardless, we focus on the ``main" term first. 

\end{remark}

\subsection{Including the $m$-sum}\label{myoy}
Now we want to include the $m$-sum from \eqref{eq:kk} into \eqref{eq:ymr} and deal with the remainder later in Section \ref{remb}. The term to consider is \begin{multline}\label{eq:ywm} \frac{\sqrt{pq}  \psi(p)\chi(-q)}{(pq)^2} \sum_{m\geq 1} \frac{1}{m^{1/2}}F'_{M'}(\frac{d^{\circ}m}{pq})
(\frac{1}{2\pi i})^3\int_{(s_1)=2} \int_{(s_2)=1/2}      \frac{(pq)^{s_1+s_2}}{p^{s_1}q^{s_2}}\\ \frac{L(\chi,s_2)L(\psi,s_1)}{L(\chi\psi,s_1+s_2)} \bigg[\sum_{e=1} \frac{\mu(e)\chi(e)}{e^{s_1+2s_2}} \sum_{d=1} \frac{1}{d^{s_1+s_2}} \sum_{a=1} \frac{\mu(a)}{a^{s_1+s_2}} \sum_{b=1}  \frac{\mu^2(b)\chi(b)}{b^{s_1+2s_2}} \\ \sum_{\substack{n \in \Z \\ n\neq 0\\ baedn \ll (pq)^\theta L^2}} \frac{\widetilde{H_{baedn}}(s_1,s_2)}{n^{s_2}} \sum_{r(baedn)} e(\frac{d^{\circ}mr+\frac{l_1 l_2m}{d^{\circ}k^2}\overline{r}}{baedn})\overline{\psi(n)} \bigg] ds_1ds_2. \end{multline}

 Recall $\widetilde{H_{baedn}}(s_1,s_2)$ depends on $m.$ Specifically, \begin{equation}\label{eq:hst} \widetilde{H_\mu}(s_1,s_2)=\int_0^\infty \int_0^\infty e(\frac{x\dd m}{\mu y}+\frac{y\frac{l_1 l_2m}{\dd k^2}}{\mu x})\int_{-\infty}^{\infty}
e(\frac{t\mu }{xy})F_{M}(t)
 V(\frac{4\pi
\sqrt{t\dd m}}{\sqrt{pq}x})  W(\frac{4\pi \sqrt{t\frac{l_1 l_2m}{\dd k^2}}}{\sqrt{pq}y})x^{s_1-1}y^{s_2-1}\frac{dt}{t^{1/2}}dxdy.\end{equation}

Make a change of variables $x \to \frac{\sqrt{m\dd}x}{\sqrt{pq}}$ and $y \to \frac{\sqrt{m\frac{l_1l_2}{\dd k^2}}y}{\sqrt{pq}}$ to get $\widetilde{H_\mu}(s_1,s_2)$ equal to \begin{multline}\label{eq:hst0} (\frac{l_1l_2m}{k^2})^{\frac{s_1+s_2}{2}} \frac{1}{(pq)^{\frac{s_1+s_2}{2}} } \int_0^\infty \int_0^\infty e(\frac{x{\dd}^2 k m}
 {\sqrt{l_1l_2}hy}+\frac{y(l_1 l_2)^{3/2}m}{{\dd}^2 k^3hx}) \\ \int_{-\infty}^{\infty}
e(\frac{t\mu k }{\sqrt{l_1l_2}mxy})F_{M}(t)
 V(\frac{4\pi
\sqrt{t }}{x})  W(\frac{4\pi \sqrt{t}}{y})x^{s_1-1}y^{s_2-1}\frac{dt}{t^{1/2}}dxdy.\end{multline}

To simplify notation and to focus on the $m$-variable define \begin{multline} A_{s_1,s_2}(z):=z^{\frac{s_1+s_2}{2}}\int_0^\infty \int_0^\infty e(\frac{x{\dd}^2 k z}
 {\sqrt{l_1l_2}baedny}+\frac{y(l_1 l_2)^{3/2}z}{{\dd}^2 k^3baednx})\times \\ \int_{-\infty}^{\infty}
e(\frac{tbaednk}{\sqrt{l_1l_2}zxy})F_{M}(t)
 V(\frac{4\pi
\sqrt{t }}{x})  W(\frac{4\pi \sqrt{t}}{y})x^{s_1-1}y^{s_2-1}\frac{dt}{t^{1/2}}dxdy.  \end{multline}

 We interchange the integrals and the $e,d,a,b$-sums with the $m$-sum. This is ok as the $m$-sum is finite. So for the bracketed term of \eqref{eq:ywm} along with $A_{s_1,s_2}(m)$ (which is just $\widetilde{H_n}(s_1,s_2)$ relabelled)  and the $m$-sum we have 
 \begin{multline}\label{eq:msum8}
\sum_{e=1} \frac{\mu(e)\chi(e)}{e^{s_1+2s_2}} \sum_{d=1} \frac{1}{d^{s_1+s_2}} \sum_{a=1} \frac{\mu(a)}{a^{s_1+s_2}} \sum_{b=1}  \frac{\mu^2(b)\chi(b)}{b^{s_1+2s_2}} \\ \sum_{\substack{n \in \Z \\ n\neq 0\\ baedn \ll (pq)^\theta L^2}} \frac{1}{n^{s_2}}  \sum_{m\geq 1} \frac{1}{m^{1/2}}F'_{M'}(\frac{\dd m}{pq})  \sum_{r(baedn)} e(\frac{\dd mr+\frac{l_1 l_2m}{\dd k^2}\overline{r}}{baedn})A_{s_1,s_2}(m).
\end{multline}

Performing Poisson summation in $m$ modulo $baedn$ gives \begin{equation}
\label{eq:msum18}
\frac{1}{baedn}\sum_m \nu_{\dd}(baedn,m,\frac{l_1 l_2}{\dd k^2})\int_{-\infty}^\infty F'_{M'}(\frac{\dd v}{pq})A_{s_1,s_2}(v)e(\frac{-mv}{baedn})\frac{d v}{v^{1/2}},
\end{equation}
where $\nu_b(n,m,a):=\#\{x: ax^2-mx+b\equiv 0(n)\}.$

Following \cite{R}, $\nu_b(n,m,a)$ is multiplicative in $n,$ and we can reduce the study of it to prime power modulus $p^k.$ Suppose first we take $n=p,$ $p$ odd and $(a,p)=1,$ then completing the square $$\nu_b(p,m,a)=\#\{x: x^2\equiv  (m\overline{2a})^2-b\overline{a}(p)\}.$$ Again as in \cite{R} using Hensel's lemma gives $\nu_b(p^k,m,a)=\nu_b(p,m,q)$ for $k>1.$ Then its clear \begin{equation}\label{eq:rru} \nu_b(p,m,a)=1+\left(\frac{m^2-4ba}{p}\right)=\left\{ \begin{array}{ll}
      2 & \text{if } m^2-4ba\equiv z^2(p)  \\
        0 & \text{otherwise} .\end{array} \right. \end{equation}
Using multiplicativity, we have \begin{equation}\label{eq:cmnu} \nu_{\dd}(\alpha\beta,m,\frac{l_1 l_2}{\dd k^2})=\nu_{\dd}(\mu^2(\alpha\beta),m,\frac{l_1 l_2}{\dd k^2})=\nu_{\dd}(\mu^2(\alpha),m,\frac{l_1 l_2}{\dd k^2})\nu_{\dd}(\mu^2(\beta),m,\frac{l_1 l_2}{\dd k^2}),\end{equation} where $\mu$ is the Mobius function. The last equality implies we can extend this result to square free modulus for $ \nu_{\dd}(baedn,m,\frac{l_1 l_2}{\dd k^2}).$ 

\subsubsection{Isolating the $b,a,e,d,n$-sums}

In order to isolate these sums, we open up $A_{s_1,s_2}(v)$ and focus on the terms containing $b,a,e,d,$ and $n.$ Let \begin{multline}\label{eq:ttt} T_{m,v}(c):=e(\frac{-mv}{c}) \int_0^\infty \int_0^\infty e(\frac{x{\dd}^2 k v}
 {\sqrt{l_1l_2}cy}+\frac{y(l_1 l_2)^{3/2}v}{{\dd}^2 k^3cx}) \\ \int_{-\infty}^{\infty}
e(\frac{tc k }{\sqrt{l_1l_2}vxy})F_{M}(t)
 V(\frac{4\pi
\sqrt{t }}{x})  W(\frac{4\pi \sqrt{t}}{y})x^{s_1-1}y^{s_2-1}\frac{dt}{t^{1/2}}dxdy.\end{multline}

Putting \eqref{eq:ttt} back into \eqref{eq:msum8}, using the multiplicativity of \eqref{eq:cmnu} above, and interchanging the $m$-sum (after Poisson summation) with the other sums gives \begin{multline}\label{eq:aftp}\sum_m \int_{-\infty}^\infty F'_{M'}(\frac{\dd v}{pq})\bigg[ \sum_{e=1} \frac{\mu(e)\chi(e)\nu_{\dd}(e,m,\frac{l_1 l_2}{\dd k^2})}{e^{s_1+2s_2+1}} \sum_{d=1} \frac{\nu_{\dd}(d,m,\frac{l_1 l_2}{\dd k^2})}{d^{s_1+s_2+1}} \sum_{a=1} \frac{\mu(a)\nu_{\dd}(a,m,\frac{l_1 l_2}{\dd k^2})}{a^{s_1+s_2+1}} \\ \sum_{b=1}  \frac{\mu^2(b)\chi(b)\nu_{\dd}(b,m,\frac{l_1 l_2}{\dd k^2})}{b^{s_1+2s_2+1}} \sum_{\substack{n \in \Z \\ n\neq 0\\ baedn \ll (pq)^\theta L^2}} \frac{\overline{\psi(n)} \nu_{\dd}(n,m,\frac{l_1 l_2}{\dd k^2})T_{m,v}(baedn)}{n^{s_2+1}} \bigg] v^{\frac{s_1+s_2-1}{2}} d v.\end{multline}

 To completely isolate the $b,a,e,d,n$-sums in the archimedean functions requires applying Mellin inversion to $T_{m,v}(c).$  It is easy to check by integration by parts in the $t$-variable that $$T_{m,v}(c) \ll \frac{1}{(1+|\Im(c)|)^N}$$ for any $N>0.$  The variable of the Mellin inversion is $w$ and we integrate on the line $\Re(w)=\epsilon,$ for $\epsilon >0$ small. Using the above estimate on $T_{m,v}(c)$ we move the $e,d,a,b,n$-sums inside of the $w$-integral to get
 
 \begin{multline}\label{eq:aftp0}\sum_m \int_{-\infty}^\infty F'_{M'}(\frac{\dd v}{pq})\bigg(\frac{1}{2\pi i} \int_{(w)=\epsilon}\widetilde{T_{m,v}}(w) \bigg[ \sum_{e=1} \frac{\mu(e)\chi(e)\nu_{\dd}(e,m,\frac{l_1 l_2}{\dd k^2})}{e^{s_1+2s_2+1+w}} \sum_{d=1} \frac{\nu_{\dd}(d,m,\frac{l_1 l_2}{\dd k^2})}{d^{s_1+s_2+1+w}}\\  \sum_{a=1} \frac{\mu(a)\nu_{\dd}(a,m,\frac{l_1 l_2}{\dd k^2})}{a^{s_1+s_2+1+w}}  \sum_{b=1}  \frac{\mu^2(b)\chi(b)\nu_{\dd}(b,m,\frac{l_1 l_2}{\dd k^2})}{b^{s_1+2s_2+1+w}} \sum_{\substack{n \in \Z \\ n\neq 0\\ baedn \ll (pq)^\theta L^2}} \frac{\overline{\psi(n)} \nu_{\dd}(n,m,\frac{l_1 l_2}{\dd k^2})}{n^{s_2+1+w}} \bigg] dw \bigg) v^{\frac{s_1+s_2-1}{2}} d v.\end{multline}



Let $\Delta:=m^2-4\frac{l_1l_2}{k^2},$ and $S:=\{\mathfrak{p} \text{ prime} 
: \mathfrak{p} \mid 2q\Delta \}.$ As the usual notation for primes is $``p",$ we use {\it mathfrak} for primes of our Euler product below. Hopefully this does not cause too much confusion. Either way, this possible confusion of notation ends at in this section. Following \cite{R}, for a fixed prime $\mathfrak{p} \notin S,$ we have \begin{equation}\label{eq:lll} 1+\sum_{k=1} ^\infty \frac{\nu_{\dd}(\mathfrak{p},m,\frac{l_1 l_2}{\dd k^2})\overline{\psi}(\mathfrak{p}^k)}{\mathfrak{p}^{kr}}=1+ \frac{(1+(\frac{\Delta}{\mathfrak{p}}))\frac{\overline{\psi}(\mathfrak{p})}{\mathfrak{p}^{r}}}{(1-\frac{\overline{\psi}(\mathfrak{p})}{\mathfrak{p}^{r}})}= \frac{(1+\frac{\overline{\psi}(\mathfrak{p})(\frac{\Delta}{\mathfrak{p}}))}{\mathfrak{p}^r})}{(1-\frac{\overline{\psi}(\mathfrak{p})}{\mathfrak{p}^r})}.\end{equation}

Now look at the individual sums in \eqref{eq:aftp0}. Note with a negligible error we can remove the restriction $$ baedn \ll (pq)^\theta L^2.$$ Therefore, the $e$-sum following what we did for \eqref{eq:lll} can be written as an Euler product  equaling  \begin{equation}\label{eq:es} \prod_{\mathfrak{p}\notin S} \frac{(1-\frac{\chi(\mathfrak{p})}{\mathfrak{p}^{s_1+2s_2+w}})}{(1+\frac{\chi(\mathfrak{p})(\frac{\Delta}{\mathfrak{p}}))}{\mathfrak{p}^{s_1+2s_2+w}})}  \prod_{\mathfrak{p} \in S}\frac{1}{L_{\mathfrak{p},\Delta}(\chi, s_1+2s_2+w)}.\end{equation} Here $L_{\mathfrak{p},\Delta}(\chi, s)$ we do not make specific, but note it is bounded in the range $\Re(s)\geq1$ by $ (\Delta pq)^{\epsilon}$ using \eqref{eq:con}.

Notice the $a$-sum and $d$-sum cancel by standard Euler product arguments. 

The $n$-sum equals \begin{equation}\label{eq:ns} \prod_{\mathfrak{p}\notin S} \frac{(1+\frac{\overline{\psi}(\mathfrak{p})(\frac{\Delta}{\mathfrak{p}}))}{\mathfrak{p}^{s_2+1+w}})}{(1-\frac{\overline{\psi}(\mathfrak{p})}{\mathfrak{p}^{s_2+1+w}})}  \prod_{\mathfrak{p} \in S} L_{\mathfrak{p},\Delta}(\overline{\psi}, s_2+1+w).\end{equation}

The $b$-sum is more difficult and its Euler product equals \begin{equation}\label{eq:bs}\prod_{\mathfrak{p} \notin S} \frac{(1-\frac{\chi^2(\mathfrak{p})(1+(\frac{\Delta}{\mathfrak{p}}))^2}{\mathfrak{p}^{2s_1+4s_2+1+2w}})}{(1-\frac{\chi(\mathfrak{p})(1+(\frac{\Delta}{\mathfrak{p}}))}{\mathfrak{p}^{s_1+2s_2+1+w}})}\prod_{\mathfrak{p} \in S} B_{\mathfrak{p},\Delta}(s_1+2s_2+1+w)=:B(s_1+2s_2+1+w).\end{equation} Here we do not explicate $B_{\mathfrak{p},\Delta}(s)$ but analogous analysis to that of the $e$-sum we have $$\prod_{\mathfrak{p} \in S} B_{\mathfrak{p},\Delta}(s) \ll  (\Delta pq)^{\epsilon}$$ for $\Re(s)\geq 1.$

The analytic behavior of the $e$-sum \eqref{eq:es} is the same as  $$\frac{L(s_1+2s_2+1+w,(\frac{\Delta}{\cdot})\chi)}{L(s_1+2s_2+1+w,\chi)}$$ up to the factors $p \in S.$ Likewise, the $n$-sum \eqref{eq:ns} is the same as $$\frac{L(s_2+1+w,\overline{\psi})}{L(s_2+1+w,(\frac{\Delta}{\cdot})\overline{\psi}))}$$ and the $b$-sum is absolutely convergent for $\Re(s_1+2s_2+w) >1.$ We will not address further the analytic structure of the $b$-sum. Label the $b$-sum $B(s).$

We consolidate what we have from Proposition \ref{bigp} and the Poisson summation on the $m$-sum getting  \begin{multline}\label{eq:res} \frac{\sqrt{pq}  \psi(p)\chi(-q)}{(pq)^2}
\sum_m  (\frac{1}{2\pi i})^2 \int_{(s_1)=2} \int_{(s_2)=1/2}   (\frac{l_1l_2}{k^2})^{\frac{s_1+s_2}{2}}\int_{(w)=\epsilon}   \int_{-\infty}^\infty F'_{M'}(\frac{\dd v}{pq}) \widetilde{T_{m,v}}(w) \frac{1}{(pq)^{\frac{s_1+s_2}{2}}} \frac{(pq)^{s_1+s_2}}{p^{s_1}q^{s_2}}\\ \frac{L(s_2,\chi)L(s_1,\psi)}{L(s_1+s_2,\chi\psi)} \frac{L(s_1+2s_2+1+w,(\frac{\Delta}{\cdot})\chi)}{L(s_1+2s_2+1+w,\chi)} \frac{L(s_2+1+w,\overline{\psi})}{L(s_2+1+w,(\frac{\Delta}{\cdot})\overline{\psi}))} B(s_1+2s_2+1+w) v^{\frac{s_1+s_2-1}{2}} d v dw ds_1ds_2\\ +O((pq)^{-A}). \end{multline}

 The finite product of $L_{\mathfrak{p},\Delta}(s)$ for $\mathfrak{p} \in S$ in the above cases do not affect convergence issues. In other words, we can shift the contour of $s_1,s_2$ in \eqref{eq:ywm} by looking at these quotients of L-functions above. 

We aim to bound \eqref{eq:res} now.

\subsection{Bounding the $v$-integral} \label{vil}

The $v$-integral is \begin{multline}\label{eq:vint}
 \int_{-\infty}^\infty F'_{M'}(\frac{\dd v}{pq}) \widetilde{T_{m,v}}(w)v^{\frac{s_1+s_2-1}{2}} d v=\int_0^\infty \int_{-\infty}^\infty F'_{M'}(\frac{\dd v}{pq}) e(\frac{-mv}{h}) \int_0^\infty \int_0^\infty e(\frac{x{\dd}^2 k v}
 {\sqrt{l_1l_2}hy}+\frac{y(l_1 l_2)^{3/2}v}{{\dd}^2 k^3hx}) \\ \int_{-\infty}^{\infty}
e(\frac{th k }{\sqrt{l_1l_2}vxy})F_{M}(t)
 V(\frac{4\pi
\sqrt{t }}{x})  W(\frac{4\pi \sqrt{t}}{y})\frac{dt}{t^{1/2}} x^{s_1-1}y^{s_2-1}dxdy h^{w-1}dh v^{\frac{s_1+s_2-1}{2}} d v.
\end{multline}

Change variables $h \to hv,$ $x \to \sqrt{t}x$ and $y \to \sqrt{t}y$ we can write \eqref{eq:vint} as \begin{multline}\label{eq:vint0}
 \int_{-\infty}^\infty F'_{M'}(\frac{\dd v}{pq}) \widetilde{T_{m,v}}(w)v^{\frac{s_1+s_2}{2}} d v=\bigg[\int_0^\infty F_{M}(t)t^{\frac{s_1+s_2-1}{2}}dt\bigg] \left[\int_{-\infty}^\infty F'_{M'}(\frac{\dd v}{pq})v^{\frac{s_1+s_2+2w-1}{2}} d v\right] \times \\ \int_0^\infty \int_0^\infty  V(\frac{4\pi}{x})  W(\frac{4\pi}{y}) \bigg( \int_0^{\infty}  e(\frac{-m}{h})  e(\frac{x{\dd}^2 k }
 {\sqrt{l_1l_2}hy}+\frac{y(l_1 l_2)^{3/2}}{{\dd}^2 k^3hx}) 
e(\frac{h k }{\sqrt{l_1l_2}xy})  h^{w-1}dh\bigg) x^{s_1-1}y^{s_2-1}dxdy .
\end{multline}

Note that the $v$-integral is now bounded by $(pq)^{\epsilon}$ for any $\epsilon>0$ with $s_1=s_2=1/2.$ The $x,y,$ and $t$ integrals are bounded and so we focus on the $h$-integral. 

We completely isolate the test functions $F_M,F'_{M'}$ by changing $v \to \frac{pqv}{\dd}$ getting \eqref{eq:vint0} equaling  \begin{multline}\label{eq:vint00}
(\frac{pq}{\dd})^{\frac{s_1+s_2+2w+1}{2}} \bigg[\int_0^\infty F_{M}(t)t^{\frac{s_1+s_2-1}{2}}dt\bigg] \left[\int_{-\infty}^\infty F'_{M'}(v)v^{\frac{s_1+s_2+2w-1}{2}} d v\right] \int_0^\infty \int_0^\infty  V(\frac{4\pi}{x})  W(\frac{4\pi}{y}) \\ \bigg( \int_0^{\infty}  e(\frac{-m}{h})  e(\frac{x{\dd}^2 k }
 {\sqrt{l_1l_2}hy}+\frac{y(l_1 l_2)^{3/2}}{{\dd}^2 k^3hx}) 
e(\frac{h k }{\sqrt{l_1l_2}xy})  h^{w-1}dh\bigg) x^{s_1-1}y^{s_2-1}dxdy .
\end{multline}
\begin{remark}
So note the choice of $F_M,F'_{M'}$ from the approximate functional equation in \eqref{eq:approx} is independent of the analysis needed in studying the subconvexity problem for the Rankin-Selberg L-function. Therefore if we want, we can bring the $M$-sum back in and remove the partition of unity. 
\end{remark}

We will eventually need an estimate for the $m$-sum to converge. Changing variables from $h \to h^{-1}$ and integrating by parts $j$-times easily gives the bound for the $h$-integral \begin{equation}\label{eq:mconv}
\int_0^{\infty}  e(\frac{-m}{h})  e(\frac{x{\dd}^2 k }
 {\sqrt{l_1l_2}hy}+\frac{y(l_1 l_2)^{3/2}}{{\dd}^2 k^3hx}) 
e(\frac{h k }{\sqrt{l_1l_2}xy})  h^{w-1}dh \ll \left(\frac{(l_1l_2)^{3/2}}{{\dd}^2 k m}\right)^j.
\end{equation}

Define $$A:=A(x,y,\dd,k,l_1,l_2):=\left(\frac{x{\dd}^2 k }
 {\sqrt{l_1l_2}y}+\frac{y(l_1 l_2)^{3/2}}{{\dd}^2 k^3x}-m\right)$$ and $$B:=B(x,y,k,l_1,l_2):=\frac{k}{\sqrt{l_1l_2}xy}.$$

We need to bound the integral, \begin{equation}\label{eq:bess}
\int_0^\infty e(\frac{A}{h}+Bh)h^{w-1}dh \end{equation} in terms of $pq.$ We write the $h$-integral in terms of Bessel functions by recalling the following identities from \cite{IK}, \begin{equation}
\int_0^\infty \cos(\frac{x}{2}(y+\frac{1}{y}))y^{s-1}dy=-\pi J_s(x)\sin(\pi s/2)-\pi Y_s(x)\cos(\pi s/2)=-\pi\frac{J_s(x)-J_{-s}(x)}{2\sin(\pi s/2)},
\end{equation}
and
\begin{equation}
\int_0^\infty \sin(\frac{x}{2}(y+\frac{1}{y}))y^{s-1}dy=-\pi J_s(x)\cos(\pi s/2)-\pi Y_s(x)\sin(\pi s/2)=-\pi\frac{J_s(x)+J_{-s}(x)}{2\cos(\pi s/2)}.
\end{equation}

So \begin{multline}\label{eq:bess1}
\int_0^\infty e(\frac{A}{h}+Bh)h^{w-1}dh=-\pi (\frac{A}{B})^{w/2}\bigg[\frac{J_w(4\pi\sqrt{\frac{A}{B}})-J_{-w}(4\pi\sqrt{\frac{A}{B}})}{2\sin(\pi w/2)} +\frac{J_w(4\pi\sqrt{\frac{A}{B}})+J_{-w}(4\pi\sqrt{\frac{A}{B}})}{2\cos(\pi w/2)}\bigg].
\end{multline}

Define \begin{equation}\label{eq:ccc} C(s_1,s_2,w):=\frac{L(s_2,\chi)L(s_1,\psi)}{L(s_1+s_2,\chi\psi)} \frac{L(s_1+2s_2+1+w,(\frac{\Delta}{\cdot})\chi)}{L(s_1+2s_2+1+w,\chi)} \frac{L(s_2+1+w,\overline{\psi})}{L(s_2+1+w,(\frac{\Delta}{\cdot})\overline{\psi}))} B(s_1+2s_2+1+w)\end{equation} as well as \begin{multline}\label{eq:ddd}D_{\frac{A}{B}}(s_1,s_2,w)= -\pi  \int_0^\infty \int_0^\infty V(\frac{4\pi}{x})  W(\frac{4\pi }{y}) \\  (\frac{A}{B})^{w/2}\bigg[\frac{J_w(4\pi\sqrt{\frac{A}{B}})-J_{-w}(4\pi\sqrt{\frac{A}{B}})}{2\sin(\pi w/2)} +\frac{J_w(4\pi\sqrt{\frac{A}{B}})+J_{-w}(4\pi\sqrt{\frac{A}{B}})}{2\cos(\pi w/2)}\bigg] x^{s_1-1}y^{s_2-1}dxdy.\end{multline}

 Then incorporating \eqref{eq:ccc} and \eqref{eq:ddd} into \eqref{eq:res} gives 

 \begin{multline}\label{eq:res0} \frac{\psi(p)\chi(-q)}{pq}
\sum_m  (\frac{1}{2\pi i})^2 \int_{(s_1)=2} \int_{(s_2)=1/2}  \frac{(pq)^{s_1+s_2}}{p^{s_1}q^{s_2}}   (\frac{l_1l_2}{k^2})^{\frac{s_1+s_2}{2}}\times \\ \int_{(w)=\epsilon} (\frac{pq}{\dd})^w \bigg[\int_0^\infty F_{M}(t)t^{\frac{s_1+s_2-1}{2}}dt\bigg]  \left[\int_{-\infty}^\infty F'_{M'}(v)v^{\frac{s_1+s_2+2w-1}{2}} d v\right]\times \\ D_{\frac{A}{B}}(s_1,s_2,w)C(s_1,s_2,w) dw ds_1ds_2+ O((pq)^{-A}). \end{multline} We now make a contour shift on the line $\Re(s_1)=2$ to $\Re(s_1)=1/2.$ We encounter no poles in this shift.

Now write $s_1=1/2+it_1,s_2=1/2+it_2,$ and $w=iy.$ We now bound the $w$-integral.  First $$C(1/2+it_1,1/2+it_2,iy) \ll (|t_1|p)^{\theta_1} (|t_2|q)^{\theta_2} (|t_1t_2y|pq)^{\epsilon},$$ where $\theta_1,\theta_2$ are parameters $\leq 1/4.$  Here we are using  \begin{equation}\label{eq:con}(\frac{1}{|t|q})^{\epsilon} \ll L(\sigma+it,\chi) \ll (|t|q)^{\epsilon},\end{equation} for $\chi$ a character of modulus $q, \sigma\geq 1$ and $t \in \R.$ We also are using the (sub)convexity bound $$L(1/2+it,\chi) \ll (|t|q)^{\theta+\epsilon}$$ with $\theta \leq 1/4$ \cite{IK}.

Second, we bound \begin{equation} D_{\frac{A}{B}}(1/2+it_1,1/2+it_2,iy) .\end{equation} 

\begin{prop}\label{dyo}

\begin{equation}  D_{\frac{A}{B}}(1/2+it_1,1/2+it_2,i\gamma)  \ll (t_1t_2\gamma \frac{A}{B})^{\epsilon} \end{equation}

\end{prop}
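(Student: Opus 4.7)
Write $s_1=1/2+it_1$, $s_2=1/2+it_2$, $w=i\gamma$. The plan is first to strip off all the trivially bounded factors in the integrand of \eqref{eq:ddd}, reducing the proposition to a uniform pointwise bound on the Bessel-type kernel
\[
K_w(z) \;:=\; \frac{J_w(z)-J_{-w}(z)}{2\sin(\pi w/2)} \;+\; \frac{J_w(z)+J_{-w}(z)}{2\cos(\pi w/2)},
\]
evaluated at $z=4\pi\sqrt{A/B}$. These trivial reductions are as follows: the compact support of $V$ and $W$ confines $(x,y)$ to a fixed compact rectangle in $(0,\infty)^2$ on which $|x^{s_1-1}y^{s_2-1}| = x^{-1/2}y^{-1/2}$ is bounded; the modulus $|(A/B)^{w/2}|=1$ since $w$ is purely imaginary; and the target bound permits $\epsilon$-growth in $|t_1|,|t_2|$, so no integration by parts is needed against the phases $x^{it_1},y^{it_2}$.

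For the kernel $K_w$ with $|\gamma|$ bounded, I split at $z=1$. For $z \leq 1$, I would use the power series $J_\nu(z)=\sum_k (-1)^k(z/2)^{\nu+2k}/(k!\,\Gamma(\nu+k+1))$ combined with Taylor expansion in $\nu$: the apparent simple pole of $1/\sin(\pi w/2)$ at $w=0$ is cancelled by the vanishing of $J_w - J_{-w}$, so each summand is entire in $w$, smooth in $z$, with at worst a $\log z$ singularity as $z\to 0^+$. For $z \geq 1$, the Hankel asymptotic $J_\nu(z) = \sqrt{2/(\pi z)}\cos(z-\nu\pi/2-\pi/4) + O(z^{-3/2})$ gives, after trigonometric simplification, $K_w(z) = \sqrt{2/(\pi z)}\bigl[\sin z + \cos z\bigr] + O(z^{-3/2})$, and hence $|K_w(z)|\ll z^{-1/2}$. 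Together these yield $|K_w(z)| \ll (1+z)^\epsilon$ for $|\gamma|$ bounded.

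The main obstacle is uniformity in $|\gamma|$. The issue is that $|J_{\pm i\gamma}(z)|$ can grow exponentially in $|\gamma|$, but $|\sin(\pi i\gamma/2)|=\sinh(\pi|\gamma|/2)$ and $|\cos(\pi i\gamma/2)|=\cosh(\pi|\gamma|/2)$ do so at the same rate, so the exponential factors must cancel in the ratios appearing in $K_{i\gamma}$. To make this precise I would use the integral representations of $J_{\pm i\gamma}(z)$, or equivalent uniform-in-order bounds from Chapters 7--8 of \cite{Wat}, yielding $|K_{i\gamma}(z)| \ll (1+|\gamma|)^{C}(1+z)^{-1/2}$ for some fixed $C>0$. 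Absorbing the polynomial in $|\gamma|$ into an $\epsilon$-power and integrating the resulting pointwise bound against the bounded integrand over the compact $(x,y)$-support gives
\[
|D_{A/B}(1/2+it_1,\,1/2+it_2,\,i\gamma)| \;\ll\; (|\gamma|\,A/B)^{\epsilon} \;\leq\; (t_1 t_2 \gamma\, A/B)^{\epsilon},
\]
which is the claim.
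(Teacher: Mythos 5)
Your reduction to a pointwise bound on the kernel $K_{i\gamma}(z)$ at $z = 4\pi\sqrt{A/B}$, and your observation that the exponential growth of $J_{\pm i\gamma}$ must be beaten by the $\sinh(\pi\gamma/2)$, $\cosh(\pi\gamma/2)$ in the denominators, match the structure of the paper's argument. But the step that carries the whole proposition is exactly the uniform-in-$\gamma$ estimate, and here you assert rather than prove: you write that ``integral representations of $J_{\pm i\gamma}$, or equivalent uniform-in-order bounds from Chapters 7--8 of \cite{Wat}'' would give $|K_{i\gamma}(z)| \ll (1+|\gamma|)^C(1+z)^{-1/2}$. That sentence is the entire content of the claim and cannot stand as a citation, so as written there is a gap at the center of the argument.

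Your split at $z=1$ is the relevant dichotomy only for bounded $\gamma$; for large order the asymptotics of $J_{i\gamma}(z)$ change character across the turning point $z\asymp|\gamma|$, and that is precisely the split the paper makes. For $|\gamma|\ge T(x,y)$ it uses the order-dominated (Debye--Erd\'elyi) expansion \eqref{eq:bigarg}, whose exponential prefactor cancels against the hyperbolic denominator leaving a contribution decaying like $|w|^{-1/2}$; for $|\gamma|\le T(x,y)$ it uses Watson's $W_1,W_2$ decomposition in the oscillatory regime to extract $T^{-1/2}\ll|\gamma|^{-1/4}$; and $w=0$ is handled separately via $Y_0,J_0$ as a limit. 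Note also that a factor $(1+|\gamma|)^C$ with fixed $C>0$ cannot be ``absorbed into an $\epsilon$-power of $\gamma$'' for arbitrarily small $\epsilon$; the paper in fact obtains \emph{decay} in $|\gamma|$, which is what is needed for the $w$-integral in \eqref{eq:res0} to converge.
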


\begin{proof}

As the $x,y$ integrals are bounded, we first look at the case $w=i\gamma \neq 0,$ it is sufficient to look at \begin{equation}\label{eq:xbes}
(\frac{A}{B})^{i\gamma/2} \frac{J_{2i\gamma}(\frac{4\pi\sqrt{A}}{\sqrt{B}})}{\sin(\pi i\gamma/2)}  \ll \frac{J_{2i\gamma}(\frac{4\pi\sqrt{A}}{\sqrt{B}})}{\sinh(\pi \gamma/2)}.  \end{equation}  

Let $T(x,y)= \frac{4\pi\sqrt{A}}{\sqrt{B}}.$
Fix $w=i\gamma\neq 0,$ then we break the analysis into two cases: $|\gamma| \geq T(x,y)$ and $|\gamma| \leq T(x,y).$ 

First we study the Bessel asymptotic for $T(x,y)$ smaller than the index. We need the behavior for $J_{2i\gamma}(T)$ for $\gamma \geq c T(x,y)$ and $c$ any positive number. Let $z=\sqrt{T^2+\gamma^2},$ then from (\cite{E},pg.87) \begin{multline}\label{eq:bigarg}
J_{2i\gamma}(T)=(2\pi^{1/2})^{-1} z^{-1/2} e^{\frac{-\pi i}{4}} \exp(\pi \gamma) e(\frac{z}{\pi}-\frac{\gamma}{\pi}\log(\frac{z-\gamma}{T})) \times \\ \left\{ 1+ \frac{1}{2i\gamma}(\frac{\gamma}{8z}-\frac{5\gamma^3}{24z^3})+\frac{1}{(2i\gamma)^2}(\frac{9\gamma^2}{128z^2}-\frac{231\gamma^4}{576z^4}+...)\right\}
\end{multline}

Incorporating this asymptotic for the appropriate range of integration, the $x,y$-integral can be be bounded up to a constant by

$$\int_0^\infty \int_0^\infty V(\frac{4\pi}{x})  W(\frac{4\pi }{y})  T(x,y)^{w}\frac{1}{w^{1/2}\sqrt{1+(T(x,y)/w)^2}} x^{s_1-1}y^{s_2-1}dxdy+\text{remainder terms}.$$ The main term is trivially bounded by $$\frac{|\frac{4\pi\sqrt{A}}{\sqrt{B}}|^{\Re(w)}}{|w|^{1/2}},$$ while the remainder terms are bounded by $$\frac{|\frac{4\pi\sqrt{A}}{\sqrt{B}}|^{\Re(w)}}{|w|^{1/2+k}}$$ for $k \in \N.$



Now we study the asymptotic for the argument $T(x,y)$ bigger than the index. For $T\geq |\gamma|,$ we use from (\cite{Wat} pg. 205) \begin{equation}
J_{2i\gamma}(T)=\frac{1}{\sqrt{2\pi T}}(W_1(2i\gamma,T)e^{iT}+W_2(2i\gamma,T)e^{-iT}),
\end{equation} where $$\frac{\partial^{(j)}}{\partial x^j}W_i(2iy,x) \ll_j (1+|x|)^{-j} \cosh(\pi y),$$ for $i=1,2,j \geq 0.$

So the $x,y$- integral, in this case, is bounded by \begin{multline}\label{eq:fff0}\int_0^\infty \int_0^\infty V(\frac{4\pi}{x})  W(\frac{4\pi }{y})  T(x,y)^{i\gamma-1/2}\left[\frac{W_1(2i\gamma,T(x,y))e^{iT(x,y)}+W_2(2i\gamma,T(x,y))e^{-iT(x,y)}}{\sinh(\pi \gamma)}\right]\times \\   x^{s_1-1}y^{s_2-1}dxdy\ll \frac{1}{\sqrt{T(x,y)}}\ll \frac{1}{|
\gamma|^{1/4}}.\end{multline}

Without loss of generality this argument works for the other Bessel functions divided by the hyperbolic functions in \eqref{eq:ddd}.

Now for the case $w=0,$ $$\lim_{w \to 0}\left[ \frac{J_w(4\pi T)-J_{-w}(4\pi T)}{2\sin(\pi w/2)} +\frac{J_w(4\pi T)+J_{-w}(4\pi T)}{2\cos(\pi w/2)} \right]=\frac{Y_0(4\pi T)}{2}+J_0(4\pi T).$$  Here we use the continuity of the index of the Bessel function (\cite{GR} 8.487) and the definition  $$Y_w(x)=\frac{J_w(x)\cos(\pi w)-J_{-w}(x)}{\sin(\pi w)}.$$ Assume $|T(x,y)|$ is larger than an absolute positive constant $C.$ Then for the second term $J_0(4\pi T)$ we can use the same analysis as is used to get \eqref{eq:fff0}. For the first term we can use the asymptotic $$Y_0(z)=\sqrt{\frac{2}{\pi z}} \sin(z-\frac{\pi}{4})+O(|z|^{-1}),$$ \cite{GR}. Similar to \eqref{eq:fff0} we can bound the $x,y$-integral up to an absolute constant by $ \frac{1}{\sqrt{T(x,y)}}\ll \frac{1}{\sqrt{C}}.$ 

If $|T(x,y)|$ is  smaller than $C,$ then we use the first term of the power series expansion for $J_0(4\pi T)$ and the asymptotic formula \cite{GR} $$Y_0(z)=\frac{2}{\pi}\left[ \ln(z/2)+\gamma\right],$$ with $\gamma$ Euler's constant. The integral in this case can be bounded by $\ln(2 \pi T) \ll T^{\epsilon}.$

If $T(x,y)=0,$ the $x,y$ integrals converge and are independent of the parameters $p,q.$




\end{proof}

 
 
 \subsection{Bounding the $s_1,s_2,w$-integrals}


Recall we are trying to bound   \begin{multline}\label{eq:res8} \frac{\psi(p)\chi(-q)}{pq}
\sum_m  (\frac{1}{2\pi i})^2 \int_{(s_1)=2} \int_{(s_2)=1/2}  \frac{(pq)^{s_1+s_2}}{p^{s_1}q^{s_2}}   (\frac{l_1l_2}{k^2})^{\frac{s_1+s_2}{2}}\times \\ \int_{(w)=\epsilon} (\frac{pq}{\dd})^w \bigg[\int_0^\infty F_{M}(t)t^{\frac{s_1+s_2-1}{2}}dt\bigg] \left[\int_{-\infty}^\infty F'_{M'}(v)v^{\frac{s_1+s_2+2w-1}{2}} d v\right]  D_{\frac{A}{B}}(s_1,s_2,w)C(s_1,s_2,w) dw ds_1ds_2\\+O((pq)^{-A}). \end{multline} By the previous section we have a bound for the functions $D_{\frac{A}{B}}(s_1,s_2,w)$ and $C(s_1,s_2,w).$ We must now ensure the $s_1,s_2,$ and $w$-integrals converge. 

Using integration by parts we have $$\int_0^\infty F'_{M'}(t)t^{\frac{s_1+s_2+2w-1}{2}}dt \ll \big(\frac{|s_1+s_2|)}{1+|w|}\big)^M$$ for any $M>0.$ We also have $$\int_0^\infty F_{M}(t)t^{\frac{s_1+s_2-1}{2}}dt \ll \frac{1}{(1+|s_1+s_2|)^N},$$ for any $N>0.$ Clearly, if we choose $N\geq M+2\geq 2$ all three of the integrals converge. 

  Remember the $m$-sum converges by the estimate \eqref{eq:mconv}. If we include the $l_1,l_2$-sums into \eqref{eq:res0}, we have 
\begin{multline}\label{eq:resl}   \frac{\psi(p)\chi(-q)}{pq}
 \sum_{l_1,l_2 \leq L} x_{l_1} x_{l_2}\sum_{k|(l_1,l_2)}  \sum_{\dd|{\frac{l_1l_2}{k^2}}} \frac{1}{{\dd}^{1/2}}
\sum_m  (\frac{1}{2\pi i})^2 \int_{(s_1)=1/2} \int_{(s_2)=1/2}  \frac{(pq)^{s_1+s_2}}{p^{s_1}q^{s_2}}   (\frac{l_1l_2}{k^2})^{\frac{s_1+s_2}{2}}\times \\ \int_{(w)=\epsilon} (\frac{pq}{\dd})^w  \bigg[\int_0^\infty F_{M}(t)t^{\frac{s_1+s_2-1}{2}}dt\bigg] \left[\int_{-\infty}^\infty F'_{M'}(v)v^{\frac{s_1+s_2+2w-1}{2}} d v\right]\times \\ D(s_1,s_2,w)C(s_1,s_2,w) dw ds_1ds_2+O((pq)^{-A}) \\ \ll \frac{p^{\theta_1+\epsilon}q^{\theta_2+\epsilon}}{\sqrt{pq}} \sum_{l_1,l_2 \leq L} x_{l_1} x_{l_2}l_1^{1/2}l_2^{1/2}\sum_{k|(l_1,l_2)} \frac{1}{k} \sum_{\dd|{\frac{l_1l_2}{k^2}}}\frac{1}{{\dd}^{1/2}}\ll \frac{p^{\theta_1+\epsilon}q^{\theta_2+\epsilon}}{\sqrt{pq}} L^{1+\epsilon}||x||_2^2   \end{multline} with $\theta_1,\theta_2 \leq 1/4.$

\section{Remainder term from \eqref{eq:yrem}}\label{remb}

In turns out in this case the ``remainder" term is bigger than the residue term with respect to $pq$ (but only by a fixed but arbitrary amount), however the analysis is essentially the same. We hope that there is a hidden symmetry similar to \cite{So} so that this ``remainder" term  cancels via some kind of functional equation and leaves the beautiful main term \eqref{eq:res0}. 

We study
\begin{multline}\label{eq:yrem00} \frac{\sqrt{pq}\psi(p)\chi(-q)}{(pq)^2}(\frac{1}{2\pi i})^3\int_{(s_1)=2} \int_{(s_2)=1/2-\delta}    \int_{(\alpha)=1/2}  \frac{\Gamma(\alpha)\Gamma(s_2-\alpha)}{\Gamma(s_2)} \frac{(pq)^{s_1+s_2}}{p^{s_1+s_2-\alpha}q^{s_2}}\\ \frac{L(\chi,\alpha)L(\psi,s_1+s_2-\alpha)}{L(\chi\psi,s_1+2s_2-\alpha)} \bigg[\sum_{e=1} \frac{\mu(e)\chi(e)}{e^{\alpha+s_1+s_2}} \sum_{d=1} \frac{1}{d^{s_1+s_2}} \sum_{a=1} \frac{\mu(a)}{a^{s_1+s_2}} \sum_{b=1}  \frac{\mu^2(b)\chi(b)}{b^{s_1+2s_2}} \\ \sum_{\substack{n \in \Z \\ n\neq 0\\ baedn \ll (pq)^\theta L^2}} \frac{\overline{\psi(n)}\widetilde{H_{baedn}}(s_1,s_2)}{n^\alpha} \sum_{r(baedn)} \frac{1}{r^{s_2-\alpha}} e(\frac{d^{\circ}mr+\frac{l_1 l_2m}{d^{\circ}k^2}\overline{r}}{baedn}) \bigg] dwds_1ds_2. \end{multline}

As in the main term we will execute Poisson summation in the $m$-sum modulo $baedn;$ the analogous interchanges of sums and integrals is identical.

\begin{equation}\label{eq:msum18}
\frac{1}{baedn}\sum_m \sum_{\substack{r(baedn)\\ \{r: \frac{l_1l_2r^2}{\dd k^2}-mr+\dd\equiv 0(baedn)\}}} \frac{1}{r^{s_2-\alpha}} \int_{-\infty}^\infty F'_{M'}(\frac{\dd v}{pq})A_{s_1,s_2}(v)e(\frac{-mv}{baedn})\frac{d v}{v^{1/2}}.\end{equation}

Applying the same steps up to \eqref{eq:res} gives  \begin{multline}\label{eq:resr} \frac{\sqrt{pq}  \psi(p)\chi(-q)}{(pq)^2}
\sum_m  (\frac{1}{2\pi i})^3 \int_{(s_1)=2} \int_{(s_2)=1/2-\delta}  (\frac{l_1l_2}{k^2})^{\frac{s_1+s_2}{2}} \int_{(\alpha)=1/2}  \frac{\Gamma(\alpha)\Gamma(s_2-\alpha)}{\Gamma(s_2)} \frac{(pq)^{s_1} (pq)^{s_2}}{p^{s_1+s_2-\alpha}q^{s_2}} \\   \frac{L(\alpha,\chi)L(s_1+s_2-\alpha,\psi)}{L(s_1+2s_2-\alpha,\chi\psi)} \int_{(w)=\epsilon}   \int_{-\infty}^\infty F'_{M'}(\frac{\dd v}{pq}) \widetilde{T_{m,v}}(w)    \bigg[ \sum_{e=1} \frac{\mu(e)\chi(e)}{e^{\alpha+s_1+2s_2+1+w}} \sum_{d=1} \frac{1}{d^{s_1+s_2+1+w}} \sum_{a=1} \frac{\mu(a)}{a^{s_1+s_2+1+w}} \\ \sum_{b=1}  \frac{\mu^2(b)\chi(b)}{b^{s_1+2s_2+1+w}} \sum_{\substack{n \in \Z\\ n \neq0}} \frac{\overline{\psi(n)} }{n^{\alpha+1+w}} \big(\sum_{\substack{r(baedn)\\ \{r: \frac{l_1l_2r^2}{\dd k^2}-mr+\dd\equiv 0(baedn)\}}} \frac{1}{r^{s_2+w-\alpha}}\big) \bigg] v^{\frac{s_1+s_2-1}{2}} d v dw ds_1ds_2+O((pq)^{-A}). \end{multline}
 We shift the $(s_1)=2$ to the line $1/2+\delta.$ There are no poles encountered in this contour shift. 
 
 The same archimedean analysis as is done in Section \ref{vil} simplifies this term to
 
\begin{multline}\label{eq:resr} \frac{\sqrt{pq}  \psi(p)\chi(-q)}{(pq)^2}
\sum_m  (\frac{1}{2\pi i})^3 \int_{(s_1)=1/2+\delta} \int_{(s_2)=1/2-\delta}  (\frac{l_1l_2}{k^2})^{\frac{s_1+s_2}{2}} \int_{(\alpha)=1/2}  \frac{\Gamma(\alpha)\Gamma(s_2-\alpha)}{\Gamma(s_2)} \frac{(pq)^{s_1} (pq)^{s_2}}{p^{s_1+s_2-\alpha}q^{s_2}} \\   \frac{L(\alpha,\chi)L(s_1+s_2-\alpha,\psi)}{L(s_1+2s_2-\alpha,\chi\psi)} \int_{(w)=\epsilon}   D(s_1,s_2,w)    \bigg[ \sum_{e=1} \frac{\mu(e)\chi(e)}{e^{\alpha+s_1+2s_2+1+w}} \sum_{d=1} \frac{1}{d^{s_1+s_2+1+w}} \sum_{a=1} \frac{\mu(a)}{a^{s_1+s_2+1+w}} \\ \sum_{b=1}  \frac{\mu^2(b)\chi(b)}{b^{s_1+2s_2+1+w}}  \sum_{\substack{n \in \Z\\ n \neq0}} \frac{\overline{\psi(n)} }{n^{\alpha+1+w}} \big(\sum_{\substack{r(baedn)\\ \{r: \frac{l_1l_2r^2}{\dd k^2}-mr+\dd\equiv 0(baedn)\}}} \frac{1}{r^{s_2+w-\alpha}}\big) \bigg] dw ds_1ds_2+O((pq)^{-A}), \end{multline} with again $D(s_1,s_2,w)$ defined as in \eqref{eq:ddd}.   

The $r$-sum has at most two terms in it following \eqref{eq:rru}, so it is bounded by $2(baedn)^{\delta}.$ The entire sum inside the brackets of \eqref{eq:resr} is absolutely convergent and is bounded (in terms of $p,q$) by $(pq)^{\epsilon}.$ Using \eqref{eq:con} gives \begin{equation}\label{eq:lfl}
\frac{L(\alpha,\chi)L(s_1+s_2-\alpha,\psi)}{L(s_1+2s_2-\alpha,\chi\psi)}\ll p^{\theta_1+\epsilon} q^{\theta_2+\epsilon}
\end{equation} with $\theta_1,\theta_2 \leq 1/4.$ The same analysis is done on $D(s_1,s_2,w)$ as in the previous section. The only difference between the analysis of this section and the previous section is the $\alpha$-integral. We can use an application of Stirling's approximation to the ratio of Gamma functions as in \cite{Y} to get  $$\frac{\Gamma(\alpha)\Gamma(s_2-\alpha)}{\Gamma(s_2)} \ll |1+t_2(1-\frac{t}{t_2})|^{-1/2}$$ where $\alpha=1/2+it$ and $s_2=1/2-\delta+it_2$ with $t,t_2$ large. A standard integration by parts argument in the $\alpha$-integral ensures that it converges. 

 So analogously to getting the bound \eqref{eq:resl}, \eqref{eq:resr} is bounded by \begin{equation}\label{eq:remrr}\frac{p^{\theta_1}q^{\theta_2+\delta}(pq)^{\epsilon}}{\sqrt{pq}} \sum_{l_1,l_2 \leq L} x_{l_1} x_{l_2}l_1^{1-\delta/2} l_2^{1-\delta/2}\sum_{k|(l_1,l_2)} \frac{1}{k^{3/2-\delta/2}} \sum_{\dd|{\frac{l_1l_2}{k^2}}}\frac{1}{{\dd}^{3/4}}\ll \frac{p^{\theta_1}q^{\theta_2+\delta}(pq)^{\epsilon}}{\sqrt{pq}} L^{1-2\delta+\epsilon}||x||_2^2 \end{equation} with $\theta_1,\theta_2 \leq 1/4.$ 
  
 
 Combining \eqref{eq:resl} and \eqref{eq:resr} we get the Kloosterman-Kloosterman term with $n\neq0$ is bounded by \begin{equation}\label{eq:finn}
\frac{p^{\theta_1+\epsilon}q^{\theta_2+\epsilon}}{\sqrt{pq}} L^{1+\epsilon}  ||x||_2^2\left( 1+ q^{\delta}\right).
\end{equation}

\section{Delta-Delta term}\label{dd}
The term from \eqref{eq:poisson} that we now study is \begin{multline}\label{eq:ddel} \sum_{l_1,l_2 \leq L} x_{l_1} x_{l_2}\sum_{q|(l_1,l_2)}  \sum_{\dd|{\frac{l_1l_2}{k^2}}} \frac{1}{{\dd}^{1/2}} \sum_{m \geq 1} \frac{1}{m^{1/2}}F'_{M'}(\frac{\dd m}{pq})  \sum_{n \geq 1} \frac{1}{n^{1/2}} F_M(\frac{n}{pq}) D_{n,\frac{l_1 l_2m}{\dd k^2}}(V)D_{n,\dd m}(W)\end{multline}
Now this term is non-zero only if $n=\frac{l_1 l_2m}{\dd k^2}$ and $n=\dd m,$ which implies $\sqrt{l_1l_2}=q\dd.$ So we can bound \eqref{eq:ddel} by  \begin{equation}\label{eq:ddel0} \sum_{l_1,l_2 \leq L} x_{l_1} x_{l_2}\sum_{k|(l_1,l_2)}   \sum_{\substack{\dd|{\frac{l_1l_2}{k^2}}\\ \sqrt{l_1l_2}=q\dd}} \frac{1}{{\dd}} \sum_{m \geq 1} \frac{1}{m}F'_{M'}(\frac{\dd m}{pq}) F_M(\frac{\dd m}{pq}). \end{equation}

Using a standard Mellin inversion we can reduce to $$\left[\int_0^\infty F'_{M'}(t)F_{M}(t)\frac{dt}{t} \right] \sum_{l_1,l_2 \leq L} \frac{x_{l_1} x_{l_2}}{(l_1l_2)^{1/2}}\sum_{k|(l_1,l_2)} k  + O((pq)^{-M}),$$ for any $M>0.$

An easy estimate for this last line is $(pq)^{\epsilon}L^{\epsilon}||x||_2^2.$

\section{Delta-Kloosterman term}\label{dk}
It is obvious the analysis for this term will suffice for the Kloosterman-delta term. 
The term is  \begin{multline}\label{eq:dkk} \sum_{l_1,l_2 \leq L} x_{l_1} x_{l_2}\sum_{k|(l_1,l_2)}  \sum_{\dd|{\frac{l_1l_2}{k^2}}} \frac{1}{{\dd}^{1/2}} \sum_{m \geq 1} \frac{1}{m^{1/2}}F'_{M'}(\frac{\dd m}{pq}) \sum_{n \geq 1} \frac{1}{n^{1/2}} F_M(\frac{n}{pq}) D_{n,\frac{l_1 l_2m}{\dd k^2}}(V)\times \\ \bigg(\sum_{c_2\equiv 0(q)} \frac{S_{\psi}(n,\dd m,c_2)}{c_2}W(\frac{4\pi \sqrt{n \dd m}}{c_2})\bigg)\end{multline}
Similar to the previous section, the only non-zero term is when $n=\frac{l_1 l_2m}{\dd k^2},$ which then reduces \eqref{eq:dkk} to  \begin{multline}\label{eq:dkk0} \sum_{l_1,l_2 \leq L} \frac{x_{l_1} x_{l_2}}{(l_1l_2)^{1/2}}\sum_{k|(l_1,l_2)} k \sum_{\dd|{\frac{l_1l_2}{k^2}}} \sum_{m \geq 1} \frac{1}{m}F'_{M'}(\frac{\dd m}{pq})  F_M(\frac{\frac{l_1 l_2m}{\dd k^2}}{pq}) \times \\ \bigg(\sum_{c_2\equiv 0(q)} \frac{S_{\psi}(\frac{l_1 l_2m}{\dd k^2},\dd m,c_2)}{c_2}W(\frac{4\pi \sqrt{ \frac{l_1l_2m^2}{k^2}}}{c_2})\bigg)\end{multline} This calculation is very similar to \cite{R}. 

We isolate the $m$-sum  to get $$\sum_m \frac{1}{m} F'_{M'}(\frac{\dd m}{pq})  F_M(\frac{\frac{l_1 l_2m}{\dd k^2}}{pq}) \sum_{y(c_2)^{*}}   \psi(y) e(\frac{\frac{l_1 l_2my}{\dd k^2}+\dd m \overline{y}}{c_2})W(\frac{4\pi \sqrt{ \frac{l_1l_2m^2}{k^2}}}{c_2}).$$ This is very similar to \eqref{eq:msum8} above. Performing Poisson summation on the $m$-sum modulo $c_2,$ and including the $c_2$-sum we get \begin{equation}\label{eq:dkp}
\sum_{c_2\equiv 0 (q)} \frac{1}{c_2} \sum_m \sum_{\substack{y(c_2)^{*}\\ \frac{l_1l_2}{\dd k^2}y^2-my+\dd\equiv 0(c_2)}} \psi(y) \int_{-\infty}^\infty F'_{M'}(\frac{\dd t}{pq})  F_M(\frac{\frac{l_1 l_2 t}{\dd k^2}}{pq})W(\frac{4\pi \frac{t}{k}\sqrt{ l_1l_2}}{c_2})e(\frac{-mt}{c_2})\frac{dt}{t}.
\end{equation}

Let us change variables $t \to \frac{kt}{\sqrt{l_1l_2}},$ giving \begin{equation}\label{eq:dkp00}
\sum_{c_2\equiv 0 (q)} \frac{1}{c_2} \sum_m \sum_{\substack{y(c_2)^{*}\\ \frac{l_1l_2}{\dd k^2}y^2-my+\dd\equiv 0(c_2)}} \psi(y) \int_{-\infty}^\infty F'_{M'}(\frac{\frac{\dd kt}{\sqrt{l_1l_2}} }{pq})  F_M(\frac{\frac{\sqrt{l_1 l_2} t}{\dd k}}{pq})W(\frac{4\pi t}{c_2})e(\frac{-mqt}{\sqrt{l_1l_2}c_2})\frac{dt}{t}.
\end{equation}

 We bound the $y$-sum by 2 as in Section \ref{myoy} and define $$T_m(x):=\int_{-\infty}^\infty F'_{M'}(\frac{\dd t}{pq})  F_M(\frac{\frac{l_1 l_2 t}{\dd k^2}}{pq})W(\frac{4\pi \frac{t}{k}\sqrt{ l_1l_2}}{x})e(\frac{-mt}{x})\frac{dt}{t}.$$ Then interchanging the $m$-sum and $c_2$-sum with another application of Mellin inversion, \eqref{eq:dkp} is bounded by \begin{multline}
2 \sum_m \sum_{c_2\equiv 0 (q)} \frac{1}{c_2}T_m(c_2)=2 \sum_m (\frac{1}{2\pi i}) \int_{(\sigma)} \widetilde{T_m}(s) \frac{\zeta(1+s)}{q^{1+s}}ds=\\ 2 \sum_m \left[ \frac{\widetilde{T_m}(0)}{q} +  (\frac{1}{2\pi i}) \int_{(-\delta)} \widetilde{T_m}(s) \frac{\zeta(1+s)}{q^{1+s}}ds\right],
\end{multline}
where $\delta>0.$
The main term is $$2\sum_m \frac{1}{q} \int_0^\infty \int_{-\infty}^\infty F'_{M'}(\frac{\dd t}{pq})  F_M(\frac{\frac{l_1 l_2 t}{\dd k^2}}{pq})W(\frac{4\pi \frac{t}{k}\sqrt{ l_1l_2}}{x})e(\frac{-mt}{x})\frac{dt}{t} \frac{dx}{x}$$

As $F_M,F'_{M'},W$ are compactly supported $\sqrt{l_1l_2} \sim q\dd,$ the $t$-integral and $x$-integral is of bounded size (independent of $p,q$ due to the multiplicative Haar measures). The $m$-sum converges by an integration by parts argument. Also the $m$-sum can be truncated at length $O( pq),$ with a remainder term having a bound $O((pq)^{-M}),$ for any $M \in \N.$ Integrate by parts once in the $t$-integral to gain a term of size $\dd \sim \frac{\sqrt{l_1l_2}}{q}.$ The size of the integral is of size $\frac{pq}{\dd}$, and so bounding the $m$ trivially gets the whole main term is of size $O(p^{\epsilon}q^{\epsilon-1} ).$ The remainder term by an analogous argument gives $O(p^{\epsilon}q^{\epsilon-1} )$ also.



 Our leftover analysis is then \begin{equation}\label{eq:dka}
p^{\epsilon}q^{\epsilon-1} \sum_{l_1,l_2 \leq L} (l_1l_2)^{-1/2} x_{l_1} x_{l_2}\sum_{q|(l_1,l_2)} q\ll p^{\epsilon}q^{\epsilon-1}L^{\epsilon}||x||_2^2.
\end{equation}

Now one can apply the same argument as we did for the delta-Kloosterman term to the Kloosterman-delta term, and get analogous bound $O(q^{\epsilon}p^{\epsilon-1} ),$ and bounding the $l_1,l_2$ sum gives $q^{\epsilon}p^{\epsilon-1}L^{\epsilon}||x||_2^2.$


\section{Positivity of test functions on spectral side of trace formula}\label{isomo}

In order to go from estimating \begin{equation}\label{eq:2app00}
\sum_{f\in H(p,\chi)}\frac{1}{(f,f)} h(V,t_f)|\sum_{l \leq L} x_l \lambda_f(l)|^2 \sum_{h \in H(q,\psi) }\frac{1}{(h,h)}h(W,t_h) |L(1/2,f \times h)|^2\end{equation} to a single term \begin{equation}\label{eq:isoyo}\frac{1}{(f,f)(h,h)} h(V,t_f)h(W,t_h) |\sum_{l \leq L} x_l \lambda_f(l)|^2 |L(1/2,f \times h)|^2,\end{equation} we need careful choices of our test functions $V,W.$ We use the following proposition of \cite{Venk1}:

\begin{prop}\label{venkm2}
Given $j_0 \in \mathbb{N}, \epsilon > 0$ and an integer $N >0,$ there is a $V$ of compact support so that $h(V,t_j)=1,$ and for all $j' \neq j_0, h(V,t_{j'}) \ll \epsilon(1+|t_{j'}|)^{-N},$ and for all $k$ odd, $h(V,k) \ll \epsilon k^{-N}.$ 

Given $k_0, \epsilon >0$ and an integer $N>0,$ there is a $V$ of compact support so that $h(V,k_0)=1, h(V,k) \ll \epsilon k ^{-N}$ for $k$ odd $k\neq k_0,$ and $h(V,t) \ll (1+|t|)^{-N}$ for all $\mathbb{R}.$ 
\end{prop}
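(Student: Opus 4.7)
The plan is to reverse-engineer $V$ from a prescribed spectral profile concentrated at $t_{j_0}$ (or $k_0$), then show the resulting function can be made compactly supported on $(0,\infty)$. The map $V \mapsto h(V,\cdot)$ is (up to the split between the continuous index $t$ and the even-integer index $k$) essentially a Kontorovich--Lebedev/Bessel transform, and such transforms admit explicit inversion formulas on Schwartz-type spaces. So the natural strategy is to choose a nice target profile and invert, then control the support.

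First I would fix an even Schwartz bump $\phi$ on $\R$ with $\phi(t_{j_0})=1$ (and matching the prescribed normalization on weights $k$, either identically zero in the Maass case or concentrated near $k_0$ in the holomorphic case), with Schwartz decay in $t$ and super-polynomial decay in $k$. Using the explicit inversion of the Bessel transform — writing, schematically,
\[
V_0(x)=\int_{-\infty}^{\infty}\phi(t)\,B_{2it}(x)\,\frac{t\tanh(\pi t)}{\pi}\,dt+\sum_{k\text{ even}}(k-1)\phi(k)\,i^{-k}J_{k-1}(x),
\]
one obtains a smooth candidate whose Kuznetsov transform realizes the desired values $h(V_0,t_{j'})=\phi(t_{j'})$ and $h(V_0,k)=\phi(k)$.

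Next I would establish rapid decay of $V_0$ as $x\to 0^{+}$ and $x\to\infty$. Smoothness of $\phi$ in $t$ gives decay at infinity in $x$ by repeated integration by parts against the oscillatory Bessel kernel, while the Schwartz decay of $\phi$ in $|t|$ controls the pointwise blow-up of $B_{2it}(x)$ (which is of order $e^{\pi|t|/2}$) for small $x$, forcing $V_0(x)\to 0$ super-polynomially as $x\to 0^{+}$. Hence $V_0$ lies in a Schwartz-type class on $(0,\infty)$. Multiplying by a smooth cutoff $\eta$ equal to $1$ on a large compact set then yields a compactly supported $V=\eta V_0$, and the difference $V_0-V$ has $L^{1}$-norm (against the relevant measure $dx/x$) as small as we like; pushing this through the transform shows $h(V,t_{j'})$ differs from $\phi(t_{j'})$ by an error of size $\epsilon(1+|t_{j'}|)^{-N}$, uniformly in $j'$ and in $k$. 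After a bounded rescaling to make $h(V,t_{j_0})=1$ exactly, the required estimates follow.

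The main obstacle is the quantitative balance in the truncation step: the Bessel kernel $B_{2it}(x)$ has transitional behavior near $x\sim 2|t|$ and grows exponentially for $x\ll |t|$, so the Schwartz-type tail of $V_0$ degrades when one probes at very large spectral parameter. Handling this cleanly requires choosing the width of $\phi$ (and hence the localization scale of $V_0$ in $x$) so that the truncation error, weighted by the Bessel growth in $t$, is still bounded by $\epsilon(1+|t|)^{-N}$ for the prescribed $N$. The holomorphic case is handled by the same argument with $\phi$ chosen to concentrate on the discrete weight spectrum and zero on the continuous one, using the $J_{k-1}$ kernel in place of $B_{2it}$.
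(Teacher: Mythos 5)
The paper does not prove this statement—it cites Venkatesh's thesis \cite{Venk1} and moves on—so there is no in-paper proof to compare with, but your proposed argument has two concrete flaws. The claimed super-polynomial decay of $V_0$ as $x\to\infty$ is false: from $J_\nu(x)\sim\sqrt{2/(\pi x)}\cos(x-\nu\pi/2-\pi/4)$ with $\nu=2it$, one finds after combining $J_{\pm 2it}$ and dividing by $\sinh(\pi t)$ that $B_{2it}(x)=-\sin(x-\pi/4)\sqrt{2/(\pi x)}\,(1+O(t^2/x^2))$, so the dominant oscillation in $x$ is \emph{independent of $t$}. Integrating a Schwartz $\phi$ against this kernel therefore produces $V_0(x)\sim -c\,\sin(x-\pi/4)\,x^{-1/2}$ with $c=\frac{1}{\pi}\int\phi(t)\,t\tanh(\pi t)\,dt$ generically nonzero, and integration by parts in $t$ cannot improve this since there is no $t$-dependence in the leading phase. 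Slow decay of $V_0$ then undermines your truncation step too: a small $L^1(dx/x)$ bound on $V_0-V$ only yields $|h(V_0-V,t)|\ll\epsilon$ uniformly in $t$, with no $(1+|t|)^{-N}$ gain. Producing that gain requires applying the Bessel operator $L=(x\partial_x)^2+x^2$ (using $-4t^2h(V,t)=h(LV,t)$) repeatedly, and the $x^2$ factor makes $\|L^N(V_0(1-\eta))\|_{L^1(dx/x)}$ grow with the cutoff radius rather than shrink, precisely because $V_0$ only decays like $x^{-1/2}$.

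What rescues the underlying idea is Paley--Wiener theory: the image of $C^\infty_c(0,\infty)$ under $V\mapsto h(V,\cdot)$ is a class of entire functions of finite exponential type, and one should build the concentrator directly inside that class (for instance by taking a Fej\'er-type product or a Beurling--Selberg-style extremal function whose exponential type is below the support length). Then the Sears--Titchmarsh inversion produces a $V$ with compact support automatically, and the truncation step, which is where your proof breaks, never occurs. This spectral-side construction is, I believe, the route taken in \cite{Venk1}; the direct inversion-and-cutoff route you sketch cannot be completed without essentially reintroducing the Paley--Wiener hypothesis on $\phi$ that you omitted.
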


Using the above proposition we can choose $V,W$ to isolate spectral parameters $\{t_{f^{\circ}},t_{h^{\circ}}\}$ with $N_V,N_W$ large enough and $\epsilon=\epsilon_V=\epsilon_W$ small enough so that the sum  \begin{equation}\label{eq:2app007}
\sum_{\substack{f\in H(p,\chi)\\ t_f \neq t_{f^{\circ}}}}\frac{1}{(f,f)} h(V,t_f)|\sum_{l \leq L} x_l \lambda_f(l)|^2 \sum_{\substack{h \in H(q,\psi)\\ t_h \neq t_{h^{\circ}}}}\frac{1}{(h,h)}h(W,t_h) |L(1/2,f \times h)|^2 < \epsilon. \end{equation}

With this choice of $V,W$ \eqref{eq:isoyo} we have the inequality  \begin{multline}\label{eq:isoyo0}\frac{1}{(f^{\circ},f^{\circ})(h^{\circ},h^{\circ})}  |\sum_{l \leq L} x_l \lambda_f(l)|^2 |L(1/2,f^{\circ} \times h^{\circ})|^2\ll \\ \sum_{f\in H(p,\chi)}\frac{1}{(f,f)} h(V,t_f)|\sum_{l \leq L} x_l \lambda_f(l)|^2 \sum_{h \in H(q,\psi) }\frac{1}{(h,h)}h(W,t_h) |L(1/2,f \times h)|^2. \end{multline}

\section{Putting it all together}

Using the estimates from Sections \ref{zero}, \ref{remb}, \ref{dd}, and \ref{dk}, and letting  $\theta=\theta_1=\theta_2$ in \eqref{eq:finn} as well as \eqref{eq:zerothe}, we get (putting explicitly in the orthonormal basis built into the trace formula)
\begin{multline}\label{eq:2app}
\sum_{f\in H(p,\chi)}\frac{1}{(f,f)} h(V,t_f)|\sum_{l \leq L} x_l \lambda_f(l)|^2 \sum_{h \in H(q,\psi) }\frac{1}{(h,h)}h(W,t_h) |L(1/2,f \times h)|^2 \ll \\ ||x||_2^2\bigg( (pq)^{\epsilon}L^{\epsilon}\{\text{delta-delta term}\} \\
+p^{\epsilon}q^{\epsilon-1}L^{\epsilon}\{\text{delta-Kloosterman term}\} +q^{\epsilon}p^{\epsilon-1}L^{\epsilon}\{\text{Kloosterman-delta term}\}\\
+(pq)^{\theta-1+\epsilon} L^{\epsilon} +(pq)^{\theta -1/2 + \epsilon} (1+q^{\delta})L^{1+\epsilon}\{\text{Kloosterman-Kloosterman term}\}. \bigg)
\end{multline}

\begin{remark}

\begin{enumerate}
\item If we take $L=1,$ the trivial amplifier, we obtain the expected Lindel\"{o}f on average bound even with $\theta=1/4.$

\item Remember the ``abnormal" term $(1+q^\delta)$ in the Kloosterman Kloosterman estimate comes from a contour shift to get the ``main" term and ``remainder" term in \eqref{eq:ymr} and \eqref{eq:yrem}. The Kloosterman-Kloosterman bounds above in \eqref{eq:2app} come from these respective terms' analysis in \eqref{eq:resl} and \eqref{eq:remrr}. We expect with more refined analysis and exploiting some kind of symmetry or functional equation, similar to what is done in \cite{So}, the ``remainder" term is much smaller than the ``main" term.
\item The term $(1+q^\delta)$ being not symmetric in $p$ and $q$ is an artifact of that we are only amplifying the forms of level $p$ in \eqref{eq:2app}. If we put an amplifier on both forms $f\in H(p,\chi)$ and $h \in H(q,\psi)$ we would again expect a symmetry in the estimate.
\item Note in this paper, we do not apply any Ramanujan bounds towards the Fourier coefficients as all the analysis is done on the geometric side of the trace formula. The only crucial ingredient is the subconvexity bounds for $GL_1$ L-functions. 

\end{enumerate}
\end{remark}

As mentioned, it is implied in the trace formulas used above that the $f$- and $h$-sums are orthonormal with respect to the Petersson inner product. This inner product also depends on the level $p$ and $q.$ Using that $(f,f)\ll p (\log p)^3$ and $(h,h)\ll q (\log q)^3,$ and a choice of $V,W$ as in section \ref{isomo} to get the inequality \eqref{eq:isoyo0} we get  \begin{multline*}|\sum_{l \leq L} x_l \lambda_{f^{\circ}}(l)|^2 |L(1/2,f^{\circ} \times h^{\circ})|^2 \ll  ||x||_2^2\bigg((pq)^{\theta+\epsilon} L^{\epsilon}  +(pq)^{\theta+1/2 + \epsilon} (1+q^{\delta})L^{1+\epsilon}+ \\ (pq)^{1+\epsilon}L^{\epsilon}+p^{1+\epsilon}q^{\epsilon}L^{\epsilon}+q^{1+\epsilon}p^{\epsilon}L^{\epsilon} \bigg).\end{multline*}

We now choose the coefficients as in \cite{KMV}, 

\begin{equation}
x_l
 := \left\{ \begin{array}{ll}
        \lambda_{f^{\circ}}(l) & \text{if } l \text{ is prime} \leq L^{1/2} \\
        -1 & \text{if } l \text{ is a square of a prime} \leq L^{1/2} \\
        0 & \text{ otherwise}.\end{array} \right.  \end{equation}

Now using the fact that $$|\sum_{l \leq L} x_l \lambda_{f^{\circ}}(l)|^2 \gg(pq)^{-\epsilon}L$$ and $$||x||_2^2 \ll L^{1/2+\epsilon},$$ we have \begin{equation}\label{eq:useamp}
|L(f^{\circ} \times h^{\circ}, 1/2)| \ll (pq)^{\epsilon}\bigg((pq)^{\frac{\theta}{2}}L^{-1/4}+(1+q^{\delta})^{1/2}(pq)^{\frac{\theta}{2}+\frac{1}{4}}L^{\frac{1}{4}}+\frac{(pq)^{1/2}}{L^{1/4}}+\frac{p^{1/2}+q^{1/2}}{L^{1/4}}\bigg).
\end{equation}

We can simplify it to \begin{equation}\label{eq:useamp0}
|L(f^{\circ} \times h^{\circ}, 1/2)| \ll (pq)^{\epsilon}\bigg(\frac{(pq)^{\frac{\theta}{2}}+(pq)^{1/2}+p^{1/2}+q^{1/2}}{L^{1/4}}+(1+q^{\delta})^{1/2}(pq)^{\frac{\theta}{2}+\frac{1}{4}}L^{\frac{1}{4}}\bigg).
\end{equation}

If we choose  $L=(pq)^{\frac{1}{2}-\theta}$ then \eqref{eq:useamp} is bounded by $$(pq)^{\epsilon}\bigg((pq)^{\frac{3\theta}{2}-1/8}+(pq)^{3/8+\frac{\theta}{4}}+p^{3/8+\frac{\theta}{4}}q^{\frac{\theta}{4}-1/8}+q^{3/8+\frac{\theta}{4}}p^{\frac{\theta}{4}-1/8} +  (1+q^{\delta})(pq)^{\frac{3}{8}+\frac{\theta}{4}}\bigg).$$ 


This completes Theorem \ref{theo}.

\end{document}